\theoremstyle{plain}
\newtheorem{thm}{Theorem}[section]
\newtheorem{lmm}[thm]{Lemma}
\newtheorem{prp}[thm]{Proposition}
\newtheorem{crl}[thm]{Corollary}
\theoremstyle{definition}
\newtheorem{rmk}[thm]{Remark}
\newtheorem{exm}[thm]{Example}
\def\ens#1{{\mathchoice{\left\{ #1 \right\}}{\{ #1 \}}{\{ #1 \}}{\{ #1 \}}}}
\def\set#1#2{{\mathchoice{\left\{ #1 \middle| #2 \right\}}{\{ #1 \mid #2 \}}{\{ #1 \mid #2 \}}{\{ #1 \mid #2 \}}}}
\def\r#1{\text{\rm #1}}
\def\t#1{\text{#1}}
\def\v#1{{\mathchoice{\left| #1 \right|}{| #1 |}{| #1 |}{| #1 |}}}
\def\n#1{{\mathchoice{\left\| #1 \right\|}{\| #1 \|}{\| #1 \|}{\| #1 \|}}}
\def\dbrack#1{[ \![ #1 ]\!]}
\def\ol#1{\overline{#1}{}}
\def\tl#1{\tilde{#1}{}}
\def\ul#1{\underline{#1}{}}
\def\wh#1{\widehat{#1}{}}
\newcommand{\im}{\r{im}}
\newcommand{\bC}{\mathbb{C}}
\newcommand{\bF}{\mathbb{F}}
\newcommand{\bL}{\mathbb{L}}
\newcommand{\bN}{\mathbb{N}}
\newcommand{\bQ}{\mathbb{Q}}
\newcommand{\bR}{\mathbb{R}}
\newcommand{\bZ}{\mathbb{Z}}
\newcommand{\cA}{\mathscr{A}}
\newcommand{\cB}{\mathscr{B}}
\newcommand{\cC}{\mathscr{C}}
\newcommand{\cD}{\mathscr{D}}
\newcommand{\cF}{\mathscr{F}}
\newcommand{\cM}{\mathscr{M}}
\newcommand{\cO}{\mathscr{O}}
\newcommand{\cP}{\mathscr{P}}
\newcommand{\cT}{\mathscr{T}}
\newcommand{\cU}{\mathscr{U}}
\newcommand{\rC}{\r{C}}
\newcommand{\rH}{\r{H}}
\newcommand{\rL}{\r{L}}
\newcommand{\rM}{\r{M}}
\newcommand{\rR}{\r{R}}
\newcommand{\C}{\bC}
\newcommand{\F}{\bF}
\newcommand{\N}{\bN}
\newcommand{\Q}{\bQ}
\newcommand{\R}{\bR}
\newcommand{\Z}{\bZ}
\newcommand{\Cp}{\mathbb{C}_p}
\newcommand{\Gm}{\mathbb{G}_{\r{m}}}
\newcommand{\Qp}{\mathbb{Q}_p}
\newcommand{\Zp}{\mathbb{Z}_p}
\newcommand{\Ab}{\r{Ab}}
\newcommand{\Alg}{\r{Alg}}
\newcommand{\Ch}{\r{Ch}}
\newcommand{\cl}{\r{cl}}
\newcommand{\Desc}{\r{Desc}}
\newcommand{\Hom}{\r{Hom}}
\newcommand{\uHom}{\ul{\Hom}}
\newcommand{\id}{\r{id}}
\newcommand{\Int}{\r{Int}}
\newcommand{\Mod}{\r{Mod}}
\newcommand{\op}{\r{op}}
\newcommand{\Set}{\r{Set}}
\newcommand{\Spec}{\r{Spec}}
\newcommand{\Spf}{\r{Spf}}
\newcommand{\Tot}{\r{Tot}}
\newcommand{\Tpl}{\r{Top}}
\newcommand{\triv}{\r{triv}}
\newcommand{\Cech}{$\check{\t{C}}$ech }
\newcommand{\Gelfand}{Gel'fand}
\newcommand{\Teichmuller}{Teichm\"uller }
\newcommand{\ad}{\r{ad}}
\newcommand{\Algiso}{\Alg^{\iso}}
\newcommand{\BAb}{\Ab^{\r{A}}}
\newcommand{\CH}{\r{CH}}
\newcommand{\Chba}{\Ch^{-}}
\newcommand{\Chisoba}{\Ch^{\iso -}}
\newcommand{\CO}{\r{CO}}
\newcommand{\cTd}{\cT_{\r{d}}}
\newcommand{\cTnd}{\cT_{\r{nd}}}
\newcommand{\Der}{\r{Der}}
\newcommand{\Derba}{\Der^{-}}
\newcommand{\Derisoba}{\Der^{\iso -}}
\newcommand{\ev}{\r{ev}}
\newcommand{\hyb}{\r{hyb}}
\newcommand{\iso}{\r{iso}}
\newcommand{\Modiso}{\Mod^{\iso}}
\newcommand{\NBAb}{\Ab^{\r{nA}}}
\newcommand{\rCbd}{\rC_{\r{bd}}}
\newcommand{\rCfin}{\rC_{\r{fin}}}
\newcommand{\TDCH}{\r{TDCH}}
\newcommand{\NSet}{\r{NSet}}
\title{Derived analytic geometry for $\Z$-valued functions Part I - Topological properties}
\author{Federico Bambozzi and Tomoki Mihara}
\date{}
\begin{document}

\maketitle

\begin{abstract}
We study the Banach algebras $\rC(X, R)$ of continuous functions from a compact Hausdorff topological space X to a Banach ring $R$ whose topology is discrete. We prove that the Berkovich spectrum of $\rC(X, R)$ is homeomorphic to $\zeta(X) \times \cM(R)$, where $\zeta(X)$ is the Banaschewski compactification of $X$ and $\cM(R)$ is the Berkovich spectrum of $R$. We study how the topology of the spectrum of $\rC(X, R)$ is related to the notion of homotopy Zariski open embedding used in derived geometry. We find that the topology of $\zeta(X)$ can be easily reconstructed from the homotopy Zariski topology associated to $\rC(X, R)$. We also prove some results about the existence of Schauder bases on $\rC(X, R)$ and a generalisation of the Stone--Weierstrass Theorem, under suitable hypotheses on $X$ and $R$.
\end{abstract}

\tableofcontents

\section{Introduction}
\label{Introduction}

\vspace{0.2in}
\noindent {\large \bf Background}
\vspace{0.1in}

\noindent 
The theory of commutative $\rC^*$-algebras, i.\ e.\ algebras of continuous functions from compact Hausdorff topological spaces $X$ to $\C$, is one of the most important topics of functional analysis. The richness of this theory comes from the structure of $\C$ as a complete valued field with respect to the standard absolute value. In the authors' recent work \cite{BM21}, it has been shown that the theory of $\rC^*$-algebras has a nice description in terms of the derived analytic geometry introduced in \cite{BK17}, \cite{BB16}, \cite{BBK19} and \cite{BK20}. Therefore, it is natural to ask what happens when the continuous $\C$-valued functions on $X$ are replaced by the Banach algebra $\rC(X, R)$ of continuous functions valued on other Banach rings. Already in \cite{BM21}, the case when $R$ is a complete non-Archimedean valued field has been considered and it has been checked that from the derived analytic geometry of the Banach algebra $\rC(X,R) \cong \rC(\zeta(X),R)$, one can reconstruct the topological space $\zeta(X)$, where $\zeta(X)$ is the Banaschewski compactification of $X$.

\vspace{0.2in}
\noindent {\large \bf Main results}
\vspace{0.1in}

\noindent 
In this work, we consider a case that is at the opposite extreme of the ones considered in \cite{BM21}. We study the Banach modules $\rC(X,M)$ for a Banach module $M$ whose topology induced by the norm is discrete. We call such an $M$ a {\it Banach module isolated at $0$}. In particular, we are interested in a Banach ring isolated at $0$. The main example of such a Banach ring is $\Z_\infty = (\Z, \v{\cdot}_\infty)$, i.\ e.\ $\Z$ equipped with the Euclidean absolute value. Such rings have been already considered, for example in \cite{Nob68}, and more recently in Lecture 5 of \cite{Sch19}, where they play a key role in the study of the notion of solid modules, but they have been usually considered just as algebras without considering the canonical Banach ring structure induced by the norm(s) on $\Z$. We will argue that this additional structure is a fundamental ingredient in the investigation of the algebras $\rC(X, R)$ and that these algebras should be considered to be objects whose study belongs both to geometry and to functional analysis as $\rC^*$-algebras do. Moreover, the ring $R$ can be more complicated than the complete valued fields considered in \cite{BM21}, and this has a strong impact on the properties of the algebras $\rC(X, R)$. Indeed, in the first five sections of this paper, we explain how to generalise the results of \cite{BM21} to the case of continuous functions valued in a Banach ring isolated at $0$. The strategies of the proofs are the same as the ones in \cite{BM21}, but almost all the lemmata have to be proved and almost all the basic computations have to be done in different ways. Whereas the last two sections prove some further properties of the Banach algebras $\rC(X, R)$. This paper is the first part of a two-part work. In the second part, we will study the morphisms $\rC(X, R) \to \rC(X, S)$ induced by a morphism of Banach ring $R \to S$ (where $S$ does not necessarily need to be isolated at $0$). 

\vspace{0.1in}
We notice that when the base ring $R$ is non-Archimedean, all our results can be presented in two different contexts. One context is when only non-Archimedean Banach $R$-modules are considered, and the other is when all Banach modules are allowed. We denote such a choice by the choice of the base category $\cC = \BAb$, in the first case, and $\cC = \NBAb$, in the second case. We will explain the reasons for this notation in \S \ref{Preliminaries}. Although most of the basic results of this paper are not influenced by this choice, the results in the last two sections are strongly influenced. More extreme is the situation of the results presented in the second part of this work, where we will be interested in understanding when a homotopy epimorphism $R \to S$ of Banach rings induces a homotopy epimorphism $\rC(X, R) \to \rC(X, S)$. We will see that not only that the Archimedean projective tensor product rarely satisfies the property that the canonical morphism $\rC(X, R) \wh{\otimes}_R S \to \rC(X, S)$ is an isomorphism, but also that, unexpectedly, the most basic localisations of the Banach ring $\Z_\infty$ are not homotopy epimorphisms. This is in stark contrast with the case of non-Archimedean Banach rings where one can prove that all the localisation used in analytic geometry are homotopy epimorphisms, even when they are not defined by a Koszul-regular sequence. See \cite{BK20} for more information on this topic and in particular loc.\ cit.\ Proposition 5.8 where examples of such non-Koszul-regular homotopy epimorphisms are considered in relation to the sheafiness problem for Banach rings. So, in these situations, these two choices lead to very different outcomes.

\vspace{0.2in}
\noindent {\large \bf Structure of the paper}
\vspace{0.1in}

\noindent 
In \S \ref{Preliminaries}, we recall basic notions from the theory of Banach rings and the theory of quasi-Abelian categories. The main purpose of this section is to fix the notation used in the rest of the paper that follows the notation used in \cite{BM21}. The reader is referred to \cite{BM21} \S 1 for a more detailed exposition of this fundamental material. In \S \ref{Berkovich spectrum for functions valued in discrete Banach rings}, we define Banach rings and Banach modules isolated at $0$, provide some fundamental examples, and prove some basic properties of the algebras of continuous functions from a topological space to a Banach module isolated at $0$. It is proved in Proposition \ref{extension property} that for any topological space $X$ and any Banach module $M$ isolated at $0$, the Banach module $\rCfin(X, M)$ of continuous functions $X \to M$ with finite image is naturally isomorphic to the space $\rC(\zeta(X), M)$ of continuous functions $\zeta(X) \to M$. From this, we deduce in Theorem \ref{Gelfand duality} \Gelfand's duality between the category of totally disconnected compact Hausdorff topological spaces $X$ and the category of Banach $R$-algebras $\rC(X, R)$ for any Banach ring $R$ isolated at $0$ whose Berkovich spectrum $\cM(R)$ is connected. In \S \ref{Homotopy epimorphisms for functions valued in discrete Banach rings}, we characterise closed topological embeddings between totally disconnected compact Hausdorff topological spaces in terms of homotopy epimorphisms, where the latter is one of the fundamental notions of derived geometry. More precisely, by \Gelfand's duality over $R$, the datum of a continuous map $f \colon X \to Y$ between totally disconnected compact Hausdorff topological spaces is equivalent to the datum of a morphism of Banach $R$-algebras $f^* \colon \rC(Y, R) \to \rC(X, R)$. In Theorem \ref{embedding corresponds to homotopy epimorphism}, we show that $f$ is a closed topological embedding if and only if $f^*$ is a homotopy epimorphism. This generalises one of the main results of \cite{BM21} and shows how the geometry of $X$ can be studied using the methods of derived geometry. In \S \ref{Tate Acyclicity for functions valued in discrete Banach rings}, we generalise in Theorem \ref{Tate's acyclicity for closed immersion} Tate's Acyclicity Theorem for the morphism of $\rC(X, R)$ induced by a finite covering by closed subsets. In more detail, we prove that a finite family of closed topological embeddings into the totally disconnected compact Hausdorff topological space $X$ set-theoretically covers $X$ if and only if the associated Tate--\Cech complex (resp.\ derived Tate--\Cech complex) is strictly exact (resp.\ a zero object). We notice that, since $R$ is isolated at $0$, Banach's Open Mapping Theorem does not necessarily hold for Banach $R$-modules. Therefore, in this situation, the strict exactness of a complex of Banach $R$-modules cannot be checked by proving only the algebraic exactness. This is one of the biggest difficulties which did not occur in the classical setting. In \S \ref{Effective descent for modules over Banach algebras of functions valued in discrete Banach rings}, we prove from these results effective descent both for Banach modules over $\rC(X, R)$ and for objects of the derived category of Banach modules over $\rC(X, R)$ with respect to the covers induced by closed topological embeddings of the Banaschewski compactification of $X$. 

\vspace{0.1in}
The results discussed up to this point are parallel to the results proved in \cite{BM21} in the case when $R$ is a complete non-trivially valued field. Although the proofs in the case when $R$ is isolated at $0$ differ in many details due to the fact that many fundamental theorems of functional analysis do not hold for Banach modules over $R$, the basic strategy is similar. On the other hand, the algebras $\rC(X, R)$ have a much richer structure than the algebras $\rC(X, k)$, for already the underlying ring of $R$ itself can be a quite complicated ring instead of a field. Indeed, we prove in \S \ref{Berkovich spectrum for functions valued in discrete Banach rings} that the Berkovich spectrum of $\rC(X, R)$ is homeomorphic to $\zeta(X) \times \cM(R)$. We note that $\cM(R)$ can be a quite complicated compact Hausdorff topological space in our setting, while the Berkovich spectrum of a complete valuation field is just a point. Therefore, in the last two sections, we study further properties of the algebras $\rC(X, R)$ that do not have analogues when $R$ is a complete valued field.

\vspace{0.1in}
In \S \ref{Homological algebra for discrete Banach modules}, we prove some basic properties of the category of Banach $R$-modules isolated at $0$, which will be denoted by $\Modiso_\cC(R)$. The main result of this section, Theorem \ref{absorbing law}, gives sufficient conditions on $(M_1,M_2) \in \Modiso_\cC(R)^2$ that ensure that the canonical map $\rC(X, M_1) \wh{\otimes}_R M_2 \to \rC(X, M_1 \wh{\otimes}_R M_2)$ is an isomorphism and also counterexamples to this statement. A direct consequence of this theorem is that if $x \in \cM(R)$ is a point such that $\set{c \in R}{x(c) = 0}$ is a maximal ideal, then $\rC(X, M) \wh{\otimes}_R \kappa(x) \cong \rC(X, M \wh{\otimes}_R \kappa(x))$ for all $M \in \Modiso_\cC(R)$, where $\kappa(x)$ denotes the residue field at $x$. In \S \ref{Schauder bases and Z-forms}, we deal with the existence of $R$-linear Schauder bases for the algebras $\rC(X, R)$. We recall that it is classically known for the case where $R$ is a complete valuation field that $\rC(X,R)$ admits $R$-linear Schauder bases if $R$ is non-Archimedean. As an analogue, we show in Proposition \ref{Cfin is free} for the case where $R$ is isolated at $0$ that $\rC(X,R)$ admits $R$-linear Schauder bases if $R$ is non-Archimedean. We then discuss the existence of a particular kind of Schauder bases that we call {\it generalised van der Put bases} and the difference between them and the Mahler bases. We conclude with a generalisation of the Stone--Weierstrass Theorem in Theorem \ref{non-Archimedean Stone--Weierstrass}.

\section{Preliminaries}
\label{Preliminaries}

We follow the conventions and the terminology on Banach rings and quasi-Abelian category theory from \cite{BM21}. For example, we always assume rings to be unital and commutative, and monoid objects of a symmetric monoidal category to be commutative. Although we briefly recall the conventions and the terminology in this section, it is better for the reader to refer to \cite{BM21} \S 1 for the detailed explanations on omitted facts such as the relation between Banach ring theory and the monoidal categories $\BAb$ and $\NBAb$ of Banach Abelian groups and non-Archimedean Banach Abelian groups that we introduce below. Moreover, when we consider a functor between two additive categories we will always tacitly assume that the functor is additive unless stated otherwise. We also use the same symbol to denote a category and its class of objects. Therefore, if $\cC$ is a category, the formula $X \in \cC$ means that $X$ is an object of $\cC$. We formally set $0^{-1} \coloneqq \infty$ and $\infty \cdot 0 = 0 \cdot \infty = 0$ in order to avoid a redundant case classification when we take the inverse of the operator norm of a bounded homomorphism which might be $0$.

\vspace{0.1in}
A {\it normed set} is a set $S$ equipped with a map $\v{\cdot}_{S} \colon S \to [0,\infty)$. For normed sets $S_0$ and $S_1$, we denote by $S_0 \odot S_1$ the normed set given as the set $S_0 \times S_1$ equipped with the map $\v{\cdot}_{S_0 \odot S_1} \colon S_0 \times S_1 \to [0,\infty)$ assigning $\v{s_0}_{S_0} \v{s_1}_{S_1}$ to each $(s_0,s_1) \in S_0 \times S_1$. For normed sets $S_0$ and $S_1$, a map $\phi \colon S_0 \to S_1$ is said to be {\it bounded} if its operator norm $\sup \set{\v{s}_{S_0}^{-1} \v{\phi(s)}_{S_1}}{s \in S_0} \in [0,\infty]$ is finite. We denote by $\NSet$ the category of normed sets and bounded maps, which naturally forms a symmetric monoidal category with respect to $\odot$.

\vspace{0.1in}
A {\it complete norm} on an Abelian group $M$ is a map $\n{\cdot} \colon M \to [0,\infty)$ such that the map $M^2 \to [0,\infty), \ (m_0,m_1) \mapsto \n{m_0 - m_1}$ is a complete metric on $M$. A {\it Banach Abelian group} is an Abelian group $M$ equipped with a complete norm $\n{\cdot}_M$ on it. A Banach Abelian group is said to be {\it non-Archimedean} if it is an ultrametric space with respect to the metric induced by the norm. We denote by $\BAb$ the category of Banach Abelian groups and bounded group homomorphisms, and by $\NBAb \subset \BAb$ the full subcategory of non-Archimedean Banach Abelian groups.

\vspace{0.1in}
Let $\cC$ denote $\BAb$ (resp.\ $\NBAb$). We equip $\cC$ with the symmetric monoidal structures given by the completed tensor product assigning to each $(M_0,M_1) \in \cC^2$ the completion $M_0 \wh{\otimes} M_1$ of $M_0 \otimes_{\Z} M_1$ with respect to the metric associated to the tensor seminorm $M_0 \otimes_{\Z} M_1 \to [0,\infty)$ assigning to each $m \in M_0 \otimes_{\Z} M_1$ the infimum of $\sum_{j=0}^{n} \n{(m_{0,j},m_{1,j})}_{M_0 \odot M_1}$ (resp.\ $\max_{j=0}^{n} \n{(m_{0,j},m_{1,j})}_{M_0 \odot M_1}$) for $(m_{0,j},m_{1,j})_{j=0}^{n} \in (M_0 \odot M_1)^{n+1}$ with $n \in \N$ and $\sum_{j=0}^{n} m_{0,j} \otimes m_{1,j} = m$.

\vspace{0.1in}
Let $R$ be a monoid object of $\cC$. We denote by $\Mod_{\cC}(R)$ the quasi-Abelian category of $R$-module objects of $\cC$ and $R$-linear bounded homomorphisms in $\cC$ (cf.\ \cite{BB16} Proposition 3.15 and \cite{BB16} Proposition 3.18). We equip $\Mod_{\cC}(R)$ with the closed symmetric monoidal structure $\wh{\otimes}_R$ induced by $\otimes$ and the internal hom functor $\uHom_{\Mod_{\cC}(R)}$ given by $\Hom_{\Mod_{\cC}(R)}$ equipped with the pointwise operations and the operator norm. The reader should be careful that the unitor $R \wh{\otimes}_R M \stackrel{\cong}{\to} M$ is not necessarily an isometry for an $M \in \Mod_{\cC}(R)$, because the scalar multiplication $R \wh{\otimes} M \to M$ is not necessarily contracting. We denote by $\Alg_{\cC}(R)$ the category of monoid objects of $\Mod_{\cC}(R)$.

\vspace{0.1in}
For an $M \in \Mod_{\cC}(R)$, we say that $M$ is {\it projective} if the external hom functor $\Hom_{\Mod_{\cC}(R)}(M,\cdot) \colon \Mod_{\cC}(R) \to \Ab$ is exact, or equivalently by the left exactness of $\Hom_{\Mod_{\cC}(R)}(M,\cdot)$, sends any strict epimorphism to a surjective map (cf.\ \cite{Sch99} Definition 1.3.18), and is {\it strongly flat} if the functor $(\cdot) \wh{\otimes}_R M \colon \Mod_{\cC}(R) \to \Mod_{\cC}(R)$ is strongly exact, or equivalently by the strong right exactness of $(\cdot) \wh{\otimes}_R M$, preserves the kernel of any morphism (cf.\ \cite{BK20} Definition 3.2).

\section{Berkovich spectrum for functions valued in discrete Banach rings}
\label{Berkovich spectrum for functions valued in discrete Banach rings}

In this section, we introduce Banach rings and Banach modules isolated at $0$, and we study the basic properties of the modules of continuous functions from a topological space to such modules. We prove in Proposition \ref{extension property} that for any topological space $X$, any Banach ring $R$, and any Banach $R$-module $M$ isolated at $0$, the Banach $R$-module $\rCfin(X, M)$ of continuous functions $X \to M$ with finite image is canonically isometrically isomorphic to the Banach $R$-module $\rC(\zeta(X), M)$ of continuous functions $\zeta(X) \to M$, where $\zeta(X)$ denotes the Banaschewski compactification of $X$. Then, we deduce in Theorem \ref{Banaschweski compatification} that the Berkovich spectrum of $\rC(\zeta(X), M)$ is homeomorphic to $\zeta(X) \times \cM(R)$, where $\cM(R)$ denotes the Berkovich spectrum of $R$. We conclude this section by proving \Gelfand's duality between the category of totally disconnected compact Hausdorff topological spaces $X$ and the category of Banach $R$-algebras $\rC(X, R)$ in Theorem \ref{Gelfand duality}, under the hypothesis that $\cM(R)$ is connected.

\vspace{0.1in}
Let $\cC$ be either $\BAb$ or $\NBAb$, $R$ a monoid object of $\cC$, and $M \in \Mod_{\cC}(R)$. We say that $M$ is {\it isolated at $0$} if $\ens{0} \subset M$ is an open subset for the topology induced by the norm. We say that $R$ is {\it isolated at $0$} if so is the regular $R$-module object. Although we focus on the case where $M$ is the regular $R$-module object in \S \ref{Homotopy epimorphisms for functions valued in discrete Banach rings} -- \S \ref{Effective descent for modules over Banach algebras of functions valued in discrete Banach rings}, we will deal with the general case in \S \ref{Homological algebra for discrete Banach modules} and \S \ref{Schauder bases and Z-forms}.

\begin{exm}
\label{example of Banach ring isolated at 0}
\begin{itemize}
\item[(i)] Let $O \subset K$ be the ring of algebraic integers of a number field $K$. We denote by $\v{\cdot}_{\infty} \colon O \to [0,\infty)$ the norm given by the maximum of the restrictions of the norm of $(\C, \v{\cdot}_\infty)$ to $K$ via all infinite places $K \to \C$. For any $\epsilon \in [0,1]$, $(O,\v{\cdot}_{\infty}^{\epsilon})$ is a monoid of $\BAb$ isolated at $0$. In particular, $\Z_{\infty} \coloneqq (\Z,\v{\cdot}_{\infty})$ is a monoid of $\BAb$ isolated at $0$.
\item[(ii)] Let $K$ be a number field. We denote by $\v{\cdot}_{\ad} \colon K \to [0,\infty)$ the norm given by the supremum of the restrictions of the absolute values corresponding to all the places on $K$, where the absolute values $\v{\cdot}_v$ corresponding to finite places $v$ over a prime number $p$ are normalised by $\v{p}_v = p^{-1}$. For any $\epsilon \in [0,1]$, $(K,\v{\cdot}_{\ad}^{\epsilon})$ is a monoid of $\BAb$ isolated at $0$. In particular, $(\Q,\v{\cdot}_{\ad})$ is a monoid of $\BAb$ isolated at $0$. 
\item[(iii)] Let $A$ be a ring, and $N$ be an $A$-module. We denote by $\n{\cdot}_{\triv}$ the trivial norms on $A$ and $N$, which assign $1$ to each non-zero element and $0$ to the zero elements. Then, $A_{\triv} \coloneqq (A,\n{\cdot}_{\triv})$ is a monoid of $\NBAb$ isolated at $0$, and $N_{\triv} \coloneqq (N,\n{\cdot}_{\triv})$ is an object of $\Mod_{\NBAb}(A_{\triv})$ isolated at $0$. In particular, $\Z_{\triv}$ is a monoid of $\NBAb$ isolated at $0$. 
\item[(iv)] Let $A$ be a monoid of $\cC$. We denote by $\n{\cdot}_{\hyb}$ the hybrid norm on $A$, which assigns $\max \ens{\n{a}_A,\n{a}_{\triv}}$ to each $a \in A$. Then, $A_{\hyb} \coloneqq (A,\n{\cdot}_{\hyb})$ is a monoid of $\cC$ isolated at $0$. In particular, $\C_{\hyb}$ is a monoid of $\BAb$ isolated at $0$.
\end{itemize}
\end{exm}

We study elementary functional analysis on Banach modules isolated at $0$.

\begin{prp}
\label{isolated at 0 implies discrete}
The topology of $M$ is discrete if and only if $M$ is isolated at $0$.
\end{prp}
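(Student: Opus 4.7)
The plan is to exploit the fact that a Banach module is, in particular, a topological Abelian group under addition, so translations are homeomorphisms. The equivalence then reduces to the standard principle that a topological group is discrete as soon as one of its points is isolated.

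First I would handle the trivial direction. If the topology of $M$ is discrete, then every singleton, and in particular $\ens{0}$, is open, so $M$ is isolated at $0$ by definition.

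For the converse, assume $M$ is isolated at $0$, so that $\ens{0}$ is an open neighbourhood of $0$ in the norm topology. For each $m \in M$, consider the translation map $\tau_m \colon M \to M$ defined by $\tau_m(x) = x + m$. Since the norm on $M$ is translation invariant, $\n{\tau_m(x_0) - \tau_m(x_1)}_M = \n{x_0 - x_1}_M$, so $\tau_m$ is an isometry and hence a homeomorphism with inverse $\tau_{-m}$. Applying $\tau_m$ to the open set $\ens{0}$ shows that $\ens{m} = \tau_m(\ens{0})$ is open in $M$. As $m$ was arbitrary, every singleton of $M$ is open, so the topology is discrete.

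The argument is essentially a tautology once one notices that the norm topology on $M$ makes addition continuous and translation invariant; there is no genuine obstacle. The only thing worth flagging is that we rely on the fact that the norm defining the Banach module structure really does define an honest (translation-invariant) metric on the underlying additive group of $M$, which is part of the definition of a Banach Abelian group recalled in \S \ref{Preliminaries}.
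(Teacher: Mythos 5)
Your proof is correct and is essentially the paper's own argument: the paper likewise notes that $\ens{0}=\set{m\in M}{\n{m}_M<\epsilon}$ for some $\epsilon>0$ and then writes $\ens{m}=\set{m'\in M}{\n{m'-m}_M<\epsilon}$, which is exactly the translation-invariance of the metric that you package as the homeomorphism $\tau_m$. No gaps; your remark that the norm induces a translation-invariant metric is indeed the only point that needs to be (and is) part of the definition in \S\ref{Preliminaries}.
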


\begin{proof}
If the topology of $M$ is discrete, then every subset of $M$ (and hence $\ens{0} \subset M$) is open. Suppose that $M$ is isolated at $0$. Since $\ens{0} \subset M$ is open, there exists an $\epsilon \in (0,\infty)$ such that $\ens{0} = \set{m \in M}{\n{m}_M < \epsilon}$. For any $m \in M$, we have $\ens{m} = \set{m+m'}{m' \in M \land \n{m'} < \epsilon} = \set{m' \in M}{\n{m'-m}_M < \epsilon}$, and hence $\ens{m}$ is an open subset of $M$.
\end{proof}

Throughout this paper, let $X$ denote a topological space. We denote by $\rC(X,M)$ the $R$-module of continuous maps $X \to M$, by $\rCbd(X,M) \subset \rC(X,M)$ the $R$-submodule of continuous maps whose images are bounded, and by $\rCfin(X,M) \subset \rCbd(X,M)$ the $R$-submodule of continuous maps whose images are finite sets. Then, $\rCbd(X,M)$ forms an object of $\Mod_{\cC}(R)$ with respect to the supremum norm, while $\rCfin(X,M)$ is not necessarily closed in $\rCbd(X,M)$. When $X$ is compact, then $\rCbd(X,M)$ coincides with $\rC(X,M)$, and hence we also regard $\rC(X,M)$ as an object of $\Mod_{\cC}(R)$. Applying this construction to the regular $R$-module object, we obtain $R$-algebras $\rCfin(X,R) \subset \rCbd(X,R) \subset \rC(X,R)$. Then, $\rCfin(X,M)$ forms a $\rCfin(X,R)$-module, and $\rCbd(X,R)$ forms an object of $\Alg_{\cC}(R)$.

\begin{prp}
\label{Cfin is closed}
\begin{itemize}
\item[(i)] If $M$ is isolated at $0$, then every $R$-submodule of $\rCbd(X,M)$ is closed, and is isolated at $0$ as an object of $\Mod_{\cC}(R)$.
\item[(ii)] If $X$ is compact and $M$ is isolated at $0$, then $\rCfin(X,M)$ coincides with $\rC(X,M)$.
\end{itemize}
\end{prp}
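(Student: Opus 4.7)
Both parts are short consequences of the discreteness criterion (Proposition \ref{isolated at 0 implies discrete}) combined with the fact that the supremum norm inherits the ``gap at $0$'' from the pointwise norm.

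For part (i), my first step is to unpack what ``isolated at $0$'' means metrically. By Proposition \ref{isolated at 0 implies discrete} the topology of $M$ is discrete, so there exists $\epsilon \in (0,\infty)$ with $\set{m \in M}{\n{m}_M < \epsilon} = \ens{0}$; equivalently, every non-zero element of $M$ has norm at least $\epsilon$. I then transfer this gap to $\rCbd(X,M)$: given $f \in \rCbd(X,M)$ with $\n{f}_{\sup} < \epsilon$, for every $x \in X$ one has $\n{f(x)}_M < \epsilon$, hence $f(x) = 0$, hence $f = 0$. Thus the open ball of radius $\epsilon$ about $0$ in $\rCbd(X,M)$ contains only $0$, so $\rCbd(X,M)$ is isolated at $0$ and, by Proposition \ref{isolated at 0 implies discrete} applied once more, its underlying metric topology is discrete.

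Next I deduce the two claims about an arbitrary $R$-submodule $N \subset \rCbd(X,M)$. Since $\rCbd(X,M)$ is discrete, every subset is both open and closed, so in particular $N$ is closed in $\rCbd(X,M)$. The restriction of the supremum norm to $N$ still has the gap: $\set{n \in N}{\n{n}_{\sup} < \epsilon} = N \cap \ens{0} = \ens{0}$, so $N$ is isolated at $0$. To view $N$ as an object of $\Mod_{\cC}(R)$ I only need completeness, which is automatic because a closed subspace of the Banach $R$-module $\rCbd(X,M)$ is itself a Banach $R$-module.

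For part (ii), once $X$ is compact and $M$ is isolated at $0$, any $f \in \rC(X,M) = \rCbd(X,M)$ has image $f(X)$ compact in $M$; but $M$ is discrete by Proposition \ref{isolated at 0 implies discrete}, and compact discrete sets are finite, so $f(X)$ is finite and $f \in \rCfin(X,M)$. The reverse inclusion $\rCfin(X,M) \subset \rC(X,M)$ is trivial.

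I do not anticipate a genuine obstacle here; the whole argument is a short reduction to the discreteness criterion already proved. The only point worth treating carefully, rather than as an afterthought, is the verification that the gap $\epsilon$ on $M$ really propagates to the supremum norm, since this is what makes both (i) and (ii) work.
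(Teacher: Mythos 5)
Your proof is correct and takes essentially the same route as the paper: the paper's one-line argument (Cauchy sequences in $\rCbd(X,M)$ are eventually constant because $M$ is isolated at $0$) rests on exactly the gap-transfer to the supremum norm that you spell out, and part (ii) is the same compact-discrete-implies-finite observation. No gaps.
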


\begin{proof}
The assertion (i) follows from the fact that every Cauchy sequence in $\rCbd(X,M)$ is eventually constant because $M$ is isolated at $0$. The assertion (ii) immediately follows from Proposition \ref{isolated at 0 implies discrete}.
\end{proof}

We denote by $\CO(X)$ the set of clopen subsets of $X$. For each $U \in \CO(X)$, we denote by $1_{X,R,U} \in \rCfin(X,R)$ the characteristic function of $U$. For each $(f,m) \in \rCfin(X,R) \times M$, we denote by $fm \in \rCfin(X,M)$ the scalar multiplication of $f$ and the constant map $X \to M$ with value $m$. We show a finite image analogue of \cite{Mih14} Theorem 4.12.

\begin{prp}
\label{idempotent generates C(X,R)}
\begin{itemize}
\item[(i)] The set $\set{1_{X,R,U} m}{(U,m) \in \CO(X) \times M}$ generates the underlying $R$-module of $\rCfin(X,M)$.
\item[(ii)] For any $f \in \rCfin(X,M)$, there exists a pair $(\cU,e)$ of a finite subset $\cU \subset \CO(X) \setminus \ens{\emptyset}$ and a map $e \colon \cU \to M$ such that $\cU$ is a disjoint cover of $X$ and $\sum_{U \in \cU} 1_{X,R,U} e(U) = f$.
\end{itemize}
\end{prp}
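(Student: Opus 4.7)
My plan is to observe that (ii) is strictly stronger than (i), since a decomposition of $f$ of the form $\sum_{U \in \cU} 1_{X,R,U} e(U)$ already exhibits $f$ as a finite sum (hence a finite $R$-linear combination, as we can use the coefficient $1 \in R$) of elements of the form $1_{X,R,U} m$. So the real content is (ii), and I will focus on constructing, from a given $f \in \rCfin(X,M)$, the partition $\cU$ and the labeling $e$ directly from the fibres of $f$.

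The construction is as follows. If $X = \emptyset$ there is nothing to prove, taking $\cU = \emptyset$. Otherwise, since $f \in \rCfin(X,M)$, the image $f(X)$ is a nonempty finite set, say $\ens{m_1, \ldots, m_n}$ with the $m_i$ distinct. Set $U_i \coloneqq f^{-1}(\ens{m_i})$ for $i = 1, \ldots, n$, so that the $U_i$ are pairwise disjoint and cover $X$, and each $U_i$ is nonempty since $m_i \in f(X)$. The key point is that each $U_i$ lies in $\CO(X)$: since $M$ is a metric space (hence Hausdorff), every singleton $\ens{m_i} \subset M$ is closed, so the preimage $U_i$ is closed by continuity of $f$; and since $f(X)$ is finite, $U_i = X \setminus \bigcup_{j \neq i} U_j$ is the complement of a finite union of closed subsets, hence open. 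Taking $\cU \coloneqq \ens{U_1, \ldots, U_n} \subset \CO(X) \setminus \ens{\emptyset}$ and $e \colon \cU \to M$, $U_i \mapsto m_i$, a pointwise evaluation check shows
\[
\sum_{U \in \cU} 1_{X,R,U} e(U) = \sum_{i=1}^{n} 1_{X,R,U_i} m_i = f,
\]
since for any $x \in X$ there is a unique $i_0$ with $x \in U_{i_0}$, and the right-hand side evaluates to $m_{i_0} = f(x)$. Note that we do not need any assumption that $M$ is isolated at $0$; only the finiteness of $f(X)$ and the Hausdorff property of $M$ are used.

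There is essentially no obstacle in this argument. The only minor point that needs care is justifying the openness of each $U_i$ in $X$ without invoking that $\ens{m_i}$ is open in $M$ (which would require $M$ to be isolated at $0$); this is handled by the finite-complement trick described above, using only that singletons are closed in the Hausdorff space $M$.
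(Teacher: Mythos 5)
Your proof is correct and follows essentially the same route as the paper: decompose $X$ into the fibres $f^{-1}(\ens{m})$ for $m \in f(X)$, which are clopen by the finiteness of $f(X)$, and read off $e$ from the constant values; (i) then follows from (ii). Your explicit justification that the fibres are clopen via the finite-complement trick (using only that $M$ is Hausdorff, not isolated at $0$) fills in a step the paper leaves implicit, and correctly so, since the proposition is stated for arbitrary $M \in \Mod_{\cC}(R)$.
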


\begin{proof}
The assertion (i) follows from assertion (ii). We show assertion (ii). By the finiteness of $f(X)$, $U_m \coloneqq f^{-1}(\ens{m})$ is a clopen subset of $X$ for any $m \in M$. Put $\cU \coloneqq \set{U_m}{m \in f(X)} \subset \CO(X) \setminus \ens{\emptyset}$. We denote by $e$ the map $\cU \to M$ assigning to each $U \in \cU$ the unique element of the singleton $f(U) \subset f(X)$. Then, $(\cU,e)$ satisfies the desired property.
\end{proof}

Let $X_0$ be a subset of $X$. We put $I_{X,R,X_0} \coloneqq \set{f \in \rCfin(X,R)}{f |_{X_0} = 0}$. We show a finite image analogue of \cite{BM21} Lemma 2.21.

\begin{prp}
\label{I is ind-projective}
Every finitely generated $\rCfin(X,R)$-submodule of $I_{X,R,X_0}$ is contained in a principal ideal generated by $1_{X,R,U}$ for some $U \in \CO(X)$ with $X_0 \cap U = \emptyset$.
\end{prp}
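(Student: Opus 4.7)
The plan is to apply Proposition~\ref{idempotent generates C(X,R)}(ii) to each of the finitely many generators of the submodule, refine the resulting partitions to a common one, and then take $U$ to be the union of those partition pieces that avoid $X_0$.

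More concretely, fix generators $f_1, \ldots, f_n \in I_{X,R,X_0}$ of the given finitely generated $\rCfin(X,R)$-submodule. By Proposition~\ref{idempotent generates C(X,R)}(ii) applied to each $f_i$, there exist a finite subset $\cU_i \subset \CO(X) \setminus \ens{\emptyset}$ which is a disjoint cover of $X$ and a map $e_i \colon \cU_i \to R$ such that $f_i = \sum_{V \in \cU_i} 1_{X,R,V} e_i(V)$. Next, form the common refinement
\[
\cU \coloneqq \set{V_1 \cap \cdots \cap V_n}{(V_1, \ldots, V_n) \in \cU_1 \times \cdots \times \cU_n} \setminus \ens{\emptyset},
\]
which is again a finite disjoint clopen cover of $X$. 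Each $f_i$ can be rewritten as $f_i = \sum_{V \in \cU} 1_{X,R,V} c_{i,V}$ with $c_{i,V} \in R$ (namely $c_{i,V} = e_i(V_i)$ for the unique $V_i \in \cU_i$ containing $V$), and in particular $f_i$ is constant with value $c_{i,V}$ on each $V \in \cU$.

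The key step is then to exploit the hypothesis $f_i|_{X_0} = 0$: for every $V \in \cU$ with $V \cap X_0 \neq \emptyset$, picking a point $x \in V \cap X_0$ gives $c_{i,V} = f_i(x) = 0$. Hence if I set
\[
U \coloneqq \bigcup \set{V \in \cU}{V \cap X_0 = \emptyset},
\]
then $U$ is a finite union of clopen sets and therefore $U \in \CO(X)$, and by construction $U \cap X_0 = \emptyset$. Moreover each $f_i$ vanishes on the complement of $U$, so $1_{X,R,U} \cdot f_i = f_i$ in $\rCfin(X,R)$. This shows $f_i \in (1_{X,R,U})$ for every $i$, and consequently the submodule generated by $f_1, \ldots, f_n$ is contained in the principal ideal $(1_{X,R,U})$, as desired.

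The argument is essentially combinatorial, and I do not expect a genuine obstacle: the only point deserving care is that $R$ is not assumed to be a field, so one cannot argue by inverting coefficients; but this is irrelevant because the vanishing $c_{i,V} = 0$ on pieces meeting $X_0$ is forced directly by evaluating $f_i$ at a point of $X_0$, regardless of the ring structure of $R$. The finiteness of $\cU$ (and hence of the defining union of $U$) is what guarantees that $U$ is clopen, and this is automatic from the finiteness of the images of $f_1, \ldots, f_n$.
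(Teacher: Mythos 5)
Your proof is correct and follows essentially the same route as the paper's: the key point in both is that the finiteness of the images makes the relevant supports clopen and hence that a single clopen $U$ disjoint from $X_0$ absorbs all generators. The paper first reduces to a single generator via the directedness of $\set{U \in \CO(X)}{X_0 \cap U = \emptyset}$ and then takes $U = f^{-1}(R \setminus \ens{0})$ directly, whereas you handle all generators simultaneously through a common refinement of partitions and take the (possibly larger) union of the pieces avoiding $X_0$; this is only a cosmetic difference.
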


\begin{proof}
Since $\set{U \in \CO(X)}{X_0 \cap U = \emptyset}$ is directed by inclusions, it is reduced to the case of an ideal generated by a single element. Therefore it suffices to show that for any $f \in I_{X,R,X_0}$, there exists a $U \in \CO(X)$ such that $1_{X,R,U} f = f$ and $X_0 \cap U = \emptyset$. Put $U \coloneqq f^{-1}(R \setminus \ens{0}) \subset X$. We have $X_0 \cap U = \emptyset$ by $f |_{X_0} = 0$. By the finiteness of $f(X)$, we have $U \in \CO(X)$. For any $x \in U$, we have $(1_{X,R,U} f)(x) = 1_{X,R,U}(x) f(x) = 1 \cdot f(x) = f(x)$. For any $x \in X \setminus U$, we have $(1_{X,R,U} f)(x) = 1_{X,R,U}(x) f(x) = 0 = f(x)$. Therefore, we obtain $1_{X,R,U} f = f$.
\end{proof}

We abbreviate $(\cC,R)$ to $\cD_2$. We denote by $\Tpl$ the category of topological spaces and continuous maps, by $\CH \subset \Tpl$ the full subcategory of compact Hausdorff topological spaces, by $\TDCH \subset \CH$ the full subcategory of totally disconnected compact Hausdorff topological spaces, by $F_{\TDCH}^{\Tpl}$ the inclusion functor $\TDCH \hookrightarrow \Tpl$, by $\zeta$ the Banaschewski compactification functor $\Tpl \to \TDCH$ (cf.\ \cite{Mih14} Theorem 1.3), which is left adjoint to $F_{\TDCH}^{\Tpl}$, by $\iota_{\zeta,\bullet}$ the unit adjunction $\id_{\Tpl} \Rightarrow F_{\TDCH}^{\Tpl} \circ \zeta$, and by $\cM_{\cD_2} \colon \Alg_{\cC}(R) \to \CH$ the Berkovich spectrum functor (cf.\ \cite{Ber90} 1.2.1 Theorem). We recall that $\zeta(X)$ is explicitly described by using ultrafilters, and can be described as $\cM_{\cD_2}$ for the case where $R$ is a complete valued field by \cite{Mih14} Corollary 2.3. The correspondence assigning $\rCfin(X,R)$ to each $X \in \Tpl$ gives a functor $\Gamma_{\cD_2} \colon \Tpl^{\op} \to \Alg_{\cC}(R)$ by Proposition \ref{Cfin is closed}. We show an isolated analogue of \cite{BM21} Lemma 3.7 (ii).

\begin{prp}
\label{extension property}
If $M$ is isolated at $0$, then the pre-composition $\rC(\zeta(X),M) \to \rCfin(X,M)$ by $\iota_{\zeta,X} \colon X \to \zeta(X)$ is an isometric $R$-linear isomorphism.
\end{prp}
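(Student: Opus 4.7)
The plan is to construct a two-sided inverse to the pre-composition map and then verify the $R$-linearity and the isometry separately. First I would check that the map is well-defined: since $\zeta(X)$ is compact and $M$ is discrete by Proposition \ref{isolated at 0 implies discrete}, every continuous $g \colon \zeta(X) \to M$ has finite image, so $g \circ \iota_{\zeta,X}$ has finite image as well and lies in $\rCfin(X,M)$. $R$-linearity is immediate from the pointwise $R$-module structures. To construct the inverse, given $f \in \rCfin(X,M)$ I would regard $f(X)$ as a finite discrete subspace of $M$, which automatically lies in $\TDCH$, and view $f$ as a morphism $X \to f(X)$ in $\Tpl$. The adjunction $\zeta \dashv F_{\TDCH}^{\Tpl}$ then yields a unique continuous $\tl{f} \colon \zeta(X) \to f(X)$ with $\tl{f} \circ \iota_{\zeta,X} = f$; composing with $f(X) \hookrightarrow M$ produces the desired continuous extension $\zeta(X) \to M$.

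The pivotal step is to upgrade this extension property to the full injectivity of the pre-composition, for which I would prove that $\iota_{\zeta,X}(X)$ is dense in $\zeta(X)$. Let $Y$ denote its closure. As a closed subspace of an object of $\TDCH$ it lies in $\TDCH$, and $\iota_{\zeta,X}$ factors through $Y$; applying the universal property of $\zeta$ to this factored map yields a continuous $\phi \colon \zeta(X) \to Y$, and the composition with the inclusion $Y \hookrightarrow \zeta(X)$ is a continuous endomorphism of $\zeta(X)$ that agrees with $\iota_{\zeta,X}$ after pre-composition with $\iota_{\zeta,X}$. By the uniqueness of the continuous extension of $\iota_{\zeta,X}$ to $\zeta(X)$, this endomorphism must be $\id_{\zeta(X)}$, forcing $Y = \zeta(X)$. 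Granted denseness, any two continuous maps $\zeta(X) \to M$ coinciding on $\iota_{\zeta,X}(X)$ coincide everywhere, because the diagonal of $M$ is closed by Hausdorffness and so its pullback along $(g_1,g_2) \colon \zeta(X) \to M \times M$ is a closed set containing the dense set $\iota_{\zeta,X}(X)$.

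For the isometry, given $g \in \rC(\zeta(X),M)$ I would use that the image $g(\zeta(X))$ is finite and each nonempty fiber $g^{-1}(\ens{m})$ is a nonempty clopen subset of $\zeta(X)$; denseness of $\iota_{\zeta,X}(X)$ makes each such fiber meet $\iota_{\zeta,X}(X)$, so $g(\iota_{\zeta,X}(X)) = g(\zeta(X))$ and the supremum norms of $g \circ \iota_{\zeta,X}$ and $g$ coincide. The main obstacle I anticipate is precisely the denseness of $\iota_{\zeta,X}(X)$ in $\zeta(X)$, since the Banaschewski compactification is defined for arbitrary topological spaces without any complete regularity hypothesis and we therefore have no explicit model of it at our disposal; the argument sketched above squeezes denseness out of the adjunction alone via the uniqueness of extensions, which I expect to be the cleanest route.
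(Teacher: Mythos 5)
Your proof is correct and follows essentially the same route as the paper: surjectivity via the universal property of $\zeta$ applied to the finite (hence totally disconnected compact Hausdorff) image $f(X)$, and injectivity plus the isometry via density of $\iota_{\zeta,X}(X)$ in $\zeta(X)$. The only difference is that the paper cites \cite{Mih14} Lemma 1.8 for that density, whereas you derive it directly from the adjunction via the uniqueness of extensions; your self-contained argument is valid.
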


\begin{proof}
Since $\iota_{\zeta,X}(X)$ is dense in $\zeta(X)$ by \cite{Mih14} Lemma 1.8, the pre-composition by $\iota_{\zeta,X}$ is an isometry. Let $f \in \rCfin(X,R)$. Since $f(X)$ is a finite subset of the Hausdorff topological space $M$, $f(X)$ is a totally disconnected compact Hausdorff topological space. Therefore, by the universality of Banaschewski compactification, there exists a unique $\tl{f} \in \rC(\zeta(X),M)$ such that $\tl{f} \circ \iota_{\zeta,X} = f$. This implies the assertion.
\end{proof}

For each $U \in \CO(X)$, we denote by $\cl_{\zeta,X}(U) \subset \zeta(X)$ the closure of $\iota_{\zeta,X}(U)$. Proposition \ref{extension property} gives an alternative proof of the following well-known fact:

\begin{crl}
\label{extension property for clopen subset}
For any $U \in \CO(X)$, $\cl_{\zeta,X}(U)$ is the unique $\ol{U} \in \CO(\zeta(X))$ satisfying $\iota_{\zeta,X}^{-1}(\ol{U}) = U$.
\end{crl}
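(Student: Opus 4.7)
The plan is to apply Proposition~\ref{extension property} to the characteristic function $1_{X,R,U}$ and extract the desired clopen subset of $\zeta(X)$ as the preimage of $\{1\}$ under the resulting continuous extension. Since the statement is purely topological, we are free to fix any convenient coefficient ring isolated at $0$; take, for example, $R = \Z_{\triv}$ and $M = R$ in the proposition.

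First, Proposition~\ref{extension property} produces a unique $\tl{1} \in \rC(\zeta(X),R)$ with $\tl{1} \circ \iota_{\zeta,X} = 1_{X,R,U}$. Continuity of $\tl{1}$, together with density of $\iota_{\zeta,X}(X)$ in $\zeta(X)$ (\cite{Mih14} Lemma~1.8), forces $\tl{1}(\zeta(X))$ to lie in the closure of $\{0,1\}$ in $R$, and this closure equals $\{0,1\}$ since $R$ is discrete by Proposition~\ref{isolated at 0 implies discrete}. Because $\{1\}$ is clopen in $R$, the set $\ol{U} \coloneqq \tl{1}^{-1}(\{1\})$ is clopen in $\zeta(X)$, and the identity $\iota_{\zeta,X}^{-1}(\ol{U}) = 1_{X,R,U}^{-1}(\{1\}) = U$ is immediate. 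This handles existence.

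For both uniqueness and the identification with $\cl_{\zeta,X}(U)$, the plan is to show that any $\ol{V} \in \CO(\zeta(X))$ satisfying $\iota_{\zeta,X}^{-1}(\ol{V}) = U$ must coincide with $\cl_{\zeta,X}(U)$. The hypothesis reads as $\iota_{\zeta,X}(U) \subseteq \ol{V}$ and $\iota_{\zeta,X}(X \setminus U) \subseteq \zeta(X) \setminus \ol{V}$; since both $\ol{V}$ and $\zeta(X) \setminus \ol{V}$ are closed, passing to closures yields $\cl_{\zeta,X}(U) \subseteq \ol{V}$ together with $\cl_{\zeta,X}(X \setminus U) \cap \ol{V} = \emptyset$. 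Density of $\iota_{\zeta,X}(X)$ gives $\zeta(X) = \cl_{\zeta,X}(U) \cup \cl_{\zeta,X}(X \setminus U)$, and intersecting this decomposition with $\ol{V}$ collapses the right-hand side to $\ol{V} \subseteq \cl_{\zeta,X}(U)$, so $\ol{V} = \cl_{\zeta,X}(U)$.

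The only genuinely delicate point is the observation that the continuous extension $\tl{1}$ takes values in $\{0,1\}$: this is exactly where the hypothesis of isolation at $0$, channeled through Proposition~\ref{isolated at 0 implies discrete}, does its work. The subsequent closure-and-complement manipulation is routine.
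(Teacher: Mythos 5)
Your proof is correct and follows essentially the same route as the paper, which likewise deduces the statement from Proposition \ref{extension property} applied to characteristic functions valued in a trivially normed ring (the paper takes $R = M = (\F_2)_{\triv}$ and invokes the representability of $\CO$ by $(\F_2)_{\triv}$, whereas you take $\Z_{\triv}$). The only difference is cosmetic: you obtain uniqueness and the identification with $\cl_{\zeta,X}(U)$ by an explicit density-and-closure argument, while the paper absorbs both into the bijectivity of the precomposition map and leaves the identification with the closure implicit.
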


\begin{proof}
The assertion follows from Proposition \ref{extension property} applied to $R = M = (\F_2)_{\triv}$, because $\CO \colon \Tpl \to \Set$ is represented by $(\F_2)_{\triv}$.
\end{proof}

We abbreviate $(\cC,X,R)$ to $\cD_3$. For each $\cF \in \zeta(X)$, we denote by $\lim^{\cD_3}_{\cF}$ the $R$-algebra homomorphism $\rCfin(X,R) \to R$, which is bounded with respect to the restriction of $\n{\cdot}_{\rCbd(X,R)}$, assigning to each $f \in \rCfin(X,R)$ the limit $\lim_{\cF} f$, i.e.\ the unique $a \in R$ satisfying the property that there exists a $U \in \cF$ such that $f |_U$ is the constant map with value $a$. We denote by $F_{\Tpl}^{\Set}$ the forgetful functor $\Tpl \to \Set$. We show an isolated analogue of \cite{Mih14} Corollary 3.6 (iv), which gives a non-Archimedean counterpart of \Gelfand's transform.

\begin{prp}
\label{Gelfand transform}
If $R$ is isolated at $0$, then the correspondence assigning to each $Y \in \Tpl$ the map $\lim^{(\cC,Y,R)} \colon \zeta(Y) \to \Hom_{\Alg_{\cC}(R)}(\rCfin(Y,R),R), \ \cF \mapsto \lim^{(\cC,Y,R)}_{\cF}$ is a natural transformation $\lim^{(\cC,\bullet,R)} \colon F_{\Tpl}^{\Set} \circ \zeta \Rightarrow \Hom_{\Alg_{\cC}(R)}(\Gamma_{\cD_2},R)$, which consists of injective maps unless $R = \ens{0}$.
\end{prp}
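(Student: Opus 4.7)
The plan is to verify in turn: well-definedness of each $\lim^{\cD_3}_{\cF}$ as a bounded $R$-algebra homomorphism, naturality in $Y$, and injectivity whenever $R \ne \ens{0}$.

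First I would check that $\lim^{\cD_3}_{\cF}$ is well-defined and is a morphism in $\Alg_{\cC}(R)$. By Proposition \ref{idempotent generates C(X,R)} (ii), any $f \in \rCfin(Y,R)$ admits a decomposition $f = \sum_{U \in \cU} 1_{Y,R,U} e(U)$ for a finite disjoint clopen partition $\cU$ of $Y$. Since $\cF$ is an ultrafilter of clopen sets and $\cU$ is finite, disjoint, and covers $Y$, exactly one member $U_{\cF} \in \cU$ lies in $\cF$; setting $\lim^{\cD_3}_{\cF}(f) \coloneqq e(U_{\cF})$ is independent of the chosen partition, since any two such partitions admit a common refinement and $U_{\cF}$ is determined up to refinement by $\cF$. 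Additivity and multiplicativity then reduce to the combinatorial observation that $U_{\cF}$ is compatible with intersections; sending $1$ to $1$ is immediate. Boundedness with constant $1$ is clear because $\n{e(U_{\cF})}_R \leq \n{f}_{\rCbd(Y,R)}$.

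Next I would establish naturality. Given a continuous $g \colon Y_1 \to Y_2$ in $\Tpl$, the functor $\zeta$ sends $g$ to the map $\zeta(g)$ characterised on ultrafilters of clopen sets by $\zeta(g)(\cF) = \set{V \in \CO(Y_2)}{g^{-1}(V) \in \cF}$, while $\Gamma_{\cD_2}(g)$ is the pre-composition $g^* \colon \rCfin(Y_2,R) \to \rCfin(Y_1,R), \ h \mapsto h \circ g$. Taking $h \in \rCfin(Y_2,R)$ and a decomposition $h = \sum_{V \in \cV} 1_{Y_2,R,V} e(V)$, the unique $V_{\zeta(g)(\cF)} \in \cV$ belonging to $\zeta(g)(\cF)$ satisfies $g^{-1}(V_{\zeta(g)(\cF)}) \in \cF$, and this set is precisely the member of the disjoint clopen partition $g^{-1}(\cV)$ of $Y_1$ on which $h \circ g$ is constantly equal to $e(V_{\zeta(g)(\cF)})$. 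Hence $\lim^{(\cC,Y_1,R)}_{\cF}(g^*(h)) = e(V_{\zeta(g)(\cF)}) = \lim^{(\cC,Y_2,R)}_{\zeta(g)(\cF)}(h)$, which is the required commutativity.

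Finally I would prove injectivity. Suppose $\cF_1, \cF_2 \in \zeta(Y)$ are distinct; by the ultrafilter property there exists $U \in \CO(Y)$ with $U \in \cF_1$ and $Y \setminus U \in \cF_2$. The characteristic function $1_{Y,R,U} \in \rCfin(Y,R)$ then satisfies $\lim^{\cD_3}_{\cF_1}(1_{Y,R,U}) = 1_R$ while $\lim^{\cD_3}_{\cF_2}(1_{Y,R,U}) = 0_R$, and these differ whenever $R \ne \ens{0}$.

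I do not anticipate a serious obstacle; the main technical point is keeping the ultrafilter-versus-partition bookkeeping straight, and in particular identifying $\zeta(g)$ with the pullback of ultrafilters along $g^{-1}$, which is the content of the explicit description of $\zeta$ invoked just before the proposition. The only mild subtlety is that we are working with ultrafilters of clopen sets rather than of arbitrary subsets, so the finiteness of $\cU$ in Proposition \ref{idempotent generates C(X,R)} (ii) is essential in order to apply the ultrafilter axiom to conclude that a unique member is selected.
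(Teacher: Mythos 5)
Your proposal is correct and follows essentially the same route as the paper: the injectivity step is exactly the paper's argument of separating two points (there, points of $\zeta(Y)$ identified via Proposition \ref{extension property} with evaluation functionals on $\rC(\zeta(Y),R)$; here, directly as ultrafilters of clopen sets) by a clopen subset $U$ and noting that the characteristic function $1_{Y,R,U}$ is sent to $1 \neq 0$ by one functional and to $0$ by the other, using $R \neq \ens{0}$. The additional detail you supply on well-definedness and naturality is material the paper dismisses as immediate from the construction of $\lim^{(\cC,Y,R)}$, and your verification is consistent with it.
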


\begin{proof}
The first assertion immediately follows from the construction of $\lim^{(\cC,Y,R)}$ for a $Y \in \Tpl$. We show the second assertion. By Proposition \ref{extension property}, it suffices to show that if $R \neq \ens{0}$, then for any $Y \in \TDCH$, the map $\ev \colon Y \to \Hom_{\Alg_{\cC}(R)}(\rC(Y,R),R),\ y \mapsto \ev_y$ is injective, where $\ev_y$ denotes the evaluation at $y$. Let $(y_0,y_1) \in Y^2$ with $y_0 \neq y_1$. Since $Y \in \TDCH$, there exists a $U \in \CO(Y)$ such that $y_0 \in U$ and $y_1 \notin U$. Then, we have $\ev_{y_0}(1_{X,R,U}) = 1 \neq 0 = \ev_{y_1}(1_{X,R,U})$ by $R \neq \ens{0}$. This implies $\ev_{y_0} \neq \ev_{y_1}$.
\end{proof}

As we announced at the beginning of this section, we will show \Gelfand's duality in Theorem \ref{Gelfand duality} under the assumption that $\cM(R)$ is connected, and this implies that $\lim^{(\cC,\bullet,R)}$ is a natural isomorphism under the same assumption. On the other hand, $\lim^{(\cC,\bullet,R)}$ is not a natural isomorphism if $R$ does not have precisely one non-zero idempotent element, as the following example shows:

\begin{exm}
\label{counterexample of Gelfand's duality}
Let $Y \in \TDCH$ denote the discrete set $\ens{0,1}$. If $R$ has no non-zero idempotent element, i.e.\ $R = \ens{0}$, then both of $\lim^{(\cC,Y,R)}(0)$ and $\lim^{(\cC,Y,R)}(1)$ are zero morphisms $\rCfin(Y,R) \to R$, and hence $\lim^{(\cC,Y,R)}$ is not injective. If $R$ admits a non-zero idempotent $e \neq 1$, then the map $\rCfin(Y,R) \to R, \ f \mapsto e f(0) + (1-e) f(1)$ is an element of $\Hom_{\Alg_{\cC}(R)}(\rCfin(Y,R),R)$ which does not belong to the image of $\lim^{(\cC,Y,R)}$, and hence $\lim^{(\cC,Y,R)}$ is not surjective.
\end{exm}

Suppose that $R$ is isolated at $0$. For each $x \in \cM_{\cD_2}(\rCfin(X,R))$, we denote by $\cF_{X,x}$ the subset $\set{U \in \CO(X)}{x(1_{X,R,U}) \neq 0}$ of $\CO(X)$, which is an ultrafilter of $\CO(X)$ by an argument completely parallel to that in \cite{Mih14} p.\ 10 for the case where $R$ is a complete valued field, and by $x|_R$ the composite of $x$ and the canonical ring homomorphism $R \to \rCfin(X,R)$. We denote by $G_{\cD_3}$ the map $\cM_{\cD_2}(\rCfin(X,R)) \to \zeta(X) \times \cM_{\cD_2}(R), \ x \mapsto (\cF_{X,x},x|_R)$. We show an isolated analogue of \cite{Mih14} Theorem 2.1.

\begin{thm}
\label{Banaschweski compatification}
If $R$ is isolated at $0$, then $G_{\cD_3}$ is a homeomorphism.
\end{thm}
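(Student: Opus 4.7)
The plan is to construct an explicit inverse map $H \colon \zeta(X) \times \cM_{\cD_2}(R) \to \cM_{\cD_2}(\rCfin(X,R))$, verify that $G_{\cD_3} \circ H = \id$ and $H \circ G_{\cD_3} = \id$, check continuity of $G_{\cD_3}$, and conclude via the fact that a continuous bijection between compact Hausdorff spaces is a homeomorphism. The Berkovich spectrum is always compact Hausdorff and $\zeta(X) \times \cM_{\cD_2}(R)$ is a product of such, so this last step is automatic once the bijection and one-sided continuity are in hand.

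The inverse is given by $H(\cF, y)(f) = y(\lim^{\cD_3}_{\cF} f)$. By Proposition \ref{idempotent generates C(X,R)}, every $f \in \rCfin(X,R)$ admits a finite disjoint clopen decomposition on which it is locally constant, so $\lim^{\cD_3}_{\cF} \colon \rCfin(X,R) \to R$ is a well-defined bounded ring homomorphism of norm at most $1$; composing with $y \in \cM_{\cD_2}(R)$ yields a point of $\cM_{\cD_2}(\rCfin(X,R))$. Evaluating $H(\cF, y)$ at the idempotent $1_{X,R,U}$ returns $y(1) = 1$ if $U \in \cF$ and $y(0) = 0$ otherwise, so $\cF_{X, H(\cF,y)} = \cF$; the equality $H(\cF, y)|_R = y$ is immediate since constants have trivial ultrafilter limits. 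This gives $G_{\cD_3} \circ H = \id$. For the converse, fix $x \in \cM_{\cD_2}(\rCfin(X,R))$ and $f \in \rCfin(X,R)$, and decompose $f = \sum_{U \in \cU} 1_{X,R,U} e(U)$ via Proposition \ref{idempotent generates C(X,R)}. Multiplicativity of $x$ on the idempotents $1_{X,R,U}$ forces each $x(1_{X,R,U}) \in \ens{0,1}$; combined with $x(1_{X,R,U}) x(1_{X,R,V}) = 0$ for disjoint $U,V$ and $x(1) = 1$, exactly one $U_0 \in \cU$ satisfies $x(1_{X,R,U_0}) = 1$, and this $U_0$ is the unique element of $\cU$ lying in $\cF_{X,x}$. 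Applying $x$ to $f = 1_{X,R,U_0} e(U_0) + \sum_{U \neq U_0} 1_{X,R,U} e(U)$ and using the seminorm bound then yields $x(f) = x|_R(e(U_0)) = x|_R(\lim^{\cD_3}_{\cF_{X,x}} f) = H(G_{\cD_3}(x))(f)$.

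For continuity of $G_{\cD_3}$, the projection to $\zeta(X)$ is continuous because the preimage of the basic clopen $\set{\cF \in \zeta(X)}{U \in \cF}$ equals $\set{x \in \cM_{\cD_2}(\rCfin(X,R))}{x(1_{X,R,U}) > 1/2}$, using the $\ens{0,1}$-dichotomy for idempotents, and $x \mapsto x|_R$ is continuous as pre-composition with the bounded ring homomorphism $R \to \rCfin(X,R)$. The main obstacle is the surjectivity of $G_{\cD_3}$: one must show that a bounded multiplicative seminorm on $\rCfin(X,R)$ is entirely determined by its behaviour on idempotents and constants. The two crucial ingredients are Proposition \ref{idempotent generates C(X,R)}, which reduces $x(f)$ to a computation in $R$ that $x|_R$ can carry out, and the $\ens{0,1}$-dichotomy for $x(1_{X,R,U})$ forced by multiplicativity together with $x(1) = 1$; without these one could not express $x(f)$ through data living on $\zeta(X) \times \cM_{\cD_2}(R)$, and everything else reduces to routine verification.
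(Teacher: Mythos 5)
Your proposal is correct and follows essentially the same route as the paper: continuity is checked on basic opens via the idempotents $1_{X,R,U}$ and the restriction to $R$, and bijectivity is reduced to the compact Hausdorff setting, with the inverse $(\cF,y)\mapsto y\circ\lim^{\cD_3}_{\cF}$ and the key computation being the decomposition of $f$ from Proposition \ref{idempotent generates C(X,R)} together with the ultrafilter/multiplicativity argument showing exactly one $U_0\in\cU$ has $x(1_{X,R,U_0})\neq 0$. Packaging injectivity as $H\circ G_{\cD_3}=\id$ rather than comparing two preimages directly is only a cosmetic difference.
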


\begin{proof}
An open basis of $\zeta(X) \times \cM_{\cD_2}(R)$ is given by the set of subsets of the form
\begin{eqnarray*}
\set{\cF \in \zeta(X)}{U \in \cF} \times \set{\ol{x} \in \cM_{\cD_2}(R)}{r_0 < \ol{x}(a) < r_1}
\end{eqnarray*}
with $(U,a,r_0,r_1) \in \CO(X) \times R \times (0,\infty)^2$, and its preimage by $G_{\cD_3}$ coincides with
\begin{eqnarray*}
\set{x \in \cM_{\cD_2}(\rCfin(X,R))}{x(1_{X,R,U}) \neq 0 \land r_0 < x(a) < r_1},
\end{eqnarray*}
which is an open subset of $\cM_{\cD_2}(\rCfin(X,R))$. This implies the continuity of $G_{\cD_3}$. Since $G_{\cD_3}$ is a continuous map between compact Hausdorff topological spaces, it suffices to show its bijectivity.

\vspace{0.1in}
First, let $(x_0,x_1) \in \cM_{\cD_2}(\rCfin(X,R))^2$ with $G_{\cD_3}(x_0) = G_{\cD_3}(x_1)$. In order to show $x_0 = x_1$, it suffices to show $x_0(f) = x_1(f)$ for any $f \in \rCfin(X,R)$. By Proposition \ref{idempotent generates C(X,R)} (ii), there exists a pair $(\cU,e)$ of a finite subset $\cU \subset \CO(X) \setminus \ens{\emptyset}$ and a map $e \colon \cU \to R$ such that $\cU$ is a disjoint cover of $X$ and $\sum_{U \in \cU} e(U) 1_{X,R,U} = f$. Put $\cF \coloneqq \cF_{X,x_0} = \cF_{X,x_1}$. If $\cU \cap \cF = \emptyset$, then the images of $1_{X,R,U}$ in the completed residue fields at $x_0$ and $x_1$ are $0$ for any $U \in \cF$, and hence so are the images of $f$. If $U \in \cF$ for some $U \in \cU$, then there is no other $U' \in \cU$ such that $U' \in \cF$ because $\cF$ is an ultrafilter, and hence we obtain
\begin{eqnarray*}
x_0(f) = x_0(e(U) 1_{X,R,U}) = x_0|_R(e(U)) = x_1|_R(e(U)) = x_1(e(U) 1_{X,R,U}) = x_1(f).
\end{eqnarray*}
Next, let $(\cF,\ol{x}) \in \zeta(X) \times \cM_{\cD_2}(R)$. We denote by $x \in \cM(\rCfin(X,R))$ the bounded multiplicative seminorm assigning $\ol{x}(\lim^{\cD_3}_{\cF} (f)) \in [0,\infty)$ to each $f \in \rCfin(X,R)$. We have
\begin{eqnarray*}
& & \cF_{X,x} = \set{U \in \CO(X)}{x(1_{X,R,U}) \neq 0} = \set{U \in \CO(X)}{\ol{x} \left( \textstyle\lim^{\cD_3}_{\cF}\displaystyle (1_{X,R,U}) \right) \neq 0} \\
& = & \set{U \in \CO(X)}{\textstyle\lim^{\cD_3}_{\cF}\displaystyle (1_{X,R,U}) = 1} = \set{U \in \CO(X)}{\exists V \in \cF, V \subset U} = \cF.
\end{eqnarray*}
For any $a \in R$, we have $x |_R(a) = x(a 1_{X,R,X}) = \ol{x}(\lim^{\cD_3}_{\cF} (a 1_{X,R,X})) = \ol{x}(a)$. This implies $x |_R = \ol{x}$. We obtain $G_{\cD_3}(x) = (\cF,\ol{x})$.
\end{proof}

By Theorem \ref{Banaschweski compatification}, we obtain an isolated analogue of \cite{Mih14} Corollary 2.3.

\begin{crl}
\label{Banaschweski compatification functor}
If $R$ is isolated at $0$, then the correspondence assigning $G_{(\cC,Y,R)}$ to each $Y \in \Tpl$ gives a natural isomorphism $G_{(\cC,\bullet,R)} \colon \cM_{\cD_2} \circ \Gamma_{\cD_2} \Rightarrow \zeta \times \cM_{\cD_2}(R)$
\end{crl}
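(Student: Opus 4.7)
Theorem \ref{Banaschweski compatification} already produces, for each fixed $Y \in \Tpl$, the component $G_{(\cC,Y,R)}$ and shows it to be a homeomorphism. Hence the only content of the corollary that remains to be established is the \emph{naturality} of the family $\{G_{(\cC,Y,R)}\}_{Y \in \Tpl}$ with respect to morphisms of $\Tpl$. The strategy is to fix an arbitrary continuous map and to check commutativity of the naturality square by unpacking the two coordinates of $G_{(\cC,Y,R)}$ separately.

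Fix a continuous map $f \colon Y \to Y'$. Precomposition by $f$ gives $f^{*} \coloneqq \Gamma_{\cD_2}(f) \colon \rCfin(Y', R) \to \rCfin(Y, R)$ in $\Alg_{\cC}(R)$, hence a continuous map $\cM_{\cD_2}(f^{*}) \colon \cM_{\cD_2}(\rCfin(Y, R)) \to \cM_{\cD_2}(\rCfin(Y', R))$, $x \mapsto x \circ f^{*}$. The target side carries $\zeta(f) \times \id_{\cM_{\cD_2}(R)}$. For $x \in \cM_{\cD_2}(\rCfin(Y, R))$ I would compare the two coordinates of $G_{(\cC,Y',R)}(x \circ f^*)$ with those of $(\zeta(f) \times \id)(G_{(\cC,Y,R)}(x))$.

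For the $\zeta$-coordinate, I would use the elementary identity $f^{*}(1_{Y', R, U}) = 1_{Y, R, f^{-1}(U)}$ for every $U \in \CO(Y')$. This gives
\[
\cF_{Y', x \circ f^{*}} = \set{U \in \CO(Y')}{x(1_{Y, R, f^{-1}(U)}) \neq 0} = \set{U \in \CO(Y')}{f^{-1}(U) \in \cF_{Y, x}},
\]
which coincides with $\zeta(f)(\cF_{Y,x})$ under the ultrafilter description of the Banaschewski compactification recalled above Proposition \ref{Gelfand transform}. For the $\cM_{\cD_2}(R)$-coordinate, the canonical structure morphism $\eta_{Z} \colon R \to \rCfin(Z, R)$, sending $a$ to the constant function with value $a$, satisfies $f^{*} \circ \eta_{Y'} = \eta_{Y}$ because constants pull back to constants; hence $(x \circ f^{*})|_{R} = x \circ (f^{*} \circ \eta_{Y'}) = x \circ \eta_{Y} = x|_{R}$.

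No step presents real difficulty; the only point requiring care is to match the definition of $\cF_{Y,x}$ (via nonvanishing of characteristic functions) with the ultrafilter pushforward that describes $\zeta(f)$. This match is immediate from $f^{*}(1_{Y', R, U}) = 1_{Y, R, f^{-1}(U)}$, so both diagrams commute coordinatewise and $G_{(\cC,\bullet,R)}$ is a natural transformation; together with Theorem \ref{Banaschweski compatification} it is a natural isomorphism.
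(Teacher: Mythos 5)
Your proof is correct and follows exactly the route the paper intends: the paper states this corollary with no written proof, treating it as an immediate consequence of Theorem \ref{Banaschweski compatification}, and the naturality check you supply (coordinatewise, via $f^{*}(1_{Y',R,U}) = 1_{Y,R,f^{-1}(U)}$ for the $\zeta$-component and preservation of constants for the $\cM(R)$-component) is precisely the omitted verification.
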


We conclude this section by proving \Gelfand's duality for algebras of continuous functions over Banach rings isolated at $0$ with connected Berkovich spectrum.

\begin{thm} 
\label{Gelfand duality}
Let $R$ be a Banach ring isolated at $0$ such that $\cM(R)$ is connected. Then, the pair $(\zeta \circ \cM,\Gamma_{\cD_2})$ of functors realises a duality, i.e.\ a contravariant equivalence, between $\TDCH$ and the essential image of $\Gamma_{\cD_2}$.
\end{thm}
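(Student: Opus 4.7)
The plan is to verify that $\zeta \circ \cM_{\cD_2}$ is a quasi-inverse to $\Gamma_{\cD_2}|_{\TDCH^{\op}}$ by exhibiting natural isomorphisms $X \cong (\zeta \circ \cM_{\cD_2})(\Gamma_{\cD_2}(X))$ for $X \in \TDCH$, and $A \cong \Gamma_{\cD_2}((\zeta \circ \cM_{\cD_2})(A))$ for $A$ in the essential image of $\Gamma_{\cD_2}$. The key observation is that Corollary \ref{Banaschweski compatification functor} already computes $\cM_{\cD_2}(\Gamma_{\cD_2}(X))$ as $\zeta(X) \times \cM_{\cD_2}(R)$, so the remaining geometric content is that the factor $\cM_{\cD_2}(R)$ becomes invisible after applying $\zeta$, thanks to the connectedness hypothesis.

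The central lemma I would establish first is that, for any $Y \in \Tpl$ and any nonempty connected compact Hausdorff space $C$, the projection $Y \times C \to Y$ induces a natural homeomorphism $\zeta(Y \times C) \stackrel{\cong}{\to} \zeta(Y)$. To prove it, I would use the universal property of $\zeta$ as left adjoint to $F_{\TDCH}^{\Tpl}$: for any $Z \in \TDCH$ and any continuous $f \colon Y \times C \to Z$, each slice $\ens{y} \times C$ is connected, hence maps into a singleton of the totally disconnected space $Z$, so $f$ factors uniquely through the projection. This yields $\Hom_{\TDCH}(\zeta(Y \times C), Z) \cong \Hom_{\Tpl}(Y, Z) \cong \Hom_{\TDCH}(\zeta(Y), Z)$ naturally in $Y$ and $Z$, and Yoneda supplies the homeomorphism; note that $\cM_{\cD_2}(R)$ is nonempty for $R \neq \ens{0}$ (the case $R = \ens{0}$ rendering the statement trivial).

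Applying this lemma with $C = \cM_{\cD_2}(R)$ and combining with Corollary \ref{Banaschweski compatification functor} produces, for every $X \in \Tpl$, a natural homeomorphism $(\zeta \circ \cM_{\cD_2})(\Gamma_{\cD_2}(X)) \cong \zeta(\zeta(X) \times \cM_{\cD_2}(R)) \cong \zeta(X)$. Specialising to $X \in \TDCH$, the unit $\iota_{\zeta,X} \colon X \to \zeta(X)$ is a homeomorphism, giving the first required natural isomorphism. For the second, given $A \cong \Gamma_{\cD_2}(Y)$ with $Y \in \Tpl$, the same chain yields $(\zeta \circ \cM_{\cD_2})(A) \cong \zeta(Y)$; since $\zeta(Y) \in \TDCH$ is compact and $R$ is isolated at $0$, Proposition \ref{Cfin is closed}~(ii) identifies $\Gamma_{\cD_2}(\zeta(Y))$ with $\rC(\zeta(Y), R)$, which Proposition \ref{extension property} naturally and isometrically identifies with $\rCfin(Y, R) = A$.

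The main obstacle will be verifying naturality throughout the chain. Concretely, I must check that the Yoneda-produced homeomorphism $\zeta(Y \times \cM_{\cD_2}(R)) \cong \zeta(Y)$ agrees with the map induced by the projection, so that it is natural in continuous maps $Y \to Y'$; naturality of the second isomorphism in $A$ then reduces to naturality in $Y$ because the essential image of $\Gamma_{\cD_2}$ is by definition a full subcategory of $\Alg_{\cC}(R)$. Once naturality is in place, the two natural isomorphisms exhibit $\Gamma_{\cD_2}$ as a fully faithful contravariant functor from $\TDCH$ onto its essential image with explicit quasi-inverse $\zeta \circ \cM_{\cD_2}$, which is precisely the asserted duality.
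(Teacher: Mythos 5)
Your proposal is correct and follows essentially the same route as the paper: the paper's proof also rests on the observation that the first projection $\zeta(Y) \times \cM(R) \to \zeta(Y)$ is a quotient map with connected fibres and hence satisfies the universal property of the Banaschewski compactification (your ``central lemma'' is exactly this fact, proved by the same slice argument), combined with Corollary \ref{Banaschweski compatification functor} and Proposition \ref{extension property}. The only point worth making explicit is that the continuity of the factored map $Y \to Z$ in your lemma uses that the projection is a quotient map (an open surjection since $C \neq \emptyset$), which the paper states explicitly.
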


We note that in our context, a topological space is said to be {\it connected} if it has precisely one connected component, and hence $\emptyset$ is not connected.

\begin{proof}
We denote by $F_{\rL}$ the contravariant functor from the essential image of $\Gamma_{\cD_2}$ to $\TDCH$ induced by $\zeta \circ \cM$, and by $F_{\rR}$ the contravariant functor from $\TDCH$ to the essential image of $\Gamma_{\cD_2}$ induced by $\Gamma_{\cD_2}$. First, we construct a natural isomorphism $F_{\rL}(F_{\rR}(Y)) \to Y$ for a $Y \in \TDCH$. By Corollary \ref{Banaschweski compatification functor}, $G_{(\cC,Y,R)} \colon \cM(\Gamma_{\cD_2}(Y)) \to \zeta(Y) \times \cM(R)$ is an isomorphism in $\Tpl$. Since the first projection $\zeta(Y) \times \cM(R) \to \zeta(Y)$ is a quotient map whose fibres are connected, it satisfies the universality of the Banaschewski compactification. This implies that the natural morphisms
\begin{eqnarray*}
\xymatrix{
F_{\rL}(F_{\rR}(Y)) \ar@{=}[rr] \ar@{.>}[d] & & \zeta(\cM(\Gamma_{\cD_2}(Y))) \ar[d]^-{\zeta(G_{(\cC,Y,R)})} \\
Y \ar[r] & \zeta(Y) & \zeta(\zeta(Y) \times \cM(R)) \ar[l]
}
\end{eqnarray*}
in $\Tpl$ are isomorphisms, and hence there exists a unique dotted filler, which is a natural isomorphism in $\TDCH$.

\vspace{0.1in}
Next, we construct a natural isomorphism $\Gamma_{\cD_2}(Y) \to F_{\rR}(F_{\rL}(\Gamma_{\cD_2}(Y)))$ for a $Y \in \Tpl$. By Corollary \ref{Banaschweski compatification functor}, $G_{(\cC,Y,R)} \colon \cM(\Gamma_{\cD_2}(Y)) \to \zeta(Y) \times \cM(R)$ is an isomorphism in $\Tpl$. Since the first projection $\pi \colon \zeta(Y) \times \cM(R) \to \zeta(Y)$ is a quotient map whose fibres are homeomorphic to the connected topological space $\cM(R)$, it satisfies the universality of the Banaschewski compactification. This implies that the natural morphisms
\begin{eqnarray*}
\xymatrix{
\Gamma_{\cD_2}(Y) \ar@{.>}[d] & \Gamma_{\cD_2}(\zeta(Y)) \ar[r]^-{\Gamma_{\cD_2}(\pi)} \ar[l] & \Gamma_{\cD_2}(\zeta(Y) \times \cM(R)) \ar[d]^-{\Gamma_{\cD_2}(G_{(\cC,Y,R)})} \\
F_{\rR}(F_{\rL}(\Gamma_{\cD_2}(Y))) \ar@{=}[r] & \Gamma_{\cD_2}(\zeta(\cM(\Gamma_{\cD_2}(Y)))) \ar[r] & \Gamma_{\cD_2}(\cM(\Gamma_{\cD_2}(Y))) \\
}
\end{eqnarray*}
in $\Alg_{\cC}(R)$ are isomorphisms by Proposition \ref{extension property}, and there exists a unique dotted filler, which is a natural isomorphism in $\Alg_{\cC}(R)$.
\end{proof}

\begin{rmk}
In the case where $R = (\Z, \v{\cdot}_\infty)$, and under the hypothesis that $X$ is the Stone--\Cech compactification of a discrete topological space, Theorem \ref{Gelfand duality} has been recently independently obtained by Kobi Kremnizer and Andr\'e Henriques (private communication with the first listed author).
\end{rmk}

We say that $R$ {\it satisfies Shilov idempotent theorem} if every $U \in \CO(\cM(R))$ coincides with $\set{x \in \cM(R)}{x(e) = 1}$ for some idempotent $e \in R$. By the definition, Shilov idempotent theorem is a weak variant of Tate's acyclicity focusing only on a disjoint clopen covering. For example, every Banach algebra over a complete valued field, e.g.\ a trivially valued field, satisfies Shilov idempotent theorem by \cite{Ber90} 7.4.1 Theorem.

\begin{crl}
Let $R$ be a Banach ring isolated at $0$ satisfying Shilov idempotent theorem. Then, the pair $(\zeta \circ \cM,\Gamma_{\cD_2})$ of functors realises a duality between $\TDCH$ and the essential image of $\Gamma_{\cD_2}$ if and only if $R$ has precisely one non-zero idempotent element.
\end{crl}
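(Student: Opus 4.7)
The plan is to reduce the corollary to Theorem \ref{Gelfand duality} by showing that, under the Shilov idempotent hypothesis, $\cM(R)$ is connected if and only if $R$ has precisely one non-zero idempotent. Granted this equivalence, the $\Leftarrow$ direction is an immediate application of Theorem \ref{Gelfand duality}, while the $\Rightarrow$ direction follows by evaluating the hypothesised duality at the one-point space $\ens{*} \in \TDCH$.

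For the equivalence of the two conditions I would rely on two observations about any $x \in \cM(R)$: an idempotent $e \in R$ satisfies $x(e) \in \ens{0,1}$ since $x(e)^2 = x(e^2) = x(e)$; and a non-zero idempotent has spectral radius $\rho(e) = \sup_{x \in \cM(R)} x(e) = \lim_n \n{e^n}^{1/n} = \lim_n \n{e}^{1/n} = 1$, using $e^n = e$ and $\n{e} > 0$. Consequently, $\set{x \in \cM(R)}{x(e) = 1} = \emptyset$ forces $\rho(e) = 0$ and hence $e = 0$; and $\set{x \in \cM(R)}{x(e) = 1} = \cM(R)$ forces, via $x(e) x(1-e) = x(e(1-e)) = 0$, that $x(1-e) = 0$ for every $x$, so $\rho(1-e) = 0$ and $e = 1$. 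Combined with the Shilov hypothesis that every element of $\CO(\cM(R))$ has the form $\set{x \in \cM(R)}{x(e) = 1}$ for some idempotent $e$, this shows $\CO(\cM(R)) = \ens{\emptyset, \cM(R)}$ exactly when the idempotents of $R$ are $\ens{0,1}$ with $0 \neq 1$, which by the paper's convention is the same as $\cM(R)$ being connected.

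For the $\Rightarrow$ direction of the corollary I would evaluate the natural isomorphism $\zeta \circ \cM_{\cD_2} \circ \Gamma_{\cD_2} \Rightarrow \id_{\TDCH}$ at $\ens{*} \in \TDCH$. Since $\Gamma_{\cD_2}(\ens{*}) = \rCfin(\ens{*}, R) = R$, this yields $\zeta(\cM(R)) \cong \ens{*}$. By the universal property of the Banaschewski compactification, every continuous map from $\cM(R)$ to a member of $\TDCH$ is then constant, so $\cM(R)$ admits no non-trivial clopen and, being non-empty (else $\zeta(\cM(R)) = \emptyset$), is connected. The equivalence above then pins down the idempotents of $R$ to $\ens{0,1}$ with $0 \neq 1$, as required.

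The main obstacle in this argument is the injectivity half of the correspondence between idempotents of $R$ and clopens of $\cM(R)$: the paper's formulation of the Shilov idempotent theorem provides only surjectivity, so the spectral-radius observations above are what ensures that connectedness of $\cM(R)$ genuinely forces the idempotents of $R$ to be trivial. Everything else reduces to Theorem \ref{Gelfand duality} and Corollary \ref{Banaschweski compatification functor}.
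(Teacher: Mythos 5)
Your proof is correct, and the ``if'' direction coincides with the paper's: precisely one non-zero idempotent plus Shilov gives connectedness of $\cM(R)$, and then Theorem \ref{Gelfand duality} applies. The ``only if'' direction, however, is handled quite differently. The paper argues by contraposition without any spectral theory and without using the Shilov hypothesis at all in that direction: if $R$ fails to have precisely one non-zero idempotent, the explicit Example \ref{counterexample of Gelfand's duality} (the two-point discrete space, with the seminorm-free observation that $f \mapsto e f(0) + (1-e) f(1)$ is an $R$-algebra map not induced by a point) shows directly that $\Gamma_{\cD_2}$ is not fully faithful. You instead evaluate the counit $\zeta \circ \cM \circ \Gamma_{\cD_2} \cong \id_{\TDCH}$ at $\ens{*}$ to get $\zeta(\cM(R)) \cong \ens{*}$, deduce connectedness of $\cM(R)$, and then convert connectedness into triviality of idempotents via the spectral radius formula $\rho(e) = \sup_{x}x(e) = \lim_n \n{e^n}^{1/n}$ applied to $e$ and $1-e$ (for general Banach rings one should pass to the equivalent submultiplicative norm to invoke this formula, but that changes neither $\cM(R)$ nor the conclusion, so this is harmless). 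Your route costs you Berkovich's spectral radius theorem but buys a cleaner intermediate statement --- under Shilov, $\cM(R)$ is connected if and only if the idempotents of $R$ are exactly $\ens{0,1}$ with $0 \neq 1$ --- which makes the corollary a literal reformulation of the connectedness hypothesis of Theorem \ref{Gelfand duality}; you are also right that the surjectivity-only formulation of Shilov in the paper is what makes the spectral-radius (or some equivalent) input necessary for the implication from connectedness to trivial idempotents. Both arguments are valid; the paper's is the more elementary of the two.
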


\begin{proof}
We follow the convention in the proof of Theorem \ref{Gelfand duality}. If $R$ does not have precisely one non-zero idempotent element, then $F_{\rR}$ is not fully faithful, because Example \ref{counterexample of Gelfand's duality} implies that $F_{\rR}(\pi)$ is not bijective, where $\pi$ denotes the unique continuous map $\ens{0,1} \to \ens{0}$, where $\ens{0,1}$ is equipped with the discrete topology. If $R$ has precisely one non-zero idempotent element, then $\cM(R)$ is connected by Shilov idempotent theorem, and hence the assertion follows from Theorem \ref{Gelfand duality}. 
\end{proof}

\section{Homotopy epimorphisms for functions valued in discrete Banach rings}
\label{Homotopy epimorphisms for functions valued in discrete Banach rings}

By Theorem \ref{Gelfand duality}, which is \Gelfand's duality over Banach rings isolated at $0$, morphisms $\rC(X, R) \to \rC(Y, R)$ of Banach $R$-algebras correspond to continuous maps $Y \to X$ between totally disconnected compact Hausdorff topological spaces . In this section, we characterise the morphisms between such algebras corresponding to closed topological embeddings as homotopy epimorphisms. The notion of homotopy epimorphism is the most basic notion of localisation of (simplicial) rings used in derived geometry. We refer the reader to \cite{BM21} \S 1.3 for a more detailed discussion of this notion and its use in the context of the theory of Banach rings and in analytic geometry.

\vspace{0.1in}
For an additive category $\mathcal{E}$, we denote by $\Chba(\mathcal{E})$ the additive category of chain complexes bounded above associated to $\mathcal{E}$ and chain morphisms. We put $\Chba_{\cC}(R) \coloneqq \Chba(\Mod_{\cC}(R))$. We denote by $\Derba_{\cC}(R)$ the derived category of $\Chba_{\cC}(R)$, i.e.\ the localisation of the homotopy category of $\Chba_{\cC}(R)$ by the multiplicative system of morphisms represented by quasi-isomorphisms (cf.\ \cite{BM21} \S 1.2 for more details about this construction). We recall that the completed tensor product $(\cdot) \wh{\otimes}_R (\cdot) \colon \Mod_{\cC}(R)^2 \to \Mod_{\cC}(R)$ can be derived to a functor $(\cdot) \wh{\otimes}_R^\bL (\cdot) \colon \Derba_{\cC}(R)^2\to \Derba_{\cC}(R)$ in a way similar to the algebraic tensor product. A morphism $\pi \colon A_0 \to A_1$ in $\Alg_{\cC}(R)$ is said to be a {\it homotopy epimorphism in $\Alg_{\cC}(R)$} if the multiplication $A_1 \wh{\otimes}_{A_0}^{\bL} A_1 \to A_1$ is an isomorphism in $\Derba_{\cC}(A_0)$.

\vspace{0.1in}
Suppose that $R$ is isolated at $0$. Let $(K,j) \in \Tpl/X$, where $\Tpl/X$ denotes the slice category of $\Tpl$ over $X$. We regard $\rCfin(K,R)$ as an object of $\Alg_{\cC}(\rCfin(X,R))$ through $\Gamma_{\cD_2}(j)$. We say that $j$ is a $\zeta$-embedding if it is an $\NBAb$-embedding in the sense of \cite{BM21} \S 3.1, i.e.\ if $\zeta(j) \colon \zeta(K) \to \zeta(X)$ is a homeomorphism onto the image. We give an isolated analogue of \cite{BM21} Lemma 3.9.

\begin{thm}
\label{embedding corresponds to homotopy epimorphism}
If $R$ is isolated at $0$, then the following are equivalent:
\begin{itemize}
\item[(i)] The morphism $\Gamma_{\cD_2}(j)$ is a homotopy epimorphism in $\Alg_{\cC}(R)$.
\item[(ii)] The morphism $\Gamma_{\cD_2}(j)$ is a strict epimorphism in $\Mod_{\cC}(R)$.
\item[(iii)] The map $j$ is a $\zeta$-embedding or $R = \ens{0}$.
\end{itemize}
\end{thm}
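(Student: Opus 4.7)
The plan is to establish the cyclic chain of implications (i) $\Rightarrow$ (iii) $\Rightarrow$ (ii) $\Rightarrow$ (i), after disposing at the outset of the degenerate case $R = \ens{0}$, in which all three algebras are zero and the statements hold vacuously. Assuming now $R \neq \ens{0}$ so that $\cM(R) \neq \emptyset$, to prove (i) $\Rightarrow$ (iii) I would first extract from the homotopy epimorphism hypothesis the underlying statement that $\Gamma_{\cD_2}(j)$ is an epimorphism in $\Alg_{\cC}(R)$, by taking zeroth cohomology of the derived isomorphism $\rCfin(K, R) \wh{\otimes}_{\rCfin(X, R)}^{\bL} \rCfin(K, R) \xrightarrow{\cong} \rCfin(K, R)$ and observing that the ordinary multiplication is therefore already an isomorphism. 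It is a standard feature of commutative Banach rings that such an epimorphism induces an injective map on Berkovich spectra (two points with identical restriction to the source pair up into a character of the tensor product, which is identified with the target via multiplication). A continuous injection between compact Hausdorff spaces is automatically a closed embedding, and by the naturality recorded in Corollary \ref{Banaschweski compatification functor} this embedding is identified with $\zeta(j) \times \id_{\cM(R)}$; the non-emptiness of $\cM(R)$ then forces $\zeta(j)$ itself to be a closed embedding, i.e.\ $j$ to be a $\zeta$-embedding.

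For (iii) $\Rightarrow$ (ii), Proposition \ref{extension property} identifies $\Gamma_{\cD_2}(j)$ with the restriction map $\rC(\zeta(X), R) \to \rC(\zeta(K), R)$ along the closed embedding $\zeta(j) \colon \zeta(K) \hookrightarrow \zeta(X)$. Given any $g \in \rC(\zeta(K), R)$, Proposition \ref{idempotent generates C(X,R)} (ii) writes $g = \sum_i 1_{\zeta(K), R, V_i} r_i$ for a finite disjoint clopen partition $\ens{V_i}$ of $\zeta(K)$. Using normality of $\zeta(X)$ together with its total disconnectedness, each $V_i$ extends to a clopen $\widetilde{V}_i \subset \zeta(X)$ with $\widetilde{V}_i \cap \zeta(K) = V_i$; replacing $\widetilde{V}_i$ by $\widetilde{V}_i \setminus \bigcup_{k < i} \widetilde{V}_k$ renders the family pairwise disjoint while preserving the traces on $\zeta(K)$, and then $\widetilde{g} \coloneqq \sum_i 1_{\zeta(X), R, \widetilde{V}_i} r_i \in \rC(\zeta(X), R)$ restricts to $g$ and satisfies $\n{\widetilde{g}}_{\sup} = \max_i \n{r_i}_R = \n{g}_{\sup}$. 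This isometric set-theoretic section is exactly what is needed to conclude that $\Gamma_{\cD_2}(j)$ is a strict epimorphism in $\Mod_{\cC}(R)$.

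For (ii) $\Rightarrow$ (i), I would analyse the kernel $I$ of $\Gamma_{\cD_2}(j)$, which coincides with the ideal $I_{X, R, j(K)}$ of elements of $\rCfin(X, R)$ vanishing on $j(K)$. By Proposition \ref{I is ind-projective}, every finitely generated $\rCfin(X, R)$-submodule of $I$ is contained in a principal ideal $(e_U)$ with $e_U = 1_{X, R, U}$ an idempotent satisfying $e_U|_{j(K)} = 0$. For such an $e_U$, the internal direct-sum decomposition $\rCfin(X, R) = e_U \rCfin(X, R) \oplus (1 - e_U) \rCfin(X, R)$ in $\Mod_{\cC}(\rCfin(X, R))$ exhibits $(e_U)$ as projective and gives $(e_U) \wh{\otimes}_{\rCfin(X, R)}^{\bL} \rCfin(K, R) = e_U \cdot \rCfin(K, R) = 0$, since $e_U$ is carried to zero by $\Gamma_{\cD_2}(j)$. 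Expressing $I$ as the filtered colimit of such principal ideals and passing this vanishing through the colimit yields $I \wh{\otimes}_{\rCfin(X, R)}^{\bL} \rCfin(K, R) \simeq 0$ in $\Derba_{\cC}(\rCfin(X, R))$, which together with the strictly exact sequence $0 \to I \to \rCfin(X, R) \to \rCfin(K, R) \to 0$ gives the desired isomorphism $\rCfin(K, R) \wh{\otimes}_{\rCfin(X, R)}^{\bL} \rCfin(K, R) \simeq \rCfin(K, R)$. The hardest point will be precisely this colimit step: $I$ is a closed submodule of $\rCfin(X, R)$ that is only an ind-colimit of the principal ideals $(e_U)$ and not their categorical colimit in $\Mod_{\cC}(\rCfin(X, R))$, so one must leverage the full flatness consequences of the idempotent structure uncovered in Proposition \ref{I is ind-projective} to exchange the derived tensor product with the colimit.
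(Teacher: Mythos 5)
Your overall architecture matches the paper's (from (i) extract an epimorphism and deduce injectivity of $\zeta(j)$; from (iii) get an isometric extension property, hence a strict epimorphism; from (ii) get (i) by analysing the kernel ideal $I = I_{X,R,j(K)}$ through its idempotent structure), and your (iii) $\Rightarrow$ (ii) argument is a correct, self-contained version of the paper's Lemma \ref{Tietze}. However, the implication (ii) $\Rightarrow$ (i) as you propose it has a genuine gap, and it sits exactly where you flag the ``hardest point''. Writing $I$ as the increasing union of the principal ideals generated by the $1_{X,R,U}$ is correct (Proposition \ref{I is ind-projective}), and each such ideal is a projective direct summand killed by $(\cdot) \wh{\otimes}_{\rCfin(X,R)} \rCfin(K,R)$; but in $\Mod_{\cC}(\rCfin(X,R))$ the derived tensor product does not commute with filtered colimits (projective resolutions do not pass to colimits, and filtered colimits are not exact in this quasi-abelian category), so the vanishing of $I \wh{\otimes}_{\rCfin(X,R)}^{\bL} \rCfin(K,R)$ does not follow from the vanishing on each stage. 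The paper never proves this vanishing. Instead it proves that $I$ itself is strongly flat (Lemma \ref{flatness of I}) by direct norm estimates: Lemma \ref{approximation by idempotent} approximates any element of $I \wh{\otimes}_{\rCfin(X,R)} M$ up to $\epsilon$ by an elementary tensor $1_{X,R,U} \otimes m$, Lemma \ref{I is stably strict monomorphism} uses this to show that $I \wh{\otimes}_{\rCfin(X,R)} M \to M$ is a strict monomorphism for every $M$, and Lemma \ref{flatness of I} combines these in a two-term $\epsilon$ estimate. Then the two-term complex $I \to \rCfin(X,R)$ is a termwise strongly flat resolution of $\rCfin(K,R)$ (condition (ii) is what makes it a resolution), and Lemma \ref{computation of derived tensor} together with Lemma \ref{stably exactness} identifies the derived tensor with the underived one; \cite{BM21} Proposition 1.8 then closes the argument. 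These quantitative estimates are the real content of the implication and are absent from your proposal; the colimit exchange you invoke in their place is not available in this setting.

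Separately, your (i) $\Rightarrow$ (iii) runs through Berkovich spectra: you assert that an epimorphism in $\Alg_{\cC}(R)$ induces an injection on $\cM$. Making that precise requires showing that two points of $\cM(\rCfin(K,R))$ with the same image in $\cM(\rCfin(X,R))$ lift to a point of $\cM(\rCfin(K,R) \wh{\otimes}_{\rCfin(X,R)} \rCfin(K,R))$, i.e.\ a surjectivity statement onto the fibre product of spectra, which rests on the non-emptiness of the spectrum of completed tensor products of completed residue fields over general Banach rings; this is not established in the paper and is not ``standard'' at this level of generality. The paper's route is more elementary: a homotopy epimorphism is an epimorphism (\cite{BM21} Proposition 1.7), hence pre-composition is injective on $\Hom_{\Alg_{\cC}(R)}(-,R)$, and Proposition \ref{Gelfand transform} embeds $\zeta(K)$ and $\zeta(X)$ compatibly into these Hom-sets when $R \neq \ens{0}$; compactness then upgrades the injectivity of $\zeta(j)$ to a homeomorphism onto its image. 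You should either supply the fibre-product surjectivity or switch to this $R$-valued character argument.
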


In order to verify Theorem \ref{embedding corresponds to homotopy epimorphism}, we show several lemmata. First, we show an isolated analogue of \cite{BM21} Lemma 2.9 and \cite{BM21} Lemma 2.22.

\begin{lmm}
\label{ker(j) is ind-projective}
Every finitely generated $\rCfin(X,R)$-submodule of $\set{f \in \rCfin(X,R)}{f \circ j = 0}$ is contained in a principal ideal generated by $1_{X,R,U}$ for some $U \in \CO(X)$ with $j(K) \cap U = \emptyset$.
\end{lmm}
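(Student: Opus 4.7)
The plan is to reduce this lemma directly to Proposition \ref{I is ind-projective}. Observe that for any $f \in \rCfin(X,R)$, the condition $f \circ j = 0$ is equivalent to $f(j(k)) = 0$ for every $k \in K$, which is equivalent to $f|_{j(K)} = 0$. Therefore the $R$-submodule
\begin{eqnarray*}
\set{f \in \rCfin(X,R)}{f \circ j = 0}
\end{eqnarray*}
coincides, as an $\rCfin(X,R)$-submodule of $\rCfin(X,R)$, with $I_{X,R,j(K)} = \set{f \in \rCfin(X,R)}{f|_{j(K)} = 0}$, where $j(K)$ is regarded as a subset of $X$.

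Now I would invoke Proposition \ref{I is ind-projective} with the subset $X_0 \coloneqq j(K) \subset X$. That proposition yields, for every finitely generated $\rCfin(X,R)$-submodule of $I_{X,R,j(K)}$, a clopen subset $U \in \CO(X)$ with $j(K) \cap U = \emptyset$ such that the submodule is contained in the principal ideal $1_{X,R,U} \rCfin(X,R)$. This is precisely the conclusion of Lemma \ref{ker(j) is ind-projective}.

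There is no real obstacle here: the statement is a purely notational reformulation of Proposition \ref{I is ind-projective}, using that the preimage description $\{f \circ j = 0\}$ translates into the vanishing description $\{f|_{j(K)} = 0\}$ for the image subset $j(K) \subset X$. No assumption on $j$ (such as being a $\zeta$-embedding, or injective, or continuous in a stronger sense) is needed at this stage; only the set-theoretic image of $j$ enters the argument.
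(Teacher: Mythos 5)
Your proof is correct and is essentially identical to the paper's: both identify $\set{f \in \rCfin(X,R)}{f \circ j = 0}$ with $I_{X,R,j(K)}$ and then apply Proposition \ref{I is ind-projective} to the subset $X_0 = j(K)$. Nothing further is needed.
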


\begin{proof}
The assertion follows from Proposition \ref{I is ind-projective} applied to the subset $j(K)$ and the corresponding ideal $\set{f \in \rCfin(X,R)}{f \circ j = 0} = \set{f \in \rCfin(X,R)}{\forall x \in K, f(j(x)) = 0} = I_{X,R,j(K)}$.
\end{proof}

We denote by $\cC_{\leq 1} \subset \cC$ the symmetric monoidal subcategory with the same class of objects and the class of morphisms consisting of morphisms of operator norm $\leq 1$. We show an isolated analogue of \cite{BM21} Lemma 2.13 and \cite{BM21} Lemma 2.24.

\begin{lmm}
\label{Tietze}
Suppose that $R$ is isolated at $0$ and $j$ is a $\zeta$-embedding.
\begin{itemize}
\item[(i)] For any $f \in \rCfin(K,R)$, there exists an $\tl{f} \in \rCfin(X,R)$ such that $\tl{f} \circ j = f$ and $\n{\tl{f}}_{\rCfin(X,R)} = \n{f}_{\rCfin(K,R)}$.
\item[(ii)] The morphism $\Gamma_{\cD_2}(j)$ is a strict epimorphism in $\Mod_{\cC}(\rCfin(X,R))$.
\item[(iii)] If $R$ is a monoid object of $\cC_{\leq 1}$, then the morphism $\Gamma_{\cD_2}(j)$ is a strict epimorphism in $\Mod_{\cC_{\leq 1}}(\rCfin(X,R))$.
\end{itemize}
\end{lmm}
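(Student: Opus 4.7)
The plan is to prove (i) by an explicit construction of a norm-preserving extension $\tl{f}$, and then derive (ii) and (iii) as immediate formal consequences, since the existence of norm-preserving set-theoretic lifts characterises strict epimorphisms both in $\Mod_{\cC}(\rCfin(X,R))$ and, under the extra hypothesis, in $\Mod_{\cC_{\leq 1}}(\rCfin(X,R))$.

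For part (i), I would first apply Proposition \ref{extension property} with $M = R$ (which is isolated at $0$ by hypothesis) to identify $f \in \rCfin(K,R)$ with the unique $\wt{f} \in \rC(\zeta(K),R)$ satisfying $\wt{f} \circ \iota_{\zeta,K} = f$. Proposition \ref{idempotent generates C(X,R)} (ii), applied to $\zeta(K) \in \TDCH$, yields a finite disjoint clopen cover $\cV \subset \CO(\zeta(K)) \setminus \ens{\emptyset}$ together with a map $e \colon \cV \to R$ such that $\wt{f} = \sum_{V \in \cV} 1_{\zeta(K),R,V} e(V)$. Since $j$ is a $\zeta$-embedding, $Z \coloneqq \zeta(j)(\zeta(K))$ is a homeomorphic copy of $\zeta(K)$ that is closed in $\zeta(X) \in \TDCH$. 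Using the standard fact that disjoint closed subsets of a totally disconnected compact Hausdorff space are separated by a clopen, I inductively produce pairwise disjoint clopens $\ens{\wt{V}}_{V \in \cV}$ of $\zeta(X)$ with $\wt{V} \cap Z = \zeta(j)(V)$, by extending the $V$'s one at a time and subtracting from each new extension the union of those already constructed (this preserves the prescribed intersection with $Z$ because the $V$'s are pairwise disjoint). Corollary \ref{extension property for clopen subset} then yields unique $U_V \in \CO(X)$ with $\cl_{\zeta,X}(U_V) = \wt{V}$, and I set $\tl{f} \coloneqq \sum_{V \in \cV} 1_{X,R,U_V} e(V) \in \rCfin(X,R)$. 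A direct check gives $\tl{f} \circ j = f$: for $k \in K$, the point $\iota_{\zeta,X}(j(k)) = \zeta(j)(\iota_{\zeta,K}(k))$ belongs to $\wt{V}$ if and only if $\iota_{\zeta,K}(k) \in V$, by injectivity of $\zeta(j)$ and the identity $\wt{V} \cap Z = \zeta(j)(V)$. For the norm, $\tl{f}$ takes values in $\ens{0} \cup \set{e(V)}{V \in \cV}$, and each $e(V)$ is already attained by $f$ because $\iota_{\zeta,K}(K)$ is dense in $\zeta(K)$ and meets the nonempty open set $V$; hence $\n{\tl{f}}_{\rCfin(X,R)} = \max_{V \in \cV} \n{e(V)}_R = \n{f}_{\rCfin(K,R)}$ if $f \not\equiv 0$, and otherwise $\tl{f} = 0$ works trivially.

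For (ii), note that $\Gamma_{\cD_2}(j)$ is pre-composition by $j$, an $\rCfin(X,R)$-linear morphism of operator norm at most $1$. By (i) it is surjective and admits a norm-preserving section as a map of normed sets, so the quotient norm on $\rCfin(X,R)/\ker(\Gamma_{\cD_2}(j))$ coincides with $\n{\cdot}_{\rCfin(K,R)}$. The induced morphism from the cokernel object to $\rCfin(K,R)$ is therefore an isometric isomorphism, making $\Gamma_{\cD_2}(j)$ a strict epimorphism in $\Mod_{\cC}(\rCfin(X,R))$. For (iii), the hypothesis that $R$ is a monoid of $\cC_{\leq 1}$ promotes $\rCfin(X,R)$ to a monoid of $\cC_{\leq 1}$ and makes both the source and the target of $\Gamma_{\cD_2}(j)$ into objects of $\Mod_{\cC_{\leq 1}}(\rCfin(X,R))$; the same argument, now interpreted in $\cC_{\leq 1}$ where strict epimorphism precisely requires that the induced map from the quotient be an isometric isomorphism, applies verbatim.

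The principal technical step, and the one I expect to require the most care, is the inductive production of the pairwise disjoint clopens $\wt{V}$ in $\zeta(X)$ whose intersections with $Z$ recover the prescribed $\zeta(j)(V)$. This combines the total disconnectedness of $\zeta(X)$ (which provides the clopen separation of disjoint closed subsets) with the closed-embedding property of $\zeta(j)$ (which ensures that pulling back an extension of a clopen of $Z$ really recovers that clopen). Once this extension-of-clopens statement is in place, everything else is formal: the algebraic identity $\tl{f} \circ j = f$ is a routine unwinding, the norm equality falls out of the finite range of $\tl{f}$, and (ii) and (iii) reduce to the definition of strict epimorphism in the respective quasi-Abelian categories.
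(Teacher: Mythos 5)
Your proof is correct, and for the key assertion (i) it takes a genuinely different route from the paper. The paper identifies $f$ with $g \in \rC(\zeta(K),R)$ via Proposition \ref{extension property} and then invokes the general extension lemma \cite{BM21} Lemma 2.13, which only gives $\n{\tl{g}} \leq \n{g} + \epsilon$; it then upgrades this to exact norm preservation using the isolation of $R$ at $0$ and the strictness of $\Gamma_{\cD_2}(\zeta(j))$. You instead construct the extension by hand: decompose $\wt{f}$ via the finite disjoint clopen cover of Proposition \ref{idempotent generates C(X,R)} (ii) (legitimate here since $\rC(\zeta(K),R) = \rCfin(\zeta(K),R)$ by Proposition \ref{Cfin is closed} (ii)), extend each clopen of $Z = \zeta(j)(\zeta(K))$ to a clopen of $\zeta(X)$ by the separation property of totally disconnected compact Hausdorff spaces (this is exactly \cite{BM21} Lemma 2.6, which the paper itself uses elsewhere), disjointify inductively, and pull back along $\iota_{\zeta,X}$ via Corollary \ref{extension property for clopen subset}. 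Your route is more self-contained and yields exact norm equality directly, without the $\epsilon$-argument; the paper's route is shorter given the machinery already established in \cite{BM21}. The deduction of (ii) and (iii) from (i) — the quotient norm on $\rCfin(X,R)/\ker(\Gamma_{\cD_2}(j))$ coincides with $\n{\cdot}_{\rCfin(K,R)}$ because precomposition by $j$ is contracting and admits a norm-preserving set-theoretic section — is the same formal step the paper leaves implicit, and your spelled-out version is accurate.
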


\begin{proof}
The assertions (ii) and (iii) immediately follow from the assertion (i). We show the assertion (i). Take an $\epsilon \in (0,\infty)$ with $\ens{0} = \set{a \in R}{\n{a}_R < \epsilon}$. By Proposition \ref{extension property}, there exists a unique $g \in \rC(\zeta(K),R)$ such that $g \circ \iota_{\zeta,K} = f$ and $\n{g}_{\rC(\zeta(K),R)} = \n{f}_{\rCfin(K,R)}$. Since $j$ is a $\zeta$-embedding, there exists a $\tl{g} \in \rC(\zeta(X),R)$ such that $\tl{g} \circ \zeta(j) = g$ and $\n{\tl{g}}_{\rC(\zeta(X),R)} \leq \n{g}_{\rC(\zeta(K),R)} + \epsilon$ by \cite{BM21} Lemma 2.13. By the strictness of $\Gamma_{\cD_2}(\zeta(j))$ and the arbitrariness of $\epsilon$, we obtain $\n{\tl{g}}_{\rC(\zeta(X),R)} = \n{g}_{\rC(\zeta(K),R)}$.

\vspace{0.1in}
Put $\tl{f} \coloneqq \tl{g} \circ \iota_{\zeta,X} \in \rCfin(X,R)$. For any $x \in K$, we have $\tl{f}(j(x)) = \tl{g}(\iota_{\zeta,X}(j(x))) = \tl{g}(\zeta(j)(\iota_{\zeta,K}(x))) = g(\iota_{\zeta,K}(x)) = f(x)$ because $\iota_{\zeta,\bullet}$ is a natural transform $\id_{\CH} \Rightarrow \TDCH$. This implies $\tl{f} \circ j = f$ and $\n{\tl{f}}_{\rCfin(X,R)} \geq \n{f}_{\rCfin(K,R)}$. By $\n{\tl{f}}_{\rCfin(X,R)} \leq \n{\tl{g}}_{\rC(\zeta(X),R)} = \n{g}_{\rC(\zeta(K),R)} = \n{f}_{\rCfin(K,R)}$, we obtain $\n{\tl{f}}_{\rCfin(X,R)} = \n{f}_{\rCfin(K,R)}$.
\end{proof}

From now on and until the end of this section, we use the notation $I \coloneqq \ker(\Gamma_{\cD_2}(j))$ and $i \colon I \hookrightarrow \rCfin(X,R)$ for the canonical inclusion.

\begin{lmm}
\label{approximation by idempotent}
Suppose that $R$ is isolated at $0$. Let $M \in \Mod_{\cC}(\rCfin(X,R))$. For any $(\tl{m},\epsilon) \in (I \wh{\otimes}_{\rCfin(X,R)} M) \times (0,\infty)$, there exists a $(U,m) \in \CO(X) \times M$ such that $j(K) \cap U = \emptyset$ and $\n{\tl{m} - 1_{X,R,U} \otimes m}_{I \wh{\otimes}_{\rCfin(X,R)} M} < \epsilon$.
\end{lmm}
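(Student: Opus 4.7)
The plan is to reduce the approximation to a single ``principal'' simple tensor $1_{X,R,U}\otimes m$ in two steps: first approximate $\tl{m}$ in the completion by a finite algebraic sum of simple tensors, and then contract that sum into one simple tensor by factoring out a common characteristic function on the $\rCfin(X,R)$-side.

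First, I would use that $I \wh{\otimes}_{\rCfin(X,R)} M$ is by construction the completion of the algebraic tensor product $I \otimes_{\rCfin(X,R)} M$ with respect to the (projective or maximum) tensor seminorm recalled in \S \ref{Preliminaries}. Density of the image of $I \otimes_{\rCfin(X,R)} M$ in $I \wh{\otimes}_{\rCfin(X,R)} M$ then produces a finite representative $\sum_{i=1}^{n} f_i \otimes m_i$ with $f_i \in I$ and $m_i \in M$ satisfying $\n{\tl{m} - \sum_{i=1}^{n} f_i \otimes m_i}_{I \wh{\otimes}_{\rCfin(X,R)} M} < \epsilon$.

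Next, since $I = \ker(\Gamma_{\cD_2}(j))$ coincides with the ideal $I_{X,R,j(K)}$, the finitely many elements $f_1,\ldots,f_n$ span a finitely generated $\rCfin(X,R)$-submodule of $I_{X,R,j(K)}$. Lemma \ref{ker(j) is ind-projective} then supplies a $U \in \CO(X)$ with $j(K) \cap U = \emptyset$ and elements $g_1,\ldots,g_n \in \rCfin(X,R)$ such that $f_i = 1_{X,R,U}\, g_i$ for every $i$. Using the balancing relation in the $\rCfin(X,R)$-tensor product, I can compute
\[
\sum_{i=1}^{n} f_i \otimes m_i = \sum_{i=1}^{n} (1_{X,R,U}\, g_i) \otimes m_i = 1_{X,R,U} \otimes \sum_{i=1}^{n} g_i m_i,
\]
and setting $m \coloneqq \sum_{i=1}^{n} g_i m_i \in M$ produces a single simple tensor $1_{X,R,U} \otimes m$ that already lives in $I \otimes_{\rCfin(X,R)} M$, because $U \cap j(K) = \emptyset$ forces $1_{X,R,U} \in I$. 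This tensor agrees identically with the algebraic approximation from the first step, so the desired estimate $\n{\tl{m} - 1_{X,R,U} \otimes m} < \epsilon$ is inherited from that step.

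The argument is essentially formal once Lemma \ref{ker(j) is ind-projective} is available; the only real point to watch is that the collapsing step uses the $\rCfin(X,R)$-module structure on $M$, not merely the $R$-module structure, so that the balancing $(1_{X,R,U}\, g_i) \otimes m_i = 1_{X,R,U} \otimes g_i m_i$ is legitimate inside the tensor product over $\rCfin(X,R)$ (which is the setting of the statement). No further analytic complications arise, because density of algebraic tensors in the completion is immediate from the definition and no Cauchy-sequence manipulation of $\tl{m}$ is required.
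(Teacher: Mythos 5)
Your proof is correct and follows essentially the same route as the paper: approximate $\tl{m}$ by a finite algebraic sum of simple tensors, invoke Lemma \ref{ker(j) is ind-projective} to place the $f_i$ in a principal ideal $(1_{X,R,U})$ with $U$ disjoint from $j(K)$, and collapse the sum via the balancing relation over $\rCfin(X,R)$. The only cosmetic difference is that the paper uses the identity $1_{X,R,U} f_h = f_h$ (so $g_h = f_h$ and $m = \sum_h f_h n_h$) rather than an abstract factorisation $f_i = 1_{X,R,U} g_i$, which changes nothing of substance.
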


\begin{proof}
By the definition of $\wh{\otimes}_{\rCfin(X,R)}$, there exists a tuple $(k,f,n)$ of a $k \in \N$, an $f = (f_h)_{h=0}^{k} \in I^{k+1}$, and an $n = (n_h)_{h=0}^{k} \in M^{k+1}$ such that $\n{\tl{m} - \sum_{h=0}^{k} f_h \otimes n_h}_{I \wh{\otimes}_{\rCfin(X,R)} M} < \epsilon$. By Lemma \ref{ker(j) is ind-projective}, there exists a $U \in \CO(X)$ such that $1_{X,R,U} f_h = f_h$ and $j(K) \cap U = \emptyset$ for any $0 \leq h \leq k$. Put $m \coloneqq \sum_{h=0}^{k} f_h n_h \in M$. We have
\begin{eqnarray*}
\sum_{h=0}^{k} f_h \otimes n_h = \sum_{h=0}^{k} 1_{X,R,U} f_h \otimes n_h = \sum_{h=0}^{k} 1_{X,R,U} \otimes f_h n_h = 1_{X,R,U} \otimes \sum_{h=0}^{k} f_h n_h = 1_{X,R,U} \otimes m
\end{eqnarray*}
in $I \otimes_{\rCfin(X,R)} M$, and hence $\n{\tl{m} - 1_{X,R,U} \otimes m}_{I \wh{\otimes}_{\rCfin(X,R)} M} < \epsilon$.
\end{proof}

\begin{lmm}
\label{I is stably strict monomorphism}
Suppose that $R$ is isolated at $0$. Let $M \in \Mod_{\cC}(\rCfin(X,R))$. Then, the morphism $i_M \colon I \wh{\otimes}_{\rCfin(X,R)} M \to M$ in $\Mod_{\cC}(\rCfin(X,R))$ induced by the inclusion $i \colon I \hookrightarrow \rCfin(X,R)$ is a strict monomorphism.
\end{lmm}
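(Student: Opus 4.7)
The plan is to use Lemma \ref{approximation by idempotent} to reduce, for any $\tl{m} \in I \wh{\otimes}_{\rCfin(X,R)} M$, to the case of a single elementary tensor $1_{X,R,U} \otimes m$ with $j(K) \cap U = \emptyset$, and then exploit the idempotency $1_{X,R,U}^2 = 1_{X,R,U}$ together with $\rCfin(X,R)$-bilinearity of the tensor product to obtain a reverse estimate bounding the tensor norm by the norm of the image in $M$. The decisive algebraic identity is
\[
1_{X,R,U} \otimes m \ = \ (1_{X,R,U} \cdot 1_{X,R,U}) \otimes m \ = \ 1_{X,R,U} \otimes (1_{X,R,U} \cdot m),
\]
where the outer $1_{X,R,U}$ is read in $I$ (which uses $j(K) \cap U = \emptyset$) and the inner one acts via the $\rCfin(X,R)$-module structure on $M$. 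Since $i_M(1_{X,R,U} \otimes m) = 1_{X,R,U} \cdot m$ by construction, the submultiplicativity of the projective tensor seminorm yields
\[
\|1_{X,R,U} \otimes m\|_{I \wh{\otimes}_{\rCfin(X,R)} M} \ \leq \ \|1_{X,R,U}\|_I \cdot \|1_{X,R,U} \cdot m\|_M \ \leq \ \|1_R\|_R \cdot \|i_M(1_{X,R,U} \otimes m)\|_M.
\]

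For arbitrary $\tl{m}$ and $\epsilon > 0$, I would apply Lemma \ref{approximation by idempotent} to choose $(U,m) \in \CO(X) \times M$ with $j(K) \cap U = \emptyset$ and $\|\tl{m} - 1_{X,R,U} \otimes m\|_{I \wh{\otimes} M} < \epsilon$, and then combine the above inequality with the triangle inequality and the trivial bound $\|i_M(\cdot)\|_M \leq \|i_M\|_{\r{op}} \|\cdot\|_{I \wh{\otimes} M}$ to obtain
\[
\|\tl{m}\|_{I \wh{\otimes} M} \ \leq \ \epsilon \bigl( 1 + \|1_R\|_R \cdot \|i_M\|_{\r{op}} \bigr) + \|1_R\|_R \cdot \|i_M(\tl{m})\|_M.
\]
Letting $\epsilon \to 0$ gives $\|\tl{m}\|_{I \wh{\otimes} M} \leq \|1_R\|_R \cdot \|i_M(\tl{m})\|_M$ (the degenerate case $R = \ens{0}$ is trivial since then $I \wh{\otimes} M = 0$).

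Combined with the obvious upper bound $\|i_M(\tl{m})\|_M \leq \|i_M\|_{\r{op}} \cdot \|\tl{m}\|_{I \wh{\otimes} M}$, this reverse estimate simultaneously delivers injectivity of $i_M$ and the equivalence between the norm on $I \wh{\otimes} M$ and the restriction of the norm of $M$ via $i_M$; in other words, $i_M$ is an injective topological embedding onto its image, which is exactly the characterisation of a strict monomorphism in the quasi-abelian category $\Mod_{\cC}(\rCfin(X,R))$. The main subtlety I anticipate is purely conceptual rather than technical: tensoring with $M$ does not preserve monomorphisms in general, and the point of the proof is that the idempotent generators $1_{X,R,U}$ of $I$ supplied by Lemma \ref{approximation by idempotent} play the role usually played by flatness, which is the structural feature that makes this work in the absence of Banach's Open Mapping Theorem for $R$-modules.
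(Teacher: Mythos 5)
Your proposal is correct and follows essentially the same route as the paper: both use Lemma \ref{approximation by idempotent} to approximate $\tl{m}$ by an elementary tensor $1_{X,R,U} \otimes m$, exploit the idempotent identity $1_{X,R,U} \otimes m = 1_{X,R,U} \otimes 1_{X,R,U} m$ and the cross-norm inequality to get $\n{1_{X,R,U} \otimes m} \leq \n{1}_R \n{i_M(1_{X,R,U} \otimes m)}_M$, and then close the estimate with the triangle inequality and the operator norm of $i_M$. The only difference is presentational: the paper argues by contradiction with $\epsilon$ equal to half the putative gap, while you derive the bound $\n{\tl{m}} \leq \n{1}_R \n{i_M(\tl{m})}_M$ directly by letting $\epsilon \to 0$.
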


\begin{proof}
We abbreviate $I \wh{\otimes}_{\rCfin(X,R)} M$ to $M_I$. It suffices to show $\n{\tl{m}}_{M_I} \leq \n{1}_R \n{i_M(\tl{m})}_M$ for any $\tl{m} \in M_I$. Assume $\n{\tl{m}}_{M_I} > \n{1}_R \n{i_M(\tl{m})}_M$. Put $\epsilon \coloneqq 2^{-1}(\n{\tl{m}}_{M_I} - \n{1}_R \n{i_M(\tl{m})}_M) \in (0,\infty)$. We denote by $C_{i_M}$ the operator norm of $i_M$. By Lemma \ref{approximation by idempotent}, there exists a $(U,m) \in \CO(X) \times M$ such that $\n{\tl{m} - 1_{X,R,U} \otimes m}_{M_I} < (\n{1}_R C_{i_M} + 1)^{-1} \epsilon$ and $j(K) \cap U = \emptyset$. Put $\delta \coloneqq \tl{m} - 1_{X,R,U} \otimes m \in M_I$. We have
\begin{eqnarray*}
& & \n{\tl{m}}_{M_I} = \n{1_{X,R,U} \otimes m + \delta}_{M_I} = \n{1_{X,R,U} \otimes 1_{X,R,U} m + \delta}_{M_I} \\
& \leq & \n{1_{X,R,U} \otimes 1_{X,R,U} m}_{M_I} + \n{\delta}_{M_I} \leq \n{1_{X,R,U}}_I \n{1_{X,R,U} m}_M + \n{\delta}_{M_I} \\
& \leq & \n{1}_R \n{1_{X,R,U} m}_M + \n{\delta}_{M_I} = \n{1}_R \n{i_M(1_{X,R,U} \otimes m)}_M + \n{\delta}_{M_I} \\
& = & \n{1}_R \n{i_M(\tl{m} - \delta)}_M + \n{\delta}_{M_I} \leq \n{1}_R (\n{i_M(\tl{m})}_M + \n{i_M(\delta)}_M) + \n{\delta}_{M_I} \\
& \leq & \n{1}_R \n{i_M(\tl{m})}_M + (\n{1}_R C_{i_M} + 1) \n{\delta}_{M_I} < \n{1}_R \n{i_M(\tl{m})}_M + \epsilon,
\end{eqnarray*}
but this contradicts the definition of $\epsilon$.
\end{proof}

We show an isolated analogue of \cite{BM21} Lemma 2.22.

\begin{lmm}
\label{flatness of I}
If $R$ is isolated at $0$, then the objects $I$ and $\rCfin(X,R)$ of $\Mod_{\cC}(\rCfin(X,R))$ are strongly flat.
\end{lmm}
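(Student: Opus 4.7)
The strong flatness of $\rCfin(X,R)$ is tautological: the functor $(\cdot) \wh{\otimes}_{\rCfin(X,R)} \rCfin(X,R)$ is naturally isomorphic to the identity on $\Mod_{\cC}(\rCfin(X,R))$, and the identity preserves strict kernels. The plan for $I$ is to realise it as a filtered union of direct summands of $\rCfin(X,R)$, each of which is manifestly strongly flat, and then to transfer strong flatness along the union using the strict monomorphism and approximation technology of Lemmata \ref{I is stably strict monomorphism} and \ref{approximation by idempotent}.

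For each $U$ in the directed set $\mathcal{J} \coloneqq \set{U \in \CO(X)}{j(K) \cap U = \emptyset}$, the orthogonal idempotents $1_{X,R,U}$ and $1_{X,R,X \setminus U} = 1 - 1_{X,R,U}$ yield an internal decomposition $\rCfin(X,R) = I_U \oplus I_{X \setminus U}$ with $I_U \coloneqq 1_{X,R,U} \rCfin(X,R)$. Hence $I_U$ is a direct summand of the free rank-one module, and so is projective and strongly flat; moreover, for any $M \in \Mod_{\cC}(\rCfin(X,R))$ the natural map $I_U \wh{\otimes}_{\rCfin(X,R)} M \to 1_{X,R,U} M$ is an isomorphism in $\Mod_{\cC}(\rCfin(X,R))$. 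The argument of Lemma \ref{ker(j) is ind-projective} applied to each $f \in I$ (taking $U_f \coloneqq f^{-1}(R \setminus \ens{0})$) then shows that $I = \bigcup_{U \in \mathcal{J}} I_U$ as $\rCfin(X,R)$-submodules of $\rCfin(X,R)$.

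To finish, given $\phi \colon M \to N$ in $\Mod_{\cC}(\rCfin(X,R))$ with strict kernel $K' \hookrightarrow M$, the plan is to show that the natural map $I \wh{\otimes}_{\rCfin(X,R)} K' \to \ker(I \wh{\otimes}_{\rCfin(X,R)} \phi)$ is an isomorphism. Lemma \ref{I is stably strict monomorphism} identifies $I \wh{\otimes}_{\rCfin(X,R)} (\cdot)$ with a closed sub-$\rCfin(X,R)$-module of $(\cdot)$ carrying the induced norm; combined with Lemma \ref{approximation by idempotent}, this realises $I \wh{\otimes}_{\rCfin(X,R)} M$ as the closure in $M$ of $\bigcup_{U \in \mathcal{J}} 1_{X,R,U} M$, with analogous identifications for $N$ and $K'$, and makes $I \wh{\otimes}_{\rCfin(X,R)} \phi$ the restriction of $\phi$. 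The strong flatness of each $I_U$ gives the pointwise identity $1_{X,R,U} K' = K' \cap 1_{X,R,U} M$; to pass this through to the closures, observe that if $x \in K'$ is approximated in $M$ by a sequence $x_n \in 1_{X,R,U_n} M$, then $x$ is also the limit of $1_{X,R,U_n} x \in 1_{X,R,U_n} K'$, using that scalar multiplication by $1_{X,R,U_n}$ has operator norm uniformly bounded in $n$ by $\n{1}_R$ times the norm of the action map.

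The main obstacle is that strong flatness in this quasi-abelian setting requires preservation of strict kernels, not merely algebraic kernels. Since Banach's Open Mapping Theorem is unavailable over $R$, an algebraic bijection between Banach modules need not be a strict isomorphism; Lemma \ref{I is stably strict monomorphism} provides exactly the upgrade of each $I \wh{\otimes}_{\rCfin(X,R)} (\cdot)$ to a strict subobject of $(\cdot)$ that is needed, after which the kernel computation reduces to a manipulation of closures via the idempotent approximation trick above.
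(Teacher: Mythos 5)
Your proposal is correct and follows essentially the same route as the paper: both arguments rest on Lemma \ref{I is stably strict monomorphism} (to realise $I \wh{\otimes}_{\rCfin(X,R)} M$ as a strict, hence closed, subobject of $M$) together with Lemma \ref{approximation by idempotent} (to approximate elements of $I \wh{\otimes}_{\rCfin(X,R)} M$ by elements of the form $1_{X,R,U} \otimes m$), and then identify the kernel by a density argument. The paper phrases this as an explicit $\epsilon$-estimate showing that the image of $I \wh{\otimes}_{\rCfin(X,R)} s$ is dense in $\ker(I \wh{\otimes}_{\rCfin(X,R)} t)$, whereas you package the same computation as a manipulation of closures of $\bigcup_U 1_{X,R,U} M$; this is a purely presentational difference.
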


\begin{proof}
The strong flatness of $\rCfin(X,R)$ is obvious because it is a monoidal unit with respect to $\wh{\otimes}_{\rCfin(X,R)}$. Let $E$ be a strictly exact sequence
\begin{eqnarray*}
0 \to M \stackrel{s}{\to} N \stackrel{t}{\to} L
\end{eqnarray*}
of $\Mod_{\cC}(\rCfin(X,R))$. Following the convention in Lemma \ref{I is stably strict monomorphism} and its proof, we put $M_I \coloneqq I \wh{\otimes}_{\rCfin(X,R)} M$, $N_I \coloneqq I \wh{\otimes}_{\rCfin(X,R)} N$, and $L_I \coloneqq I \wh{\otimes}_{\rCfin(X,R)} L$, and denote by $i_M \colon M_I \to M$, $i_N \colon N_I \to N$, $i_L \colon L_I \to L$ the morphisms induced by the inclusions. We consider the commutative diagram
\begin{eqnarray*}
\xymatrix@C4em{
0 \ar[r] \ar@{=}[d] & M_I \ar[r]^-{I \wh{\otimes}_{\rCfin(X,R)} s} \ar[d]^-{i_M} & N_I \ar[r]^-{I \wh{\otimes}_{\rCfin(X,R)} t} \ar[d]^-{i_N} & L_I \ar[d]^-{i_L} \\
0 \ar[r] & M \ar[r]^-{s} & N \ar[r]^-{t} & L \\
}
\end{eqnarray*}
of $\Mod_{\cC}(\rCfin(X,R))$. By Lemma \ref{I is stably strict monomorphism}, $i_M$ and $i_N$ are strict monomorphisms. By the strict exactness of $E$, $s$ is a strict monomorphism. By the commutativity of the second square, $I \wh{\otimes}_{\rCfin(X,R)} s$ is a strict monomorphism. Therefore, it suffices to show that for any $(\tl{n},\epsilon) \in \ker(I \wh{\otimes}_{\rCfin(X,R)} t) \times (0,\infty)$, there exists an $\tl{m} \in M_I$ such that $\n{\tl{n} - (I \wh{\otimes}_{\rCfin(X,R)} s)(\tl{m})}_{N_I} < \n{1}_R^{-1} \epsilon$.

\vspace{0.1in}
We denote by $C_N$ the operator norm of the scalar multiplication $\rCfin(X,R) \wh{\otimes} N \to N$, by $C_{N_I}$ the operator norm of the scalar multiplication $\rCfin(X,R) \wh{\otimes} (N_I) \to N_I$, and by $D_{i_N}$ the operator norm of the inverse of the isomorphism
\begin{eqnarray*}
N_I \to \im(i_N)
\end{eqnarray*}
in $\Mod_{\cC}(\rCfin(X,R))$ induced by the strict monomorphism $i_N$. By Lemma \ref{approximation by idempotent}, there exists a $(U,n) \in \CO(X) \times N$ such that $\n{\tl{n} - 1_{X,R,U} \otimes n}_{N_I} < 2^{-1} C_{N_I}^{-1} \n{1}_R^{-1} \epsilon$ and $j(K) \cap U = \emptyset$. By the exactness of $E$ and the commutativity of the diagram, there exists an $m \in M$ such that $\n{i_N(\tl{n}) - s(m)}_N < 2^{-1} D_{i_N}^{-1} C_N^{-1} \n{1}_R^{-1} \epsilon$. Put $\tl{m} \coloneqq 1_{X,R,U} \otimes m \in M_I$. We obtain
\begin{eqnarray*}
& & \n{\tl{n} - (I \wh{\otimes}_{\rCfin(X,R)} s)(\tl{m})}_{N_I} = \n{\tl{n} - 1_{X,R,U} \otimes s(m)}_{N_I} \\
& \leq & \n{(1-1_{X,R,U})\tl{n}}_{N_I} + \n{1_{X,R,U} \tl{n} - 1_{X,R,U} \otimes s(m)}_{N_I} \\
& = & \n{(1-1_{X,R,U})(\tl{n} - 1_{X,R,U} \otimes n)}_{N_I} + \n{1_{X,R,U} \tl{n} - 1_{X,R,U} \otimes s(m)}_{N_I} \\
& \leq & \n{(1-1_{X,R,U})(\tl{n} - 1_{X,R,U} \otimes n)}_{N_I} \\
& & + D_{i_N} \n{(i_N)(1_{X,R,U} \tl{n} - 1_{X,R,U} \otimes s(m))}_N \\
& = & \n{(1-1_{X,R,U})(\tl{n} - 1_{X,R,U} \otimes n)}_{N_I} \\
& & + D_{i_N} \n{1_{X,R,U}((i_N)(\tl{n}) - s(m))}_N \\
& \leq & C_{N_I} \n{1-1_{X,R,U}}_{\rCfin(X,R)} \n{\tl{n} - 1_{X,R,U} \otimes n}_{N_I} \\
& & + D_{i_N} C_N \n{1_{X,R,U}}_{\rCfin(X,R)} \n{(i_N)(\tl{n}) - s(m)}_N \\
& \leq & C_{N_I} \n{1}_R \n{\tl{n} - 1_{X,R,U} \otimes n}_{N_I} + D_{i_N} C_N \n{1}_R \n{i_N(\tl{n}) - s(m)}_N.
\end{eqnarray*}
By the choice of $(U,n)$ and $m$, the right hand side is smaller than $2^{-1} \epsilon + 2^{-1} \epsilon = \epsilon$.
\end{proof}

We denote by $\cP_{\cC}^R$ the projective resolution functor $\Chba_{\cC}(R) \to \Chba_{\cC}(R)$ explicitly introduced in \cite{BM21} at the beginning of p.\ 13. In the next lemma, we perform a basic computation of the derived tensor in a way completely parallel to the algebraic setting.

\begin{lmm}
\label{computation of derived tensor}
Let $A$ be a monoid object of $\cC$. For any pair of morphisms $p_0 \colon P_0 \to M_0$ and $p_1 \colon P_1 \to M_1$ in $\Chba_{\cC}(A)$, if either $P_0$ or $P_1$ is termwise strongly flat and both of $p_0$ and $p_1$ are quasi-isomorphisms, then the span
\begin{eqnarray*}
\xymatrix{
\Tot(\cP_{\cC}^A(P_0) \wh{\otimes}_A \cP_{\cC}^A(P_1)) \ar[r] \ar[d]^-{\Tot(\cP_{\cC}^A(p_0) \wh{\otimes}_A \cP_{\cC}^A(p_1))} & \Tot(P_0 \wh{\otimes}_A P_1) \\
\Tot(\cP_{\cC}^A(M_0) \wh{\otimes}_A \cP_{\cC}^A(M_1)) &
}
\end{eqnarray*}
in $\Chba_{\cC}(A)$ represents both an isomorphism $M_0 \wh{\otimes}_A^{\bL} M_1 \to \Tot(P_0 \wh{\otimes}_A P_1)$ in $\Derba_{\cC}(A)$ and its inverse.
\end{lmm}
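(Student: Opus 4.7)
The plan is to reduce the statement to two standard facts about derived tensor products via projective resolutions: functoriality of projective resolutions yields one quasi-isomorphism, and the termwise strong flatness hypothesis yields the other. By the construction of the derived completed tensor product recalled in \cite{BM21} \S 1.2, $M_0 \wh{\otimes}_A^{\bL} M_1$ is represented by $\Tot(\cP_{\cC}^{A}(M_0) \wh{\otimes}_A \cP_{\cC}^{A}(M_1))$, so it suffices to verify that both legs of the given span are quasi-isomorphisms in $\Chba_{\cC}(A)$; inverting either leg and composing with the other then produces the claimed isomorphism in $\Derba_{\cC}(A)$ together with its inverse.

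For the downward leg $\Tot(\cP_{\cC}^{A}(p_0) \wh{\otimes}_A \cP_{\cC}^{A}(p_1))$, I would factor it as
$$\Tot(\cP_{\cC}^{A}(p_0) \wh{\otimes}_A \id) \circ \Tot(\id \wh{\otimes}_A \cP_{\cC}^{A}(p_1)).$$
Each of $\cP_{\cC}^{A}(P_i)$ and $\cP_{\cC}^{A}(M_i)$ is termwise projective, hence termwise strongly flat, and each $\cP_{\cC}^{A}(p_i)$ is a quasi-isomorphism by functoriality of the projective resolution. The claim then reduces, twice, to the auxiliary fact that tensoring a quasi-isomorphism between termwise strongly flat complexes with a termwise strongly flat complex produces a quasi-isomorphism after totalisation. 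This is the standard row-and-column filtration argument on the bicomplex, transposed to the quasi-abelian setting.

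For the rightward leg $\Tot(\cP_{\cC}^{A}(P_0) \wh{\otimes}_A \cP_{\cC}^{A}(P_1)) \to \Tot(P_0 \wh{\otimes}_A P_1)$, I would use the augmentation quasi-isomorphisms $\epsilon_i \colon \cP_{\cC}^{A}(P_i) \to P_i$. Assuming without loss of generality that $P_0$ is termwise strongly flat, factor the morphism as
$$\Tot(\cP_{\cC}^{A}(P_0) \wh{\otimes}_A \cP_{\cC}^{A}(P_1)) \to \Tot(P_0 \wh{\otimes}_A \cP_{\cC}^{A}(P_1)) \to \Tot(P_0 \wh{\otimes}_A P_1).$$
The first arrow is obtained by tensoring the quasi-isomorphism $\epsilon_0$ between termwise strongly flat complexes with the termwise strongly flat complex $\cP_{\cC}^{A}(P_1)$; the second by tensoring the quasi-isomorphism $\epsilon_1$ with the termwise strongly flat complex $P_0$. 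Both invocations are instances of the same auxiliary fact used above, so the composition is a quasi-isomorphism.

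The principal obstacle is the quasi-abelian subtlety: the relevant filtration spectral sequence must converge to the \emph{strict} homology of the total complex, and the morphisms induced on the $E_1$-page have to be strict isomorphisms rather than merely algebraic ones. This is precisely where strong flatness enters: by \cite{BK20} Definition 3.2, strongly flat objects preserve strict kernels, hence the strict homology used to detect quasi-isomorphisms in $\Chba_{\cC}(A)$. Once this auxiliary lemma on tensoring quasi-isomorphisms with termwise strongly flat complexes is established, the proof of the stated lemma is a formal two-step comparison identical in shape to its algebraic analogue.
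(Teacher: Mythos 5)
Your proposal is correct in outline and follows essentially the same route as the paper's proof: both reduce the claim to showing that the two legs of the span are (or represent) isomorphisms in $\Derba_{\cC}(A)$, factor the two-variable morphism $\Tot(\cP_{\cC}^A(p_0) \wh{\otimes}_A \cP_{\cC}^A(p_1))$ into two one-variable steps, and rest everything on the single auxiliary fact that totalising the tensor of a quasi-isomorphism with a termwise strongly flat complex yields a quasi-isomorphism. The paper does not reprove this by the filtration argument you sketch; it simply cites it as \cite{BM21} Proposition 1.6 (iv), together with the fact that projective objects are strongly flat (\cite{BK20} Proposition 3.11). Your treatment of the rightward leg (factoring through $\Tot(P_0 \wh{\otimes}_A \cP_{\cC}^A(P_1))$) just re-derives what the paper cites as Proposition 1.6 (iii); that is fine.

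The one step you pass over too quickly is the assertion that each $\cP_{\cC}^A(p_i)$ is a quasi-isomorphism ``by functoriality of the projective resolution.'' Functoriality only produces the chain morphism; in the quasi-abelian setting the commutative square with the augmentations a priori shows only that $\cP_{\cC}^A(p_i)$ \emph{represents an isomorphism} in $\Derba_{\cC}(A)$, and upgrading this to ``is a quasi-isomorphism'' requires an extra input, namely \cite{BM21} Corollary 1.5 (i). The paper spends the second half of its proof on precisely this point. It also partly sidesteps the issue for the factor $\Tot(\cP_{\cC}^A(P_0) \wh{\otimes}_A \cP_{\cC}^A(p_1))$ by placing that arrow in a commutative square whose other three sides ($\Tot(p_0 \wh{\otimes}_A P_1)$ and the two augmentation comparisons) are quasi-isomorphisms by Proposition 1.6 (iii)--(iv), and concluding by two-out-of-three in the derived category that it represents an isomorphism, which is all the statement needs. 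Once you insert the Corollary 1.5 (i) step, your more direct version goes through; the remaining differences from the paper are organisational rather than substantive.
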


\begin{proof}
By \cite{BM21} Proposition 1.6 (i), we may assume that $P_1$ is termwise strongly flat. It suffices to show that every arrow in the commutative diagram
\begin{eqnarray*}
\xymatrix{
\Tot(\cP_{\cC}^A(P_0) \wh{\otimes}_A \cP_{\cC}^A(P_1)) \ar[r] \ar[d]^-{\Tot(\cP_{\cC}^A(P_0) \wh{\otimes}_A \cP_{\cC}^A(p_1))} & \Tot(P_0 \wh{\otimes}_A P_1) \ar[d]^-{\Tot(p_0 \wh{\otimes}_A P_1)} \\
\Tot(\cP_{\cC}^A(M_0) \wh{\otimes}_A \cP_{\cC}^A(P_1)) \ar[r] \ar[d]^-{\Tot(\cP_{\cC}^A(p_0) \wh{\otimes}_A \cP_{\cC}^A(M_1))} & \Tot(M_0 \wh{\otimes}_A P_1) \\
\Tot(\cP_{\cC}^A(M_0) \wh{\otimes}_A \cP_{\cC}^A(M_1)) &
}
\end{eqnarray*}
of $\Chba_{\cC}(A)$ represents an isomorphism in $\Derba_{\cC}(A)$. Since $P_1$ is termwise strongly flat, the horizontal arrows are quasi-isomorphisms by \cite{BM21} Proposition 1.6 (iii). Since $P_1$ is termwise strongly flat and $p_0$ is a quasi-isomorphism, the right vertical arrow is a quasi-isomorphism by \cite{BM21} Proposition 1.6 (iv). By the commutativity of the top square, the top left vertical arrow represents an isomorphism in $\Derba_{\cC}(A)$. In order to show that the bottom left vertical arrow $\Tot(\cP_{\cC}^A(p_0) \wh{\otimes}_A \cP_{\cC}^A(M_1))$ represents an isomorphism in $\Derba_{\cC}(A)$, we consider the commutative diagram
\begin{eqnarray*}
\xymatrix{
\cP_{\cC}^A(P_1) \ar[r] \ar[d]^-{\cP_{\cC}^A(p_1)} & P_1 \ar[d]^-{p_1}\\
\cP_{\cC}^A(M_1) \ar[r] & M_1
}
\end{eqnarray*}
of $\Chba_{\cC}(A)$. By the definition of the projective resolution functor $\cP_{\cC}^A$, the two horizontal arrows are quasi-isomorphisms. Since $p_1$ is a quasi-isomorphism, $\cP_{\cC}^A(p_1)$ represents an isomorphism in $\Derba_{\cC}(A)$ by the commutativity of the diagram. This implies that $\cP_{\cC}^A(p_1)$ is a quasi-isomorphism by \cite{BM21} Corollary 1.5 (i). Since $\cP_{\cC}^A(M_0)$ is termwise projective and $\cP_{\cC}^A(p_0)$ is a quasi-isomorphism, the bottom left vertical arrow $\Tot(P_{\cC}^A(p_0) \wh{\otimes}_A P_{\cC}^A(M_1))$ is a quasi-isomorphism by \cite{BK20} Proposition 3.11 and \cite{BM21} Proposition 1.6 (iv).
\end{proof}

\begin{lmm}
\label{stably exactness}
Let $M \in \Mod_{\cC}(\rCfin(X,R))$. If $R$ is isolated at $0$ and $j$ is a $\zeta$-embedding, then the chain complex
\begin{eqnarray*}
0 \to I \wh{\otimes}_{\rCfin(X,R)} M \to M \to \rCfin(K,R) \wh{\otimes}_{\rCfin(X,R)} M \to 0
\end{eqnarray*}
of $\Mod_{\cC}(\rCfin(X,R))$ is strictly exact.
\end{lmm}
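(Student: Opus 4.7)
The plan is to deduce this lemma formally from the already established Lemma \ref{Tietze} (ii) and Lemma \ref{I is stably strict monomorphism}, together with the right exactness of the completed tensor product. The whole argument is essentially categorical; all the analytic content sits in those two earlier lemmata.

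First, I will use Lemma \ref{Tietze} (ii) to recall that $\Gamma_{\cD_2}(j)\colon \rCfin(X,R) \to \rCfin(K,R)$ is a strict epimorphism in $\Mod_{\cC}(\rCfin(X,R))$, whose kernel is by definition $I$. Hence the short sequence
$$0 \to I \stackrel{i}{\to} \rCfin(X,R) \to \rCfin(K,R) \to 0$$
is strictly exact in $\Mod_{\cC}(\rCfin(X,R))$. I will then apply the functor $(\cdot) \wh{\otimes}_{\rCfin(X,R)} M$ to this sequence. Since this functor is left adjoint to $\uHom_{\Mod_{\cC}(\rCfin(X,R))}(M,\cdot)$, it preserves cokernels, and therefore produces a strictly right exact sequence
$$I \wh{\otimes}_{\rCfin(X,R)} M \to \rCfin(X,R) \wh{\otimes}_{\rCfin(X,R)} M \to \rCfin(K,R) \wh{\otimes}_{\rCfin(X,R)} M \to 0.$$
The unitor $\rCfin(X,R) \wh{\otimes}_{\rCfin(X,R)} M \to M$ is an isomorphism in $\Mod_{\cC}(\rCfin(X,R))$ (though, as emphasised in \S \ref{Preliminaries}, not necessarily an isometry), and under this identification the leftmost arrow is exactly $i_M$, while the rightmost becomes the map $M \to \rCfin(K,R) \wh{\otimes}_{\rCfin(X,R)} M$ appearing in the lemma.

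Finally, Lemma \ref{I is stably strict monomorphism} tells us that $i_M$ is a strict monomorphism, and in any quasi-abelian category a strict monomorphism coincides with the kernel of its cokernel. Combining this with the strict right exactness obtained in the previous step gives strict exactness at every term of the four-term sequence, which is the claim. The main subtle point to keep track of is the non-isometric nature of the unitor, which forces one to distinguish between categorical isomorphisms and isometries; since strict exactness is a purely categorical property, however, this does not obstruct the argument. Note that we do not need to invoke Lemma \ref{flatness of I} here: the only genuinely non-formal input is the stability of the strict monomorphism property of $i_M$ under arbitrary $M$, which was handled separately using the idempotent approximation of Lemma \ref{approximation by idempotent}.
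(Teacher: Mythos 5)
Your proposal is correct and follows essentially the same route as the paper: the paper's own proof simply cites the strong right exactness of $(\cdot) \wh{\otimes}_{\rCfin(X,R)} M$ together with Lemma \ref{I is stably strict monomorphism}, which is exactly the argument you spell out (with the use of Lemma \ref{Tietze} (ii) to get strict coexactness of $0 \to I \to \rCfin(X,R) \to \rCfin(K,R) \to 0$ made explicit rather than left implicit). Your added care about the non-isometric unitor and the identification of the cokernel is a correct and welcome elaboration, not a deviation.
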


\begin{proof}
The assertion immediately follows from the strong right exactness of $(\cdot) \wh{\otimes}_{\rCfin(X,R)} M$ and Lemma \ref{I is stably strict monomorphism}.
\end{proof}

\begin{lmm}
\label{derived tensor for embedding is tensor}
Let $M \in \Mod_{\cC}(\rCfin(X,R))$. If $R$ is isolated at $0$ and $j$ is a $\zeta$-embedding, then the natural morphism $\rCfin(K,R) \wh{\otimes}_{\rCfin(X,R)}^{\bL} M \to \rCfin(K,R) \wh{\otimes}_{\rCfin(X,R)} M$ in $\Der_{\cC}(\rCfin(X,R))$ is an isomorphism.
\end{lmm}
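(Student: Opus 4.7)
The plan is to exhibit a short strongly flat resolution of $\rCfin(K,R)$ as an $\rCfin(X,R)$-module and then compute the derived tensor product along it using Lemma \ref{computation of derived tensor}, comparing the result with the underived tensor via Lemma \ref{stably exactness}.

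More concretely, I would proceed as follows. First, by Lemma \ref{Tietze}(ii) the morphism $\Gamma_{\cD_2}(j) \colon \rCfin(X,R) \to \rCfin(K,R)$ is a strict epimorphism in $\Mod_{\cC}(\rCfin(X,R))$ with kernel $I$, so the complex
\begin{eqnarray*}
P^\bullet \colon \ \cdots \to 0 \to I \stackrel{i}{\to} \rCfin(X,R) \to 0
\end{eqnarray*}
(placed in degrees $-1$ and $0$) admits a natural quasi-isomorphism $P^\bullet \to \rCfin(K,R)$, where $\rCfin(K,R)$ sits in degree $0$. By Lemma \ref{flatness of I}, both terms of $P^\bullet$ are strongly flat, so $P^\bullet$ is termwise strongly flat.

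Next, I would apply Lemma \ref{computation of derived tensor} with $A = \rCfin(X,R)$, $P_0 = P^\bullet$, $M_0 = \rCfin(K,R)$, $p_0$ the quasi-isomorphism above, and $P_1 = M_1 = M$ with $p_1 = \id_M$. Since $P^\bullet$ is termwise strongly flat and both $p_0$ and $p_1$ are quasi-isomorphisms, Lemma \ref{computation of derived tensor} yields a canonical isomorphism
\begin{eqnarray*}
\rCfin(K,R) \wh{\otimes}_{\rCfin(X,R)}^{\bL} M \stackrel{\cong}{\to} \Tot\!\left( P^\bullet \wh{\otimes}_{\rCfin(X,R)} M \right)
\end{eqnarray*}
in $\Derba_{\cC}(\rCfin(X,R))$. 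The right-hand side is simply the two-term complex $I \wh{\otimes}_{\rCfin(X,R)} M \to M$ placed in degrees $-1$ and $0$.

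Finally, by Lemma \ref{stably exactness}, the sequence
\begin{eqnarray*}
0 \to I \wh{\otimes}_{\rCfin(X,R)} M \to M \to \rCfin(K,R) \wh{\otimes}_{\rCfin(X,R)} M \to 0
\end{eqnarray*}
is strictly exact, so the canonical chain morphism from $\Tot(P^\bullet \wh{\otimes}_{\rCfin(X,R)} M)$ to $\rCfin(K,R) \wh{\otimes}_{\rCfin(X,R)} M$ (viewed as a complex concentrated in degree $0$) is a quasi-isomorphism. Composing with the isomorphism from Lemma \ref{computation of derived tensor} identifies the natural map $\rCfin(K,R) \wh{\otimes}_{\rCfin(X,R)}^{\bL} M \to \rCfin(K,R) \wh{\otimes}_{\rCfin(X,R)} M$ with an isomorphism in $\Derba_{\cC}(\rCfin(X,R))$. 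The only delicate point, which is already handled upstream, is the strong flatness of $I$ in Lemma \ref{flatness of I}; once that input is available, the derived-to-underived comparison here is a formal consequence of the length-two flat resolution.
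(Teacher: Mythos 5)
Your argument is correct and is essentially the paper's own proof: both resolve $\rCfin(K,R)$ by the two-term termwise strongly flat complex $I \to \rCfin(X,R)$ (using Lemma \ref{Tietze} (ii) and Lemma \ref{flatness of I}), invoke Lemma \ref{computation of derived tensor} for the pair $(p,\id_M)$, and conclude with Lemma \ref{stably exactness} to identify $\Tot(P^\bullet \wh{\otimes}_{\rCfin(X,R)} M)$ with the underived tensor product. No gaps.
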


\begin{proof}
We denote by $E \in \Chba_{\cC}(\rCfin(X,R))$ the chain complex
\begin{eqnarray*}
\cdots \to 0 \to I \to \rCfin(X,R) \to 0 \to \cdots
\end{eqnarray*}
of $\Mod_{\cC}(\rCfin(X,R))$, and by $p$ the morphism $E \to \rCfin(K,R)$ in $\Chba_{\cC}(\rCfin(X,R))$ associated to $\Gamma_{\cD_2}(j)$. By Lemma \ref{stably exactness}, $p \wh{\otimes}_{\rCfin(X,R)} M$ is a quasi-isomorphism. Moreover, $E$ is termwise strongly flat by Lemma \ref{flatness of I}. Therefore, the assertion follows from Lemma \ref{computation of derived tensor} applied to $(p,\id_M)$.
\end{proof}

We record the following consequence of Lemma \ref{derived tensor for embedding is tensor} for later use.

\begin{crl}
\label{acyclic implies flat}
The object $\rCfin(K,R)$ of $\Mod_{\cC}(\rCfin(X,R))$ is flat, in the sense that the functor $(\cdot) \wh{\otimes}_{\rCfin(X,R)} \rCfin(K,R) \colon \Mod_{\cC}(\rCfin(X,R)) \to \Mod_{\cC}(\rCfin(K,R))$ is exact.
\end{crl}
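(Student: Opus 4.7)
The aim is to upgrade Lemma \ref{derived tensor for embedding is tensor}, which is a statement about the derived tensor product, into the plain exactness of the functor $(-) \wh{\otimes}_{\rCfin(X,R)} \rCfin(K,R)$. The cleanest route is through the triangulated structure of $\Derba_{\cC}$. Given any strict short exact sequence
$$0 \to M_1 \to M_2 \to M_3 \to 0$$
in $\Mod_{\cC}(\rCfin(X,R))$, I would first view it as a distinguished triangle in $\Derba_{\cC}(\rCfin(X,R))$ and apply the triangulated functor $\rCfin(K,R) \wh{\otimes}_{\rCfin(X,R)}^{\bL} (-)$ to obtain a new distinguished triangle in $\Derba_{\cC}(\rCfin(K,R))$. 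By Lemma \ref{derived tensor for embedding is tensor}, each vertex of this new triangle is canonically and naturally isomorphic to the underived tensor product $\rCfin(K,R) \wh{\otimes}_{\rCfin(X,R)} M_i$ concentrated in degree $0$, so the whole triangle is concentrated in degree $0$. By the standard dictionary between distinguished triangles in $\Derba_{\cC}(\rCfin(K,R))$ all of whose vertices lie in degree $0$ and strict short exact sequences in $\Mod_{\cC}(\rCfin(K,R))$, the sequence obtained by tensoring the original one with $\rCfin(K,R)$ is strictly exact, which is precisely the required exactness.

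A more hands-on alternative would be to tensor the strict exact sequence
$$0 \to I \to \rCfin(X,R) \to \rCfin(K,R) \to 0$$
(which is Lemma \ref{stably exactness} applied to $M = \rCfin(X,R)$) with each $M_i$. Because both $I$ and $\rCfin(X,R)$ are strongly flat by Lemma \ref{flatness of I}, the first two columns of the resulting $3 \times 3$ commutative diagram of tensor products are strictly exact, and a quasi-abelian snake-lemma style chase yields the strict exactness of the third column. This route has the advantage of not invoking the derived machinery, but it trades that for bookkeeping of norms in the quasi-abelian diagram chase.

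The only delicate point in the derived approach is ensuring that the identification of Lemma \ref{derived tensor for embedding is tensor} is compatible with the morphisms in the triangle, so that it upgrades to an isomorphism of distinguished triangles rather than a pointwise isomorphism only. This compatibility is automatic from the construction of the derived tensor via the termwise strongly flat resolution $E = [I \to \rCfin(X,R)]$ together with the functoriality of $\cP_{\cC}^{\rCfin(X,R)}$ exploited in Lemma \ref{computation of derived tensor}; no genuine obstacle remains beyond recording this naturality, and the corollary then follows immediately.
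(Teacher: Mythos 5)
Your first argument is essentially the paper's proof: the paper regards the strict short exact sequence as a strictly exact complex, i.e.\ a zero object of $\Derba_{\cC}(\rCfin(X,R))$, applies the derived tensor functor $(\cdot) \wh{\otimes}_{\rCfin(X,R)}^{\bL} \rCfin(K,R)$, and then invokes Lemma \ref{derived tensor for embedding is tensor} to identify the result with the underived tensor and \cite{BM21} Corollary 1.5 (iii) to conclude strict exactness --- which is exactly the content of your distinguished-triangle formulation. Your second, more hands-on route via the $3 \times 3$ diagram and Lemmas \ref{stably exactness} and \ref{flatness of I} is a viable alternative, but it is not the one taken in the paper.
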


\begin{proof}
Let $E$ be a strictly exact sequence $0 \to M \to N \to L \to 0$ of $\Mod_{\cC}(\rCfin(X,R))$ regarded as an object of $\Derba_{\cC}(\rCfin(X,R))$. Since $E$ is a zero object of $\Derba_{\cC}(\rCfin(X,R))$ and $(\cdot) \wh{\otimes}_{\rCfin(X,R)}^{\bL} \rCfin(K,R)$ is a functor, $E \wh{\otimes}_{\rCfin(X,R)}^{\bL} \rCfin(K,R)$ is a zero object of $\Derba_{\cC}(\rCfin(K,R))$. This implies that $E \wh{\otimes}_{\rCfin(X,R)} \rCfin(K,R)$ is a zero object of $\Derba_{\cC}(\rCfin(K,R))$ by Lemma \ref{derived tensor for embedding is tensor}, and hence is strictly exact by \cite{BM21} Corollary 1.5 (iii).
\end{proof}

Finally, we prove Theorem \ref{embedding corresponds to homotopy epimorphism} in a way completely parallel to the proof of \cite{BM21} Lemma 3.9.

\begin{proof}[Proof of Theorem \ref{embedding corresponds to homotopy epimorphism}]
The condition (iii) implies the condition (ii) by Lemma \ref{Tietze} (ii). The condition (ii) implies the condition (i) by Lemma \ref{derived tensor for embedding is tensor} applied to $M = \rCfin(K,R)$ and \cite{BM21} Proposition 1.8. Suppose that the condition (i) holds and $R \neq \ens{0}$. Then, $\Gamma_{\cD_2}(j)$ is an epimorphism in $\Alg_{\cC}(R)$ by \cite{BM21} Proposition 1.7, and hence the map
\begin{eqnarray*}
\Hom_{\Alg_{\cC}(R)}(\rCfin(K,R),R) & \to & \Hom_{\Alg_{\cC}(R)}(\rCfin(X,R),R) \\
\phi & \mapsto & \phi \circ \Gamma_{\cD_2}(j)
\end{eqnarray*}
is injective. This implies the injectivity of $\zeta(j)$ by Proposition \ref{Gelfand transform}. Since $\zeta(K)$ and $\zeta(X)$ are compact Hausdorff topological spaces, $\zeta(j)$ is a homeomorphism onto the image.
\end{proof}

\section{Tate Acyclicity for functions valued in discrete Banach rings}
\label{Tate Acyclicity for functions valued in discrete Banach rings}

The next step after the algebraic characterisation of closed topological embeddings is the characterisation of topological covers. We give such a characterisation in this section by showing that a finite set of closed topological embeddings between totally disconnected compact Hausdorff topological spaces is a (set-theoretic) cover if and only if the associated derived (or equivalently, non-derived) Tate--\Cech complex is strictly exact. This result, combined with the characterisation of closed topological embeddings as homotopy epimorphisms given in \S \ref{Homotopy epimorphisms for functions valued in discrete Banach rings}, shows that the topology of totally disconnected compact Hausdorff topological spaces agree with the restriction of the formal homotopy Zariski topology on the dual category of $\Alg_\cC(R)$ when totally disconnected compact Hausdorff topological spaces are considered as objects of the dual category of algebras of continuous functions to $R$ through \Gelfand's duality (Theorem \ref{Gelfand duality}). We refer the reader to \cite{BM21} \S 1.3 for more details on this.

\vspace{0.1in}
For an $A \in \Alg_{\cC}(R)$, a {\it non-derived} (resp.\ {\it derived}) {\it cover} of $A$ in $\cC$ is an $S \subset \Alg_{\cC}(A)$ satisfying the following:
\begin{itemize}
\item[(i)] For any $B \in S$, the structure morphism $A \to B$ is an epimorphism (resp.\ a homotopy epimorphism) in $\Alg_{\cC}(R)$.
\item[(ii)] There exists a finite subset $S_0 \subset S$ satisfying Tate's acyclicity (resp.\ derived Tate's acyclicity) (cf.\ \cite{BK20} Theorem 2.15 and \cite{BM21} pp.\ 15--16).
\end{itemize}
Suppose that $R$ is isolated at $0$. We keep the abbreviations $(\cC,R) \coloneqq \cD_2$ and $(\cC,X,R) \coloneqq \cD_3$ introduced before. We denote by $\Gamma_{\cD_3}$ the functor $(\Tpl/X)^{\op} \to \Alg_{\cC}(\rCfin(X,R))$ induced by $\Gamma_{\cD_2}$. For an $S \subset \Tpl/X$, we put $\Gamma^{\cD_3}_*(S) \coloneqq \set{\Gamma_{\cD_3}(K,j)}{(K,j) \in S}$. A {\it $\zeta$-cover} of $X$ is an $S \subset \Tpl/X$ satisfying the following:
\begin{itemize}
\item[(i)] For any $(K,j) \in S$, $j$ is a $\zeta$-embedding.
\item[(ii)] There exists a finite subset $S_0 \subset S$ such that $\bigcup_{(K,j) \in S_0} \zeta(j)(\zeta(K)) = \zeta(X)$.
\end{itemize}
We give an isolated analogue of \cite{BM21} Theorem 3.1, which is itself an analogue of Tate's acyclicity in rigid geometry.

\begin{thm}
\label{Tate's acyclicity for closed immersion}
Let $S \subset \Tpl/X$. If $R$ is isolated at $0$, $X \in \TDCH$, and $S$ consists of closed immersions, then the following are equivalent:
\begin{itemize}
\item[(i)] The set $S$ is a $\zeta$-cover of $X$ or $R = \ens{0}$.
\item[(ii)] The set $S$ admits a finite subset $S_0$ with $\bigcup_{(K,j) \in S} j(K) = X$ or $R = \ens{0}$.
\item[(iii)] The set $\Gamma^{\cD_3}_*(S)$ is a derived cover of $\rC(X,R)$ in $\cC$.
\item[(iv)] The set $\Gamma^{\cD_3}_*(S)$ is a non-derived cover of $\rC(X,R)$ in $\cC$.
\end{itemize}
\end{thm}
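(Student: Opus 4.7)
The case $R = \ens{0}$ makes every condition hold trivially, so I assume $R \neq \ens{0}$ throughout. The equivalence (i) $\Leftrightarrow$ (ii) is automatic from the hypotheses: since $X \in \TDCH$ and each $j$ is a closed immersion, $j(K) \subset X$ is itself in $\TDCH$, so both $\iota_{\zeta, X}$ and $\iota_{\zeta, K}$ are homeomorphisms by the universal property of the Banaschewski compactification; naturality identifies $\zeta(j)$ with $j$, making each $j$ automatically a $\zeta$-embedding and reducing the two covering conditions to one.

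For (iv) $\Rightarrow$ (ii), let $S_0 \subset S$ be any finite subset whose image under $\Gamma^{\cD_3}_*$ witnesses Tate's acyclicity. If $\bigcup_{(K, j) \in S_0} j(K) \neq X$, pick $x$ in the complement; since this finite union is closed and $X \in \TDCH$, there is a clopen $U \ni x$ disjoint from it, and $1_{X, R, U}$ is then a nonzero element of $\rC(X, R)$ mapping to zero in every $\rC(K, R)$, contradicting strict exactness at the zeroth position of the Tate--\Cech complex. The implication (iii) $\Rightarrow$ (iv) is formal modulo the coincidence of derived and non-derived Tate--\Cech complexes established below: a homotopy epimorphism is an epimorphism in $\Alg_\cC(R)$, and a zero object of $\Derba_\cC(\rC(X, R))$ is represented by a strictly exact complex by \cite{BM21} Corollary 1.5.

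The main step is (ii) $\Rightarrow$ (iii). Each closed immersion between TDCH spaces is a $\zeta$-embedding, so Theorem \ref{embedding corresponds to homotopy epimorphism} gives that every $\Gamma_{\cD_2}(j)$ is a homotopy epimorphism. For derived Tate's acyclicity on a finite subcover $\ens{(K_i, j_i)}_{i=1}^n$, my plan is to first establish the isometric isomorphism
\begin{eqnarray*}
\rC(K_{i_1}, R) \wh{\otimes}_{\rC(X, R)} \cdots \wh{\otimes}_{\rC(X, R)} \rC(K_{i_k}, R) \cong \rC(K_{i_1} \cap \cdots \cap K_{i_k}, R)
\end{eqnarray*}
in $\Alg_\cC(R)$. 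Since each $\rC(K_i, R)$ is the strict quotient $\rC(X, R)/I_{X, R, K_i}$ by Lemma \ref{Tietze}(ii), this reduces to the algebraic identity $\sum_{j} I_{X, R, K_{i_j}} = I_{X, R, \bigcap_{j} K_{i_j}}$. The nontrivial inclusion is proved by expanding a function vanishing on the intersection as a finite $R$-linear combination of characteristic functions on a clopen partition via Proposition \ref{idempotent generates C(X,R)}, and then separating the pairwise disjoint closed portions on each piece by nested clopens using the separation property of TDCH spaces. By Lemma \ref{derived tensor for embedding is tensor} applied inductively, the derived iterated tensors coincide with the non-derived ones, so the derived Tate--\Cech complex coincides with the geometric \Cech complex of the closed cover $\ens{K_i}_{i=1}^n$; its strict exactness then follows from a Mayer--Vietoris-style verification, using Lemma \ref{Tietze}(i) to lift cocycles and Proposition \ref{Cfin is closed}(i) to ensure quotient and tensor norms agree.

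The main technical obstacle is precisely the isometric identification of iterated tensor products with function algebras on intersections, together with upgrading algebraic to strict exactness of the resulting \Cech complex. Both steps rely essentially on Proposition \ref{Cfin is closed}(i), which compensates for the failure of Banach's Open Mapping Theorem in $\Mod_\cC(R)$ when $R$ is isolated at $0$.
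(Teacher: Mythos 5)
Your overall architecture matches the paper's: reduce everything to the statement that the (non-derived) Tate--\Cech complex of a finite family of closed immersions is strictly exact if and only if the images cover $X$ (or $R = \ens{0}$), using Lemma \ref{Tietze}, Lemma \ref{subset operation and aritmetic operation}, Lemma \ref{derived tensor for embedding is tensor} and Lemma \ref{intersection} to identify iterated derived and non-derived tensor products with $\rC$ of intersections. Your treatment of (i) $\Leftrightarrow$ (ii) and of the non-covering direction is correct; in fact your use of a clopen neighbourhood of a point outside the (closed, finite) union is a slight simplification of the paper's argument, which instead Tietze-extends the characteristic function of a single point.

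The genuine gap is in the covering direction: you assert that strict exactness of the resulting \Cech complex ``follows from a Mayer--Vietoris-style verification, using Lemma \ref{Tietze}(i) to lift cocycles,'' but this is precisely the technical heart of the theorem and you do not carry it out. Lifting cocycles degree by degree with norm control for a cover of cardinality $\geq 3$ is not a routine consequence of Tietze extension; since Banach's Open Mapping Theorem fails here, algebraic exactness does not upgrade to strict exactness for free, and one must either exhibit an explicit bounded contracting homotopy (e.g.\ built from a disjoint clopen partition subordinate to the cover) or argue structurally. The paper does the latter by induction on the cardinality $c$ of the finite subcover: the case $c \leq 2$ is the gluing lemma, and for $c > 2$ one splits off a single member $(K,j)$, sets $\check{K}$ to be the union of the images of the remaining members, and identifies $T_{S_0}$ with the total complex of $T_{S_0 \setminus \ens{(K,j)}} \wh{\otimes}_{\rC(X,R)} T_{\ens{(K,j)}}$; the induction hypothesis makes the comparison map from $T_{\ens{(\check{K},\check{j})}} \wh{\otimes}_{\rC(X,R)} T_{\ens{(K,j)}}$ a termwise quasi-isomorphism, so \cite{BM21} Proposition 1.6 (iv) reduces strict exactness of $T_{S_0}$ to the two-element case. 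Some device of this kind is required to make your sketch into a proof; as written, the higher-degree exactness is asserted rather than proved. Your reduction of the $k$-fold tensor identification to the identity $\sum_j I_{X,R,K_{i_j}} = I_{X,R,\bigcap_j K_{i_j}}$ is fine (it follows by induction from Lemma \ref{subset operation and aritmetic operation} (ii), including the strictness statement), but it only handles the terms of the complex, not the exactness of the differentials.
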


In order to show Theorem \ref{Tate's acyclicity for closed immersion}, we prepare a couple of lemmata. First, we show an isolated analogue of \cite{BM21} Lemma 3.2. Recall that for any $X_0 \subset X$ we put $I_{X,R,X_0} \coloneqq \set{f \in \rCfin(X,R)}{f |_{X_0} = 0}$.

\begin{lmm}
\label{subset operation and aritmetic operation}
Let $K_0$ and $K_1$ be subsets of $X$.
\begin{itemize}
\item[(i)] The equalities $I_{X,R,K_0} I_{X,R,K_1} = I_{X,R,K_0 \cup K_1} = I_{X,R,K_0} \cap I_{X,R,K_1}$ hold.
\item[(ii)] If $K_0$ and $K_1$ are closed, $X \in \TDCH$, and $R$ is isolated at $0$, then the addition $I_{X,R,K_0} \oplus I_{X,R,K_1} \to I_{X,R,K_0 \cap K_1}$ is a strict epimorphism in $\Mod_{\cC}(\rC(X,R))$.
\end{itemize}
\end{lmm}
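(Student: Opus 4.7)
For part (i), the plan is to verify the three relevant inclusions. The middle equality $I_{X,R,K_0 \cup K_1} = I_{X,R,K_0} \cap I_{X,R,K_1}$ is tautological, since $f$ vanishes on $K_0 \cup K_1$ if and only if it vanishes on each $K_i$ separately. The inclusion $I_{X,R,K_0} I_{X,R,K_1} \subseteq I_{X,R,K_0 \cup K_1}$ is immediate from the fact that a product of functions vanishes wherever either factor does. For the crucial reverse inclusion $I_{X,R,K_0 \cup K_1} \subseteq I_{X,R,K_0} I_{X,R,K_1}$, I would exploit the idempotent structure already used in Proposition \ref{I is ind-projective}: given $f \in I_{X,R,K_0 \cup K_1}$, the set $U \coloneqq f^{-1}(R \setminus \ens{0})$ is clopen by the finiteness of $f(X)$ and disjoint from $K_0 \cup K_1$, so $1_{X,R,U} \in I_{X,R,K_0}$, $f \in I_{X,R,K_1}$, and $1_{X,R,U} \cdot f = f$ exhibits $f$ as an element of $I_{X,R,K_0} I_{X,R,K_1}$.

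For the surjectivity in (ii), the idea is again to localise everything to the clopen support of the function. Given $f \in I_{X,R,K_0 \cap K_1}$, set $U \coloneqq f^{-1}(R \setminus \ens{0}) \in \CO(X)$, which is disjoint from $K_0 \cap K_1$. Then $U \cap K_0$ and $U \cap K_1$ are disjoint closed subsets of $U$, and since $U$ is clopen in $X \in \TDCH$ it is itself totally disconnected compact Hausdorff. A standard application of total disconnectedness yields a clopen partition $U = V \sqcup W$ with $U \cap K_1 \subseteq V$ and $U \cap K_0 \subseteq W$. Setting $f_0 \coloneqq 1_{X,R,W} f$ and $f_1 \coloneqq 1_{X,R,V} f$, one verifies $f_0 \in I_{X,R,K_0}$, $f_1 \in I_{X,R,K_1}$, and $f_0 + f_1 = 1_{X,R,U} f = f$ because $f$ is supported on $U$.

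For the strictness, the key observation is that the construction above furnishes an explicit (set-theoretic, though not $\rC(X,R)$-linear) section of the addition map whose operator norm is bounded: the pointwise estimate $\n{1_{X,R,V}(x) f(x)}_R \leq \n{f(x)}_R$ at each $x$ gives $\n{1_{X,R,V} f}_\infty \leq \n{f}_\infty$, so both $f_0$ and $f_1$ are bounded by $\n{f}$ in $\rC(X,R)$. This yields an upper bound on the quotient norm of $I_{X,R,K_0 \cap K_1}$ viewed as a cokernel, while the converse bound is automatic from boundedness of addition in $\cC$. The two norms are therefore equivalent, showing that the addition map realises $I_{X,R,K_0 \cap K_1}$ as the strict cokernel in $\Mod_{\cC}(\rC(X,R))$.

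The main obstacle is precisely the strictness claim in (ii): since $R$ is isolated at $0$, Banach's open mapping theorem is unavailable in $\Mod_{\cC}(\rC(X,R))$ (as emphasised in the introduction to \S \ref{Tate Acyclicity for functions valued in discrete Banach rings}), so one cannot deduce strictness from surjectivity alone. The clopen-separation argument above sidesteps this obstacle by producing the required bounded lifting by hand, using the \TDCH\ hypothesis in an essential way.
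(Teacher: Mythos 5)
Your argument is correct in substance and follows essentially the same route as the paper's: part (i) via the clopen support $U = f^{-1}(R \setminus \ens{0})$ exactly as in Proposition \ref{I is ind-projective}, and part (ii) by separating the traces of $K_0$ and $K_1$ on that support with a clopen set and splitting $f$ accordingly, with strictness obtained from the explicit norm bound $\n{f_i}_{\rC(X,R)} \leq \n{f}_{\rC(X,R)}$ on the two pieces (the paper instead separates $K_0$ from $K_1 \setminus f^{-1}(\ens{0})$ globally by \cite{BM21} Lemma 2.6, which is the same device). The one flaw is a swapped index in (ii): with $U \cap K_0 \subseteq W$ and $U \cap K_1 \subseteq V$, the function $1_{X,R,W}\, f$ equals $f(x) \neq 0$ at every point $x \in U \cap K_0$, so it does \emph{not} lie in $I_{X,R,K_0}$; you must take $f_0 \coloneqq 1_{X,R,V}\, f$ and $f_1 \coloneqq 1_{X,R,W}\, f$, after which the membership, the identity $f_0 + f_1 = 1_{X,R,U} f = f$, and the norm estimates go through verbatim.
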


\begin{proof}
The relations $I_{X,R,K_0 \cup K_1} = I_{X,R,K_0} \cap I_{X,R,K_1}$ and $I_{X,R,K_0} I_{X,R,K_1} \subset I_{X,R,K_0 \cup K_1}$ follow easily from the definitions. We show that for any $f \in I_{X,R,K_0 \cup K_1}$, there exists an $(f_0,f_1) \in I_{X,R,K_0} \times I_{X,R,K_1}$ such that $f = f_0 f_1$. By Proposition \ref{I is ind-projective} applied to $K_0$, there exists a $U \in \CO(X)$ such that $1_{X,R,U} f = f$ and $K_0 \cap U = \emptyset$. Put $f_0 \coloneqq 1_{X,R,U}$ and $f_1 \coloneqq f$. Then, $(f_0,f_1)$ satisfies the desired property. This implies $I_{X,R,K_0 \cup K_1} \subset I_{X,R,K_0} I_{X,R,K_1}$, which proves (i).

\vspace{0.1in}
In order to prove (ii), we show that for any $f \in I_{X,R,K_0 \cap K_1}$, there exists an $(f_0,f_1) \in I_{X,R,K_0} \times I_{X,R,K_1}$ such that $f = f_0 + f_1$ and $\n{f_0}_{\rC(X,R)} + \n{f_1}_{\rC(X,R)} \leq 2 \n{f}_{\rC(X,R)}$ under the assumption that $K_0$ and $K_1$ are closed, $X \in \TDCH$, and $R$ is isolated at $0$. Notice that the inclusion $I_{X,R,K_0} + I_{X,R,K_1} \subset I_{X,R,K_0 \cap K_1}$ follows immediately from the definition of the ideals. By Proposition \ref{Cfin is closed} (ii), we have $f^{-1}(\ens{0}) \in \CO(X)$. Put $K_2 \coloneqq K_1 \setminus f^{-1}(\ens{0})$. By $K_0 \cap K_1 \subset f^{-1}(\ens{0})$, we have $K_0 \cap K_2 = \emptyset$. By \cite{BM21} Lemma 2.6 applied to $K_0$ and $K_2$, there exists a $V \in \CO(X)$ such that $K_0 \subset V$ and $K_2 \subset X \setminus V$. By Proposition \ref{idempotent generates C(X,R)}, there exists a pair $(\cU,e)$ of a finite subset $\cU \subset \CO(X)$ and a map $e \colon \cU \to R$ such that $f = \sum_{U \in \cU} e(U) 1_{X,R,U}$. Put $f_0 \coloneqq \sum_{U \in \cU} e(U) 1_{X,R,U \setminus V}$ and $f_1 \coloneqq \sum_{U \in \cU} e(U) 1_{X,R,U \cap V}$. Then, $(f_0,f_1)$ satisfies the desired property because $\n{f}_{\rC(X,R)} = \max \ens{\n{f_0}_{\rC(X,R)},\n{f_1}_{\rC(X,R)}}$. This implies $I_{X,R,K_0 \cap K_1} \subset I_{X,R,K_0} + I_{X,R,K_1}$, and that the addition $I_{X,R,K_0} \oplus I_{X,R,K_1} \to I_{X,R,K_0 \cap K_1}$ is a strict epimorphism in $\Mod_{\cC}(\rC(X,R))$.
\end{proof}

We show an isolated analogue of \cite{BM21} Lemma 3.3.

\begin{lmm}
\label{intersection}
Let $K_0$ and $K_1$ be closed subsets of $X$. If $X \in \TDCH$ and $R$ is isolated at $0$, then the natural morphism $\rC(K_0,R) \wh{\otimes}_{\rC(X,R)}^{\bL} \rC(K_1,R) \to \rC(K_0,R) \wh{\otimes}_{\rC(X,R)} \rC(K_1,R)$ in $\Derba_{\cC}(\rC(X,R))$ is an isomorphism, and the natural morphism $\rC(K_0,R) \wh{\otimes}_{\rC(X,R)} \rC(K_1,R) \to \rC(K_0 \cap K_1,R)$ in $\Alg_{\cC}(\rC(X,R))$ is an isomorphism.
\end{lmm}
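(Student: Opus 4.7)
The first isomorphism follows immediately from Lemma \ref{derived tensor for embedding is tensor}. Since $X \in \TDCH$ and $R$ is isolated at $0$, Proposition \ref{Cfin is closed} (ii) identifies $\rC(X,R)$ with $\rCfin(X,R)$, and the same holds for $K_0, K_1$ and $K_0 \cap K_1$, which are closed subsets of $X$ and hence themselves lie in $\TDCH$. Any closed topological embedding between objects of $\TDCH$ is a $\zeta$-embedding, because $\iota_{\zeta,\bullet}$ restricts to a natural isomorphism on $\TDCH$. Applying Lemma \ref{derived tensor for embedding is tensor} with $j \colon K_0 \hookrightarrow X$ and $M = \rC(K_1, R)$ yields the first assertion.

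For the second isomorphism, the plan is to identify the kernel of the strict epimorphism $\rC(K_1, R) \twoheadrightarrow \rC(K_0, R) \wh{\otimes}_{\rC(X,R)} \rC(K_1, R)$ obtained from flatness. Writing $I_0 \coloneqq I_{X, R, K_0}$ and noting that $\Gamma_{\cD_2}(j)$ is a strict epimorphism by Lemma \ref{Tietze} (ii), we start from the strictly exact sequence
\begin{eqnarray*}
0 \to I_0 \to \rC(X, R) \to \rC(K_0, R) \to 0
\end{eqnarray*}
in $\Mod_{\cC}(\rC(X, R))$. By Corollary \ref{acyclic implies flat}, $\rC(K_1, R)$ is flat over $\rC(X, R)$, so applying $(\cdot) \wh{\otimes}_{\rC(X, R)} \rC(K_1, R)$ preserves strict exactness and produces
\begin{eqnarray*}
0 \to I_0 \wh{\otimes}_{\rC(X,R)} \rC(K_1, R) \to \rC(K_1, R) \to \rC(K_0, R) \wh{\otimes}_{\rC(X,R)} \rC(K_1, R) \to 0.
\end{eqnarray*}
The left-most arrow sends $f \otimes g \mapsto (f|_{K_1}) g$ and visibly factors through the closed submodule $I_{K_1, R, K_0 \cap K_1}$ (closed by Proposition \ref{Cfin is closed} (i)). Once one checks that this factored map is surjective, composing the resulting identification of the kernel with the strict epimorphism $\rC(K_1, R) \twoheadrightarrow \rC(K_0 \cap K_1, R)$ furnished by Lemma \ref{Tietze} (ii) for the $\zeta$-embedding $K_0 \cap K_1 \hookrightarrow K_1$ will give the desired isomorphism of $\rC(X,R)$-algebras.

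The main technical point, and essentially the only obstacle, is the surjectivity of $I_0 \wh{\otimes}_{\rC(X,R)} \rC(K_1, R) \to I_{K_1, R, K_0 \cap K_1}$, which I would prove by exhibiting each element as a single elementary tensor. Given $h \in I_{K_1, R, K_0 \cap K_1}$, Proposition \ref{I is ind-projective} applied inside $K_1$ (with $X_0 = K_0 \cap K_1$) yields $U \in \CO(K_1)$ with $U \cap (K_0 \cap K_1) = \emptyset$ and $1_{K_1, R, U} h = h$; explicitly one may take $U = h^{-1}(R \setminus \ens{0})$, which is clopen by Proposition \ref{Cfin is closed} (ii). As $U \subset K_1$, this gives $U \cap K_0 = \emptyset$, so $U$ and $K_0$ are disjoint closed subsets of $X \in \TDCH$, and by \cite{BM21} Lemma 2.6 there exists $W \in \CO(X)$ with $U \subset W$ and $K_0 \cap W = \emptyset$. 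Then $1_{X, R, W} \in I_0$, and the elementary tensor $1_{X, R, W} \otimes h$ maps to $1_{K_1, R, W \cap K_1} h = h$ since $U \subset W \cap K_1$ and $h = 1_{K_1, R, U} h$. This surjectivity, combined with Lemma \ref{I is stably strict monomorphism} (which makes the factored map a strict monomorphism), upgrades it to a strict isomorphism, so that the quotient $\rC(K_1, R)/I_{K_1, R, K_0 \cap K_1} \cong \rC(K_0 \cap K_1, R)$ indeed computes $\rC(K_0, R) \wh{\otimes}_{\rC(X,R)} \rC(K_1, R)$.
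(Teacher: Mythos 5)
Your proof is correct, and for the second assertion it takes a genuinely different (though closely related) route from the paper's. The paper's own proof is a one-line citation whose essential content is Lemma \ref{subset operation and aritmetic operation} (ii): writing $\rC(K_i,R) \cong \rC(X,R)/I_{X,R,K_i}$ via Lemma \ref{Tietze} (ii), it computes the tensor product symmetrically as $\rC(X,R)/(I_{X,R,K_0}+I_{X,R,K_1})$ and then uses the strict epimorphy of the addition map $I_{X,R,K_0}\oplus I_{X,R,K_1}\to I_{X,R,K_0\cap K_1}$ together with Proposition \ref{Cfin is closed} (i) to identify this quotient with $\rC(X,R)/I_{X,R,K_0\cap K_1}\cong \rC(K_0\cap K_1,R)$. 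You instead argue asymmetrically: you tensor the strict presentation of $\rC(K_0,R)$ with $\rC(K_1,R)$, invoke the flatness already established in \S\ref{Homotopy epimorphisms for functions valued in discrete Banach rings} (Corollary \ref{acyclic implies flat}, or equivalently Lemma \ref{stably exactness} with $M=\rC(K_1,R)$) to preserve strict exactness, and then identify the extended ideal $I_{X,R,K_0}\cdot\rC(K_1,R)$ with $I_{K_1,R,K_0\cap K_1}$ by a direct elementary-tensor computation. That last step, resting on \cite{BM21} Lemma 2.6, is in effect a re-proof of the one-variable instance of the paper's Lemma \ref{subset operation and aritmetic operation} (ii) that is actually needed, so the two arguments share the same combinatorial core (separating disjoint closed subsets of a space in $\TDCH$ by a clopen set); what your version buys is that it reuses the flatness machinery of \S\ref{Homotopy epimorphisms for functions valued in discrete Banach rings} and makes the identification of the kernel completely explicit, at the cost of being longer than the paper's appeal to the already-proved ideal arithmetic. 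All the individual steps check out: the image of $I_{X,R,K_0}\wh{\otimes}_{\rC(X,R)}\rC(K_1,R)$ in $\rC(K_1,R)$ is closed by Proposition \ref{Cfin is closed} (i), your clopen $W$ exists because $U$ is closed in $X$ (being clopen in the closed subset $K_1$), and Lemma \ref{I is stably strict monomorphism} upgrades the bijection onto $I_{K_1,R,K_0\cap K_1}$ to an isomorphism, so the cokernel comparison with the Tietze quotient $\rC(K_1,R)\twoheadrightarrow\rC(K_0\cap K_1,R)$ goes through.
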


\begin{proof}
The first assertion follows from Lemma \ref{derived tensor for embedding is tensor}, and the second assertion follows from Proposition \ref{isolated at 0 implies discrete}, Proposition \ref{Cfin is closed} (i), Lemma \ref{Tietze} (ii), and Lemma \ref{subset operation and aritmetic operation} (ii).
\end{proof}

Finally, we show Theorem \ref{Tate's acyclicity for closed immersion} in a way completely parallel to the proof of \cite{BM21} Theorem 3.1.

\begin{proof}[Proof of Theorem \ref{Tate's acyclicity for closed immersion}]
The equivalence between the conditions (i) and (ii) immediately follows from the definition of a $\zeta$-cover. The equivalence between the conditions (iii) and (iv) follows from Lemma \ref{intersection} and \cite{BM21} Proposition 1.10. It remains to show that (ii) is equivalent to (iii). Let $S_0$ be a finite subset of $S$. We denote by $T_{S_0} = (T_{S_0,n},d_n)_{n \in \Z}$ the (non-derived) Tate--\Cech complex
\begin{eqnarray*}
0 \to \rC(X,R) \to \prod_{(K,j) \in S_0} \rC(K,R) \to \prod_{(K_i,j_i)_{i=0}^{1} \in [S_0]^2} \rC(K_0,R) \wh{\otimes}_{\rC(X,R)} \rC(K_1,R) \to \cdots
\end{eqnarray*}
associated to $\Gamma^{\cD_3}_*(S_0)$. Put $X_0 \coloneqq \bigcup_{(K,j) \in S_0} j(K)$. By Theorem \ref{embedding corresponds to homotopy epimorphism}, it suffices to show that $T_{S_0}$ is strictly exact in $\Mod_{\cC}(R)$ if and only if $X_0 = X$ or $R = \ens{0}$.

\vspace{0.1in}
First, suppose $X_0 = X$ or $R = \ens{0}$. If $R = \ens{0}$, then $T_{S_0}$ is a zero object, and hence is strictly exact. Therefore, we may assume $X_0 = X$ and $R \ne \{ 0 \}$. We show the strict exactness by induction on the cardinality $c$ of $S_0$. If $c \leq 2$, then the strict exactness immediately follows from the gluing lemma asserting that the openness of a subset of a topological space can be tested by the pullback by a finite closed cover. Suppose $c > 2$. Fix a $(K,j) \in S_0$. Put $S_1 \coloneqq S_0 \setminus \ens{(K,j)} \subset \Tpl/X$, and $\check{K} \coloneqq \bigcup_{(K',j') \in S_1} j'(K') \subset X$. We denote by $\check{j}$ the inclusion $\check{K} \hookrightarrow X$. Replacing $(K,j)$ by a closed immersion with the same image, we may assume $(\check{K},\check{j}) \neq (K,j)$. Put $S_2 \coloneqq \ens{(K,j)} \subset \Tpl/X$, $S_3 \coloneqq \ens{(\check{K},\check{j})} \subset \Tpl/X$, $S_4 \coloneqq \ens{(K,j),(\check{K},\check{j})} \subset \Tpl/X$, $C_0 \coloneqq T_{S_1} \wh{\otimes}_{\rC(X,R)} T_{S_2} \in \Chba(\Chba_{\cC}(\rC(X,R)))$, and $C_1 \coloneqq T_{S_3} \wh{\otimes}_{\rC(X,R)} T_{S_2} \in \Chba(\Chba_{\cC}(\rC(X,R)))$. We denote by $j$ the natural morphism $C_1 \to C_0$ in $\Chba(\Chba_{\cC}(\rC(X,R)))$. Then, $j$ is a termwise quasi-isomorphism by the induction hypothesis and $(K,\check{K}) \in \TDCH^2$ because of the finiteness of $S_1$. Therefore, $j$ induces a quasi-isomorphism $\Tot(C_1) \to \Tot(C_0)$ in $\Chba_{\cC}(\rC(X,R))$ by \cite{BM21} Proposition 1.6 (iv). Since $\Tot(C_1)$ is naturally identified with $T_{S_4}$, which is strictly exact, again by the induction hypothesis, and $\Tot(C_0)$ is naturally identified with $T_{S_0}$, $T_{S_0}$ is strictly exact.

\vspace{0.1in}
Next, suppose that $X_0 \neq X$ and $R \neq \ens{0}$. Take an $x_1 \in X \setminus X_0$. By the finiteness of $S_0$, $X_0$ is closed. By the regularity of $X$, the characteristic function $\chi \colon X_0 \cup \ens{x_1} \to R$ of $\ens{x_1}$ is continuous. By Lemma \ref{Tietze} (i) applied to $K = X_0 \cup \ens{x_1}$, there exists a $\tl{\chi} \in \rC(X,R)$ such that $\tl{\chi} |_{X_0 \cup \ens{x_1}} = \chi$. We have $\tl{\chi}(x_1) = \chi(x_1) = 1$, and hence $\tl{\chi} \neq 0$ by $R \neq \ens{0}$, while we have $\tl{\chi} |_{j(K)} = \chi |_{j(K)} = 0$ for any $(K,j) \in S_0$. Therefore, the map $\rC(X,R) \to \prod_{(K,j) \in S_0} \rC(K,R)$ in $T_{S_0}$ is not injective. This implies that $T_{S_0}$ is not strictly exact.
\end{proof}

We show an isolated analogue of \cite{BM21} Theorem 3.6 (i) in a completely parallel way.

\begin{thm}
\label{Tate's acyclicity}
Let $S \subset \Tpl/X$. If $R$ is isolated at $0$, then the following are equivalent:
\begin{itemize}
\item[(i)] The set $\Gamma^{\cD_3}_*(S)$ is a derived cover of $\rCfin(X,R)$ in $\cC$.
\item[(ii)] The set $\Gamma^{\cD_3}_*(S)$ is a non-derived cover of $\rCfin(X,R)$ in $\cC$.
\item[(iii)] The set $\set{(K,\iota_{\zeta,X} \circ j)}{(K,j) \in S}$ is a $\zeta$-cover of $\zeta(X)$ or $R = \ens{0}$.
\item[(iv)] The set $\set{(\zeta(K),\zeta(j))}{(K,j) \in S}$ is a $\zeta$-cover of $\zeta(X)$ or $R = \ens{0}$.
\item[(v)] The set $S$ is a $\zeta$-cover of $X$ or $R = \ens{0}$.
\end{itemize}
\end{thm}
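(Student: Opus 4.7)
The plan is to reduce the statement to Theorem \ref{Tate's acyclicity for closed immersion} applied to the Banaschewski compactification $\zeta(X) \in \TDCH$, using Proposition \ref{extension property} as the bridge between $\rCfin$ on $X$ and $\rC$ on $\zeta(X)$. Setting $S' \coloneqq \set{(\zeta(K), \zeta(j))}{(K,j) \in S} \subset \Tpl/\zeta(X)$, the strategy is to identify the data $\Gamma^{\cD_3}_*(S)$ with the data $\Gamma^{(\cC,\zeta(X),R)}_*(S')$ and invoke the previously proved Tate acyclicity theorem.

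First I would establish the equivalences among the three topological conditions (iii), (iv), (v). Since $\zeta(X) \in \TDCH$, the unit $\iota_{\zeta,\zeta(X)}$ is the identity, so by the adjunction between $\zeta$ and $F_{\TDCH}^{\Tpl}$ one has $\zeta(\iota_{\zeta,X}) = \id_{\zeta(X)}$, hence $\zeta(\iota_{\zeta,X} \circ j) = \zeta(j)$ for each $(K,j) \in S$. This matches all of the data (the $\zeta$-embedding condition and the covering condition, both of which are phrased purely in terms of $\zeta(j)(\zeta(K))$ and $\zeta(X)$) occurring in (iii) and (iv). For (iv) $\Leftrightarrow$ (v), observe that $j$ is a $\zeta$-embedding by definition iff $\zeta(j)$ is a homeomorphism onto its image, and since $\zeta$ acts as the identity on morphisms between TDCH spaces, this is in turn equivalent to $\zeta(j)$ being a $\zeta$-embedding; the covering conditions $\bigcup \zeta(j)(\zeta(K)) = \zeta(X)$ are literally the same in the two conditions.

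Second, I would apply Proposition \ref{extension property} to $X$ and to each $K$ appearing in $S$. It yields a natural isomorphism $\rCfin(Y,R) \cong \rC(\zeta(Y),R)$ in $\Alg_{\cC}(R)$; by the naturality in $Y \in \Tpl$, the structure morphism $\Gamma_{\cD_3}(K,j)$ is identified with $\Gamma_{(\cC,\zeta(X),R)}(\zeta(j))$. Since the completed tensor product $\wh{\otimes}_{\rCfin(X,R)}$ is transported to $\wh{\otimes}_{\rC(\zeta(X),R)}$ under these algebra isomorphisms, the entire Tate--\Cech complex (and its derived variant) associated to $\Gamma^{\cD_3}_*(S)$ coincides, under the above identifications, with the Tate--\Cech complex associated to $\Gamma^{(\cC,\zeta(X),R)}_*(S')$. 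Thus (i) (resp.\ (ii)) for $S$ is equivalent to the corresponding statement for $S'$ in $\zeta(X)$.

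Third, I would conclude by splitting into two cases. If every $\zeta(j)$ is a $\zeta$-embedding, then $S'$ consists of closed immersions between TDCH spaces (compactness of $\zeta(K)$ together with Hausdorffness of $\zeta(X)$ make the embedding automatically closed), so Theorem \ref{Tate's acyclicity for closed immersion} applies verbatim to $S'$ in $\zeta(X)$, yielding the equivalence of (i), (ii), and (iv). If some $\zeta(j)$ fails to be a $\zeta$-embedding and $R \neq \ens{0}$, then Theorem \ref{embedding corresponds to homotopy epimorphism} (applied to $\zeta(j)$, which lives between TDCH spaces) guarantees that the structure morphism $\Gamma_{(\cC,\zeta(X),R)}(\zeta(j))$ is neither an epimorphism nor a homotopy epimorphism in $\Alg_{\cC}(R)$, so $\Gamma^{(\cC,\zeta(X),R)}_*(S')$ is neither a derived nor a non-derived cover, hence (i) and (ii) both fail along with (iv).

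The routine part is the bookkeeping of Steps 1--2; the only real obstacle is verifying carefully in Step 2 that the identifications from Proposition \ref{extension property} are compatible with the algebra structures and with the completed tensor products appearing in the Tate--\Cech complexes, so that (derived) strict exactness is genuinely transported across the isomorphism $\rCfin(X,R) \cong \rC(\zeta(X),R)$. Once that is in hand, the case analysis in Step 3 is immediate from the two theorems already proved.
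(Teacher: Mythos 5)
Your proposal is correct and follows essentially the same route as the paper, whose proof simply cites Proposition \ref{extension property}, Theorem \ref{embedding corresponds to homotopy epimorphism}, and Theorem \ref{Tate's acyclicity for closed immersion} — exactly the three ingredients you organise into the reduction to $\zeta(X)$. Your write-up just makes explicit the bookkeeping (transport of the Tate--\v{C}ech data along $\rCfin(Y,R)\cong\rC(\zeta(Y),R)$ and the case split on whether every $\zeta(j)$ is a $\zeta$-embedding) that the paper leaves implicit.
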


\begin{proof}
The assertion immediately follows from Proposition \ref{extension property}, Theorem \ref{embedding corresponds to homotopy epimorphism}, and Theorem \ref{Tate's acyclicity for closed immersion}.
\end{proof}

As an application of Theorem \ref{Tate's acyclicity}, we show an isolated analogue of \cite{BM21} Corollary 3.6, which is itself an analogue of Kiehl's Theorem B (cf.\ \cite{Kie66} Hilfssatz 1.5) in rigid geometry.

\begin{crl}
\label{Theorem B}
Let $S \subset \Tpl/X$ be a finite set of $\zeta$-embeddings. If $R$ is isolated at $0$, then the following are equivalent:
\begin{itemize}
\item[(i)] Either $\bigcup_{(K,j) \in S} \im(\zeta(j)) = \zeta(X)$ or $R = \ens{0}$ holds.
\item[(ii)] For any $M \in \Mod_{\cC}(\rCfin(X,R))$, the (non-derived) Tate--\Cech complex
\begin{eqnarray*}
0 \to M & \to & \prod_{(K,j) \in S} \rCfin(K,R) \wh{\otimes}_{\rCfin(X,R)} M \\
& \to & \prod_{(K_i,j_i)_{i=0}^{1} \in [S]^2} \rCfin(K_0,R) \wh{\otimes}_{\rCfin(X,R)} \rCfin(K_1,R) \wh{\otimes}_{\rCfin(X,R)} M \to \cdots
\end{eqnarray*}
of $M$ associated to $S$ is strictly exact in $\Mod_{\cC}(R)$.
\item[(iii)] For any $M \in \Derba_{\cC}(\rCfin(X,R))$, the total complex of the derived Tate--\Cech bicomplex
\begin{eqnarray*}
0 \to M & \to & \prod_{(K,j) \in S} \rCfin(K,R) \wh{\otimes}_{\rCfin(X,R)}^{\bL} M \\
& \to & \prod_{(K_i,j_i)_{i=0}^{1} \in [S]^2} \rCfin(K_0,R) \wh{\otimes}_{\rCfin(X,R)}^{\bL} \rCfin(K_1,R) \wh{\otimes}_{\rCfin(X,R)}^{\bL} M \to \cdots
\end{eqnarray*}
of $M$ associated to $S$ is strictly exact in $\Mod_{\cC}(R)$.
\end{itemize}
\end{crl}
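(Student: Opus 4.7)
The plan is to establish the cyclic implications (i) $\Rightarrow$ (iii) $\Rightarrow$ (ii) $\Rightarrow$ (i), using Theorem \ref{Tate's acyclicity} at the endpoints and Lemma \ref{derived tensor for embedding is tensor} in the middle step.

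For (i) $\Rightarrow$ (iii), I combine the hypothesis with Theorem \ref{Tate's acyclicity} to deduce that $\Gamma^{\cD_3}_*(S)$ is a derived cover of $\rCfin(X,R)$; in particular the total complex of the derived Tate--\Cech bicomplex for $M = \rCfin(X,R)$ is a zero object of $\Derba_\cC(\rCfin(X,R))$. The endofunctor $(\cdot) \wh{\otimes}^\bL_{\rCfin(X,R)} M$ on $\Derba_\cC(\rCfin(X,R))$ is a triangulated functor of triangulated categories and therefore sends zero objects to zero objects; moreover it sends the derived Tate--\Cech bicomplex of $\rCfin(X,R)$ term by term to that of $M$. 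Hence the derived Tate--\Cech bicomplex of $M$ has zero total complex in $\Derba_\cC(\rCfin(X,R))$, and \cite{BM21} Corollary 1.5 (iii) promotes this to strict exactness in $\Mod_\cC(R)$.

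For (iii) $\Rightarrow$ (ii), I view a fixed $M \in \Mod_\cC(\rCfin(X,R))$ as a chain complex concentrated in degree $0$ and argue by iteration. Given $(K_{i_0},j_{i_0}),\ldots,(K_{i_n},j_{i_n}) \in S$, I apply Lemma \ref{derived tensor for embedding is tensor} to the rightmost factor to replace $\rCfin(K_{i_n},R) \wh{\otimes}^\bL_{\rCfin(X,R)} M$ by the non-derived $\rCfin(K_{i_n},R) \wh{\otimes}_{\rCfin(X,R)} M$, which is again an object of $\Mod_\cC(\rCfin(X,R))$; I then iterate the argument outward, each time using Lemma \ref{derived tensor for embedding is tensor} with the freshly assembled non-derived tensor in the role of the base module, with flatness from Corollary \ref{acyclic implies flat} guaranteeing that this intermediate object remains well behaved under the subsequent tensor. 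This identifies the derived Tate--\Cech complex of $M$ with the non-derived one in $\Derba_\cC(\rCfin(X,R))$, so strict exactness of the former forces strict exactness of the latter.

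For (ii) $\Rightarrow$ (i), I specialise to $M = \rCfin(X,R)$. The resulting Tate--\Cech complex is precisely the one appearing in Theorem \ref{Tate's acyclicity} for $\Gamma^{\cD_3}_*(S)$, and it is strictly exact by hypothesis. Since $S$ consists of $\zeta$-embeddings, Theorem \ref{embedding corresponds to homotopy epimorphism} shows that each structure morphism $\rCfin(X,R) \to \rCfin(K,R)$ is a homotopy epimorphism (hence an epimorphism) in $\Alg_\cC(R)$, so $\Gamma^{\cD_3}_*(S)$ is a non-derived cover of $\rCfin(X,R)$. Theorem \ref{Tate's acyclicity} then yields the $\zeta$-cover condition (i). The main subtlety is the bookkeeping in (iii) $\Rightarrow$ (ii), where one must invoke Lemma \ref{derived tensor for embedding is tensor} one factor at a time and keep track of the algebra over which each intermediate tensor is taken; once this is set up, the statement reduces cleanly to the known acyclicity theorem of the previous section.
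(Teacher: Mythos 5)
Your overall route coincides with the paper's: reduce to the regular module via Theorem \ref{Tate's acyclicity}, pass between the derived and non-derived complexes via Lemma \ref{derived tensor for embedding is tensor} (the paper also invokes Lemma \ref{intersection} to collapse the iterated tensors, which your factor-by-factor iteration reproduces), and read off (i) from the case $M = \rCfin(X,R)$. Steps (iii) $\Rightarrow$ (ii) and (ii) $\Rightarrow$ (i) are essentially the paper's arguments, modulo the routine appeal to \cite{BM21} Proposition 1.6 (iv) to pass from termwise quasi-isomorphisms of bicomplexes to quasi-isomorphisms of total complexes.

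The step (i) $\Rightarrow$ (iii) as you have written it, however, contains a genuine jump. From the two facts you state --- that $(\cdot) \wh{\otimes}_{\rCfin(X,R)}^{\bL} M$ preserves zero objects and that it sends the derived Tate--\Cech bicomplex of $\rCfin(X,R)$ term by term to that of $M$ --- it does not follow that the total complex of the latter bicomplex is zero: having zero total complex is not a termwise property, and an exact functor of triangulated categories applied termwise to a complex of objects of $\Derba_{\cC}(\rCfin(X,R))$ is only guaranteed to commute with \emph{finite} iterated cones, whereas the (non-alternating) Tate--\Cech complex here is unbounded in one direction. What is actually needed is the compatibility of the derived tensor with the totalisation of this bicomplex. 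The paper supplies exactly this: it replaces $M$ by its projective resolution $\cP_{\cC}^{A}(M)$ (with $A = \rCfin(X,R)$), observes that the derived bicomplex of $M$ is termwise quasi-isomorphic to $C \wh{\otimes}_{A} \cP_{\cC}^{A}(M)$ by termwise strong flatness of the resolution, shows each column $C \wh{\otimes}_{A} P$ is strictly exact for $P$ strongly flat (this is condition (ii) for strongly flat modules, deduced first from the strict exactness of $C$ given by Theorem \ref{Tate's acyclicity}), and only then applies \cite{BM21} Proposition 1.6 (iv) to conclude that the total complex is quasi-isomorphic to zero. Your argument needs this resolution step, or some equivalent justification that $\Tot$ commutes with $(\cdot) \wh{\otimes}_{A}^{\bL} M$ on this bicomplex, to be complete.
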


\begin{proof}
We abbreviate $\rCfin(X,R)$ to $A$. By Theorem \ref{Tate's acyclicity} and \cite{BM21} Lemma 2.20 (i), both of the conditions (ii) and (iii) applied to the case where $M$ is the regular $A$-module object imply the condition (i). We show that the condition (i) implies the conditions (ii) and (iii). If $R = \ens{0}$, then $\Mod_{\cC}(R)$ consists of zero objects. Therefore, we may assume $R \neq \ens{0}$. By Proposition \ref{extension property}, we may assume that $X$ is totally disconnected compact Hausdorff and $S$ consists closed immersions. We denote by $C = (C_n)_{n \in \Z}$ the \Cech complex
\begin{eqnarray*}
0 \to A \to \prod_{(K,j) \in S} \rCfin(K,R) \to \prod_{(K_i,j_i)_{i=0}^{1} \in [S]^2} \rCfin(K_0 \times_X K_1,R) \to \cdots
\end{eqnarray*}
associated to $S$, which is naturally isomorphic in $\Chba_{\cC}(A)$ to (and hence will be identified with) the (non-derived) Tate--\Cech complex of $A$ associated to $S$ by Lemma \ref{intersection}, and hence is strictly exact by the condition (i) and Theorem \ref{Tate's acyclicity}. For any strongly flat object $M$ of $\Mod_{\cC}(A)$, the strict exactness of $C \wh{\otimes}_{A} M$ follows from the strong flatness of $(\cdot) \wh{\otimes}_{A} M$. This implies the condition (ii) restricted to the case where $M$ is strongly flat.

\vspace{0.1in}
First, let $M \in \Derba_{\cC}(A)$. We abbreviate $(C_n \wh{\otimes}_A^{\bL} M)_{n \in \Z} \in \Chba(\Derba_{\cC}(A))$, i.e.\ the derived Tate--\Cech bicomplex of $M$ associated to $S$, to $C(M)$. The natural morphism $C(M) \to C \wh{\otimes}_{A} \cP_{\cC}^{A}(M)$ in $\Chba(\Derba_{\cC}(A))$ is represented by a morphism in $\Chba(\Chba_{\cC}(A))$ which is a termwise quasi-isomorphism by the termwise strong flatness of $\cP_{\cC}^{A}(M)$ (cf.\ \cite{BK20} Proposition 3.11), and the zero morphism $C \wh{\otimes}_{A} \cP_{\cC}^{A}(M) \to 0$ in $\Chba(\Chba_{\cC}(A))$ is a termwise quasi-isomorphism by the condition (ii) for strongly flat objects. This implies that the zero morphism $\Tot(C(M)) \to 0$ in $\Chba_{\cC}(A)$ is a quasi-isomorphism by Proposition 1.6 (iv). Therefore, $\Tot(C(M))$ is strictly exact by \cite{BM21} Proposition 1.6 (iii). This implies the condition (iii).

\vspace{0.1in}
Next, let $M \in \Mod_{\cC}(A)$. We denote by $C(M) \in \Chba(\Chba_{\cC}(A))$ the canonical representative 
\begin{eqnarray*}
0 & \to & \cP_{\cC}^{A}(M) \to \prod_{(K,j) \in S} \cP_{\cC}^{A}(\rCfin(K,R)) \wh{\otimes}_{A} \cP_{\cC}^{A}(M) \\
& \to & \prod_{(K_i,j_i)_{i=0}^{1} \in [S]^2} \cP_{\cC}^{A}(\cP_{\cC}^{A}(\rCfin(K_0,R)) \wh{\otimes}_{A} \cP_{\cC}^{A}(\rCfin(K_1,R))) \wh{\otimes}_{A} \cP_{\cC}^{A}(M) \to \cdots
\end{eqnarray*}
of the derived Tate--\Cech bicomplex of $M$ regarded as an object of $\Chba(A)$ concentrated in degree $0$. Then, the natural morphism $p \colon C(M) \to C \wh{\otimes}_{A} M$ in $\Chba(\Chba_{\cC}(A))$ represents a morphism in $\Chba(\Derba_{\cC}(A))$ which is a termwise isomorphism by Lemma \ref{derived tensor for embedding is tensor} and Lemma \ref{intersection}, and hence is a termwise quasi-isomorphism by \cite{BM21} Corollary 1.5 (i). This implies that $\Tot(p) \colon \Tot(C(M)) \to C \wh{\otimes}_{A} M$ is a quasi-isomorphism by \cite{BM21} Proposition 1.6 (iv). Since we have shown that the condition (i) implies the condition (iii), $\Tot(C(M))$ is strictly exact. Therefore, the (non-derived) Tate--\Cech complex $C \wh{\otimes}_{A} M$ of $M$ is strictly exact.
\end{proof}

\section{Effective descent for modules over Banach algebras of functions valued in discrete Banach rings}
\label{Effective descent for modules over Banach algebras of functions valued in discrete Banach rings}

In this section, we prove effective descent for the categories of Banach modules over the algebras $\rCfin(X,R)$, and their derived categories, with respect to the topology determined by finite families of $\zeta$-embeddings. As we have proved the analogues of the main results of \cite{BM21} for the algebras $\rCfin(X,R)$, for $R$ isolated at $0$, the proof of effective descent given here will follow closely that presented in \cite{BM21} \S 4.

\vspace{0.1in}
Suppose that $R$ is isolated at $0$. Although $\cD_3$ in \cite{BM21} \S 4 was assumed to be an admissible triple, the formulation of derived and non-derived effective descent works for $\rCfin(X,R)$ in a completely parallel way. Since we omit overlapping explanations here, except for definitions and notations, it is better for the reader to refer to \cite{BM21} \S 4 for details.

\vspace{0.1in}
For a $(Y,\Phi) \in \Tpl/X$, we consider the monads $\cTnd(\Phi) = \Phi_* \Phi^*$ on $\Mod_{\cC}(\rCfin(X,R))$ and $\cTd(\Phi) = \Phi_* \bL \Phi^*$ on $\Derba_{\cC}(\rCfin(X,R))$ defined by the adjunctions
\begin{eqnarray*}
\Phi^* \colon \Mod_{\cC}(\rCfin(X,R)) & \leftrightarrows & \Mod_{\cC}(\rCfin(Y,R)) \colon \Phi_* \\
\bL \Phi^* \colon \Derba_{\cC}(\rCfin(X,R)) & \leftrightarrows & \Derba_{\cC}(\rCfin(Y,R)) \colon \Phi_*,
\end{eqnarray*}
where $\Phi^*$ is the extension of scalars functor $(\cdot) \wh{\otimes}_{\rCfin(X,R)} \rCfin(Y, R)$, $\bL \Phi^*$ is the derived extension of scalars functor $(\cdot) \wh{\otimes}_{\rCfin(X,R)}^{\bL} \rCfin(Y, R)$, and $\Phi_*$ is the restriction of scalars functor.

\vspace{0.1in}
Let $\cT$ denote $\cTnd(\Phi)$ (resp.\ $\cTd(\Phi)$), and abbreviate $\Mod_{\cC}$ (resp.\ $\Derba_{\cC}$) to $\rM$. There is an adjunction
\begin{eqnarray*}
F_{\cT} \colon \rM(\rCfin(X,R)) & \leftrightarrows & \Mod(\cT) \colon U_{\cT},
\end{eqnarray*}
where $\Mod(\cT)$ is the category of modules over the monad $\cT$, $F_{\cT}$ is the free module functor, and $U_{\cT}$ is the forgetful functor. We denote by $L_{\cT}$ the comonad $F_{\cT} \circ U_{\cT}$, and by $\Desc_{\rM(\rCfin(X,R))}(\Phi)$ the category of left $L_{\cT}$-comodule objects of $\Mod(\cT)$. An object of $\Desc_{\rM(\rCfin(X,R))}(\Phi)$ can be described as a triple $(M, \rho, \sigma)$ of an $M \in \rM(\rCfin(X,R))$ and morphisms $\rho \colon \cT(M) \to M$ and $\sigma \colon M \to \cT(M)$ in $\rM(\rCfin(X,R))$ that give respectively the left action of $\cT$ and the left coaction of $L_{\cT}$ and that satisfy some further compatibility properties. See \cite{Bal12} Remark 1.4 for details.

\vspace{0.1in}
We define the {\it comparison functor} $Q_{\cT}$ as the functor
\begin{eqnarray*}
\rM(\rCfin(X,R)) \to \Desc_{\rM(\rCfin(X,R))}(\Phi)
\end{eqnarray*}
assigning to each $M \in \rM(\rCfin(X,R))$ the tuple $(\cT(M), \epsilon_M, \cT(\eta_M))$, where $\epsilon$ is the counit $\cT \Rightarrow \id_{\rM(\rCfin(X,R))}$ of the adjunction and $\eta$ is the unit $\id_{\rM(\rCfin(X,R))} \Rightarrow \cT$ of the adjunction. We say that $\Phi$ {\it satisfies non-derived} (resp.\ {\it derived}) {\it effective descent} if the comparison functor $Q_{\cTd}$ (resp.\ $Q_{\cTnd}$) is an equivalence. We will apply the notion of the descent category in the following specific case.

\vspace{0.1in}
Let $S \subset \Tpl/X$. We put $Y \coloneqq \coprod_{(K,j) \in S} K$, and denote by $\Phi$ the canonical morphism $Y \to X$ induced by the universal property of the coproduct of $(j)_{(K,j) \in S}$. We say that $S$ satisfies {\it non-derived} (resp.\ {\it derived}) {\it effective descent} if so does $\Phi$. As an isolated analogue of \cite{BM21} Theorem 4.1, which itself is an analogue of the equivalence between finite Banach modules over an affinoid algebra and coherent sheaves over the rigid analytic space associated to it (cf.\ \cite{Kie66} Theorem 1.2), we give our main result on derived and non-derived effective descent in a completely parallel way.

\begin{thm}
\label{derived effective descent}
If $S$ consists of $\zeta$-embeddings and $R$ is isolated at $0$, then the following are equivalent:
\begin{itemize}
\item[(i)] The set $S$ is a $\zeta$-cover of $X$ or $R = \ens{0}$.
\item[(ii)] The functor $\bL \Phi^*$ is faithful.
\item[(iii)] The functor $\bL \Phi^*$ is conservative.
\item[(iv)] The functor $\Phi^*$ is faithful.
\item[(v)] The functor $\Phi^*$ is conservative.
\item[(vi)] The set $S$ satisfies derived effective descent.
\item[(vii)] The set $S$ satisfies non-derived effective descent.
\end{itemize}
\end{thm}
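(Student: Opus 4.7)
The plan is to follow the proof of \cite{BM21} Theorem 4.1 in loc.\ cit.\ \S 4, replacing each input used there by its isolated-at-$0$ counterpart established earlier in this paper. The degenerate case $R = \ens{0}$ is automatic, since $\Mod_{\cC}(\rCfin(X,R))$ and $\Mod_{\cC}(\rCfin(Y,R))$ both reduce to zero categories and every condition holds vacuously. I therefore assume $R \neq \ens{0}$ throughout.

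For the equivalences (i) $\Leftrightarrow$ (ii) $\Leftrightarrow$ (iii) $\Leftrightarrow$ (iv) $\Leftrightarrow$ (v), the implications (i) $\Rightarrow$ (iii) and (i) $\Rightarrow$ (v) follow from Corollary \ref{Theorem B}: the strict exactness of the (derived) Tate--\Cech complex with coefficients in an arbitrary module $M$ forces the augmentation $M \to \prod_{(K,j) \in S_0} \rCfin(K,R) \wh{\otimes}_{\rCfin(X,R)} M$, for some appropriate finite $S_0 \subset S$, to be a strict monomorphism, so that $\Phi^*(M) \cong 0$, resp.\ $\bL \Phi^*(M) \cong 0$, forces $M \cong 0$. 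The implications (ii) $\Rightarrow$ (iii) and (iv) $\Rightarrow$ (v) hold for any additive functor, while (iii) $\Rightarrow$ (ii) and (v) $\Rightarrow$ (iv) follow by combining conservativity with the strong exactness of $\Phi^*$ and $\bL \Phi^*$ supplied by Lemma \ref{flatness of I} and Corollary \ref{acyclic implies flat}. For the converse (v) $\Rightarrow$ (i) (and (iii) $\Rightarrow$ (i) likewise), suppose that $S$ is not a $\zeta$-cover, so that for every finite $S_0 \subset S$ there exists $p \in \zeta(X) \setminus \bigcup_{(K,j) \in S_0} \im(\zeta(j))$. Pick a clopen neighbourhood $\ol{U} \subset \zeta(X)$ of $p$ disjoint from $\bigcup_{(K,j) \in S_0} \im(\zeta(j))$ and set $U \coloneqq \iota_{\zeta,X}^{-1}(\ol{U}) \in \CO(X)$. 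Then $M \coloneqq 1_{X,R,U} \cdot \rCfin(X,R)$ is a nonzero submodule of $\rCfin(X,R)$ by $R \neq \ens{0}$, while the condition $j(K) \cap U = \emptyset$ for each $(K,j) \in S_0$ forces $\Phi^*(M) = 0$, contradicting conservativity.

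For the descent equivalences (i) $\Leftrightarrow$ (vi) and (i) $\Leftrightarrow$ (vii), the implications (i) $\Rightarrow$ (vi), (vii) are an application of the quasi-abelian, respectively bounded-above derived, analogue of Beck's monadicity theorem to the adjunction $F_{\cT} \dashv U_{\cT}$: the conservativity of $\Phi^*$, resp.\ $\bL \Phi^*$, is (v), resp.\ (iii), and the preservation and reflection of the relevant strict, resp.\ homotopy, coequalizers is supplied by the strong flatness of $\rCfin(Y,R)$ over $\rCfin(X,R)$ obtained from Lemma \ref{flatness of I} and Corollary \ref{acyclic implies flat}. The converse implications (vi) $\Rightarrow$ (iii) and (vii) $\Rightarrow$ (v) are formal: the forgetful functor $\Desc_{\Mod_{\cC}(\rCfin(X,R))}(\Phi) \to \Mod_{\cC}(\rCfin(X,R))$ obtained via $U_{\cT}$ is conservative, and its composition with the equivalence $Q_{\cT}$ recovers $\Phi^*$, resp.\ $\bL \Phi^*$, up to natural isomorphism, so conservativity is transported back.

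The principal obstacle lies in the last step, namely formulating and applying Beck's (co)monadicity criterion in the quasi-abelian and bounded-above derived settings, keeping careful track of which (co)equalizers are strict or homotopical and of the preservation properties of $\Phi^*$ and $\bL \Phi^*$ with respect to them. Once this technical framework is imported from \cite{BM21} \S 4, the translation of the argument to our setting requires no new ideas beyond the substitutions listed above.
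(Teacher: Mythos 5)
Your implication graph differs from the paper's: the paper routes everything through (i) (proving (ii)\,$\Rightarrow$\,(iv), (iii)\,$\Rightarrow$\,(v), (iv)\,$\Rightarrow$\,(i), (v)\,$\Rightarrow$\,(i), (i)\,$\Rightarrow$\,(ii), (i)\,$\Rightarrow$\,(vii), (ii)\,$\Leftrightarrow$\,(vi), (ii)\,$\Rightarrow$\,(iii), (vii)\,$\Rightarrow$\,(iv)), whereas you try to close the loop with the equivalence ``faithful $\Leftrightarrow$ conservative''. This creates a genuine gap: your claim that ``(iv)\,$\Rightarrow$\,(v) holds for any additive functor'' is false, and it is the \emph{only} arrow leaving condition (iv) in your graph. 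A faithful additive functor between quasi-Abelian categories need not be conservative: a bounded bijective morphism that is not an isomorphism (which exists in $\Mod_{\cC}(R)$ precisely because the Open Mapping Theorem fails for $R$ isolated at $0$) has vanishing kernel and cokernel, so faithfulness together with strong exactness of $\Phi^*$ cannot detect its failure to be strict. (The triangulated case (ii)\,$\Rightarrow$\,(iii) is fine, since a faithful triangulated functor reflects zero objects and hence isomorphisms via cones.) Without a replacement for this arrow --- the paper's is (iv)\,$\Rightarrow$\,(i) --- condition (iv) is not shown to imply the others.

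There is a second gap in your argument for (v)\,$\Rightarrow$\,(i). You pick a finite $S_0 \subset S$ and a clopen $U$ disjoint from $\bigcup_{(K,j) \in S_0} \im(\zeta(j))$, and set $M \coloneqq 1_{X,R,U}\,\rCfin(X,R)$; but $\Phi^*$ is base change along $\rCfin(X,R) \to \rCfin(Y,R)$ with $Y = \coprod_{(K,j) \in S} K$ over \emph{all} of $S$, and members of $S \setminus S_0$ may meet $U$, in which case $\Phi^*(M) \cong 1_{Y,R,\Phi^{-1}(U)}\,\rCfin(Y,R) \neq \ens{0}$. Your construction therefore only yields a contradiction when $S$ is finite; for infinite $S$ one must produce a single nonzero module annihilated by base change to \emph{every} member of $S$ simultaneously (the paper uses $\rC(\ens{x},R)$ for a suitable point $x$, following the third paragraph of the proof of \cite{BM21} Theorem 4.1), and the passage from ``no finite subfamily covers $\zeta(X)$'' to the existence of such a module is exactly the step you have skipped. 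Finally, for (i)\,$\Rightarrow$\,(vi) you invoke a ``bounded-above derived analogue of Beck's monadicity theorem''; triangulated categories lack the (co)equalizers such a theorem needs, which is why \cite{BM21}, following \cite{Bal12}, deduces derived effective descent from faithfulness of $\bL\Phi^*$ via Balmer's descent criterion rather than via Beck. You flag this as the main obstacle, but leaving it unresolved leaves the descent half of the statement unproved.
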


\begin{proof}
The equivalence between the condition (ii) and the condition (vi), the implication from the condition (ii) to the condition (iii), and the implication from the condition (vii) to the condition (iv) follow from arguments completely parallel to those in the first paragraph of the proof of \cite{BM21} Theorem 4.1.

\vspace{0.1in}
The implication from the condition (ii) (resp.\ (iii)) to the condition (iv) (resp.\ (v)) follows from an argument completely parallel to that in the second paragraph of the proof of \cite{BM21} Theorem 4.1 except that we replace the fact that the natural transform $j_S$ is a natural isomorphism based on \cite{BM21} Lemma 2.26 by the direct application of Lemma \ref{derived tensor for embedding is tensor}.

\vspace{0.1in}
The implication from the condition (iv) (resp.\ (v)) to the condition (i) follows from an argument completely parallel to that in the third paragraph of the proof of \cite{BM21} Theorem 4.1 except that we replace \cite{BM21} Lemma 3.3 by its isolated analogue Lemma \ref{intersection}, because a morphism $\rC(\ens{x},R) \to \rC(\ens{x},R)$ in $\Mod_{\cC}(R)$ for an $x \in X$ cannot be simultaneously the zero morphism and the identity morphism unless $R = \ens{0}$.

\vspace{0.1in}
The implications from the condition (i) to the conditions (ii) and (vii) follow from arguments completely parallel to those in the fourth and rest paragraphs of the proof of \cite{BM21} Theorem 4.1 except that we replace \cite{BM21} Corollary 3.4 by its isolated analogue Corollary \ref{Theorem B}.
\end{proof}

\section{Homological algebra for discrete Banach modules}
\label{Homological algebra for discrete Banach modules}

Up to this point, we have proved analogues of results in \cite{Mih14} and \cite{BM21}. From this section on, we provide new results and new properties of the algebras of continuous functions valued on a Banach ring $R$ isolated at $0$. We define the category of Banach $R$-modules isolated at $0$. We prove in Proposition \ref{Modiso is quasi-Abelian} that this category is quasi-Abelian, and in Proposition \ref{Modiso is monoidal subcategory} that it is a closed symmetric monoidal subcategory of the category of all Banach $R$-modules. We also describe its bounded above derived category as a full monoidal subcategory of the category $\Derba_\cC(R)$ and determine its generators in Propositions \ref{isolated free module}. Then, in Theorem \ref{absorbing law}, we prove that the canonical morphism $\rCfin(X, M_1) \wh{\otimes}_R M_2 \to \rCfin(X, M_1 \wh{\otimes}_R M_2)$ is always an isomorphism when the non-Archimedean projective tensor product is considered (and hence all modules are non-Archimedean), and we give counterexamples for the Archimedean tensor product.

\vspace{0.1in}
We denote by $\Modiso_{\cC}(R) \subset \Mod_{\cC}(R)$ and $\Algiso_{\cC}(R) \subset \Alg_{\cC}(R)$ the full subcategories of objects isolated at $0$. We notice that Proposition \ref{isolated at 0 implies discrete} implies that these inclusions of categories are also replete, i.\ e.\ they are closed by isomorphisms of objects. By Lemma \ref{Cfin is closed} (i), the correspondence $(X,M) \mapsto \rCfin(X,M)$ gives functors $\rCfin^{\iso} \colon \Tpl \times \Modiso_{\cC}(R) \to \Modiso_{\cC}(R)$ and $\rCfin^{\iso} \colon \Tpl \times \Algiso_{\cC}(R) \to \Algiso_{\cC}(R)$. We denote by $F^{\iso}$ the inclusion functor $\Modiso_{\cC}(R) \hookrightarrow \Mod_{\cC}(R)$. First, we study several operations on $\Mod_{\cC}(R)$ restricted to $\Modiso_{\cC}(R)$.

\begin{prp}
\label{Modiso is quasi-Abelian}
\begin{itemize}
\item[(i)] Let $M \in \Modiso_{\cC}(R)$. For any $R$-submodule $N \subset M$, $N$ (resp.\ $M/N$) forms an object of $\Modiso_{\cC}(R)$ with respect to the restriction of $\n{\cdot}_M$ (resp.\ the quotient seminorm).
\item[(ii)] The functor $F^{\iso}$ preserves and reflects kernels and cokernels.
\item[(iii)] The category $\Modiso_{\cC}(R)$ is closed by finite direct sums in $\Mod_\cC(R)$.
\item[(iv)] The category $\Modiso_{\cC}(R)$ is quasi-Abelian.
\end{itemize}
\end{prp}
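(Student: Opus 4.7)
The key unifying observation I would use throughout is Proposition \ref{isolated at 0 implies discrete}: being isolated at $0$ is equivalent to the induced topology being discrete. So for any $M \in \Modiso_{\cC}(R)$ there is an $\epsilon > 0$ with $\set{m \in M}{\n{m}_M < \epsilon} = \ens{0}$, and this single $\epsilon$ will drive all estimates.

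For (i), given a submodule $N \subset M$, the restricted norm on $N$ has the same threshold $\epsilon$ and is complete (Cauchy sequences in $N$ are Cauchy in $M$, hence eventually constant, hence convergent in $N$ since $N$ is closed in the discrete space $M$). For the quotient $M/N$, first note that $N$ is automatically closed, so the quotient seminorm is defined. The key point is that if $\ol{m} \in M/N$ is nonzero, then $m \notin N$, whence $m - n \neq 0$ for every $n \in N$, so $\n{m-n}_M \geq \epsilon$; taking the infimum gives $\n{\ol{m}}_{M/N} \geq \epsilon$. This simultaneously shows that the quotient seminorm is a genuine norm and that $M/N$ is isolated at $0$. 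Completeness of $M/N$ is then immediate since Cauchy sequences in a space isolated at $0$ are eventually constant.

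For (ii), recall that in $\Mod_{\cC}(R)$ the kernel of $f \colon M \to N$ is $\ker(f)$ with the restricted norm, while the cokernel is $N/\ol{\im(f)}$ with the quotient seminorm. When $M,N \in \Modiso_{\cC}(R)$, part (i) shows that $\ker(f)$ is in $\Modiso_{\cC}(R)$; moreover $\im(f)$ is already closed in $N$ because $N$ is discrete, so $N/\ol{\im(f)} = N/\im(f)$ is in $\Modiso_{\cC}(R)$ again by (i). Thus the kernels and cokernels of $\Mod_{\cC}(R)$ already live in the full subcategory $\Modiso_{\cC}(R)$, which both shows that $\Modiso_{\cC}(R)$ has all kernels and cokernels and that the fully faithful inclusion $F^{\iso}$ preserves and reflects them. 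Part (iii) is immediate: for finite direct sums taken in $\Mod_{\cC}(R)$ (with either the max norm on $\NBAb$ or the sum norm on $\BAb$), the minimum of the thresholds of the factors is a threshold for the direct sum.

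For (iv), $\Modiso_{\cC}(R)$ is additive by (iii) together with the fact that the zero object is trivially isolated at $0$, and it has all kernels and cokernels by (ii). The remaining content of the quasi-Abelian axiom (cf.\ \cite{Sch99}) is that strict epimorphisms (resp.\ strict monomorphisms) are stable under pullback (resp.\ pushout). Since $F^{\iso}$ preserves kernels and cokernels by (ii), a morphism is strict in $\Modiso_{\cC}(R)$ if and only if it is strict in the ambient quasi-Abelian category $\Mod_{\cC}(R)$. Moreover, pullbacks and pushouts in $\Modiso_{\cC}(R)$ can be constructed in $\Mod_{\cC}(R)$ as subobjects of direct sums or quotients thereof, and by (i)--(iii) these constructions stay inside $\Modiso_{\cC}(R)$; hence $F^{\iso}$ also preserves and reflects pullbacks and pushouts of the relevant shape. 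Stability of strict morphisms under these operations therefore transfers from $\Mod_{\cC}(R)$ to $\Modiso_{\cC}(R)$. The only delicate point I anticipate is the bookkeeping in (i) around the quotient seminorm being a norm (rather than merely a seminorm) and the verification that pullbacks/pushouts of isolated objects remain isolated, but both reduce to the same elementary $\epsilon$-threshold argument used throughout.
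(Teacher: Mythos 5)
Your proposal is correct and takes essentially the same route as the paper's (very terse) proof: (i) via the discreteness criterion of Proposition \ref{isolated at 0 implies discrete}, (ii) as a consequence of (i), (iii) via the max norm on the product, and (iv) by transferring the quasi-Abelian axioms from the ambient category $\Mod_{\cC}(R)$ using (ii) and (iii). You merely supply the $\epsilon$-threshold and closedness details that the paper leaves implicit.
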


We recall that a functor $G \colon \cA \to \cB$ between additive categories is said to \emph{reflect kernels} (resp.\ \emph{reflect cokernels}) if given a morphism $f$ in $\cA$ such that $G(f)$ is a strict monomorphism (resp.\ strict epimorphism) then $f$ is a strict monomorphism (resp. strict epimorphism).

\begin{proof}
The assertion (i) follows from Proposition \ref{isolated at 0 implies discrete}, the assertion (ii) follows from the assertion (i). Let us show the assertion (iii). Let $M, N \in \Modiso_{\cC}(R)$, then $M \oplus N \cong (M \times N, \v{\cdot}_{\max{}})$, where $|(m,n)|_{\max{}} = \max \{ |m|, |n|\}$. Therefore, $M \oplus N \in \Modiso_{\cC}(R)$. Then, the assertion (iv) follows from the assertion{s} (ii) and (iii) because $\Mod_{\cC}(R)$ is quasi-Abelian.
\end{proof}

If $R$ is isolated at $0$, then the category $\Modiso_{\cC}(R)$ is obviously non-trivial. We provide some basic examples of objects of $\Modiso_{\cC}(R)$ in order to show that this category can be non-trivial also when $R$ is not isolated at $0$.

\begin{exm}
\label{modules isolated at zero}
\begin{itemize}
\item[(i)] The fundamental examples of Banach rings isolated at zero are $\Z_\infty$ and $\Z_{\triv}$, having rich categories of modules isolated at zero.
\item[(ii)] If $R$ is a complete (Archimedean or non-Archimedean) non-trivially valued field, then the category $\Modiso_{\cC}(R)$ consists of only zero objects.
\item[(iii)] If $R$ is $\Zp$ equipped with the $p$-adic absolute value, then $\F_p$ equipped with the trivial norm is an object of $\Modiso_{\cC}(R)$, via the identification $\F_p \cong \Z_p/(p)$. Therefore, $\Modiso_{\cC}(R)$ contains the category of $\F_p$-vector spaces equipped with the trivial norm.
\item[(iv)] If $k$ is an arbitrary field and $k \dbrack{T}$ is the ring of formal power-series over $k$ equipped with the $T$-adic norm, then $k$ equipped with the trivial norm is a Banach module over $k \dbrack{T}$ isolated at $0$ via the identification $k \cong k \dbrack{T}/(T)$.
\end{itemize}
\end{exm}

Suppose that $R$ is isolated at $0$. Then, for any $M \in \Modiso_{\cC}(R)$, $\rCfin(X,M) \in \Modiso_{\cC}(R)$ forms an object of $\Modiso_{\cC}(\rCfin(X,R))$ with respect to the pointwise scalar multiplication. The correspondence $M \mapsto \rCfin(X,M)$ also gives functors $\rCfin^{\iso}(X,\cdot) \colon \Modiso(R) \to \Modiso(\rCfin(X,R))$ and $\rCfin^{\iso}(X,\cdot) \colon \Algiso(R) \to \Algiso(\rCfin(X,R))$. We recall that for any $M \in \Mod_{\cC}(R)$, $\rCfin(X,M)$ forms a $\rCfin(X,R)$-module even when $R$ or $M$ is not isolated at $0$, and hence we use the algebraic module structure without such an assumption, as we did in \S \ref{Berkovich spectrum for functions valued in discrete Banach rings}.

\vspace{0.1in}
By Proposition \ref{Modiso is quasi-Abelian} (ii), $F^{\iso}$ preserves and reflects the strict exactness of a null sequence, and hence the class of strict exact sequences of $\Modiso_{\cC}(R)$ agrees with the restriction to $\Modiso_{\cC}(R)$ of the class of strict exact sequences of $\Mod_{\cC}(R)$. Nevertheless, it is meaningful to consider $\Modiso_{\cC}(R)$, because, for example, the following proposition does not make sense for $\Mod_{\cC}(R)$.

\begin{prp}
\label{Cfin is strongly exact}
If $R$ is isolated at $0$, then $\rCfin^{\iso}(X,\cdot)\colon \Modiso(R) \to \Modiso(\rCfin(X,R))$ is a strongly exact functor.
\end{prp}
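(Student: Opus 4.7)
The plan is to verify that $\rCfin^{\iso}(X,\cdot)$ preserves both kernels and cokernels of arbitrary morphisms in $\Modiso_{\cC}(R)$, which, for an additive functor between quasi-abelian categories, is equivalent to strong exactness.

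For the kernel part, fix a morphism $f \colon M \to N$ in $\Modiso_{\cC}(R)$. By Proposition \ref{Modiso is quasi-Abelian} (i), its categorical kernel is $\ker f$ equipped with the restriction of $\n{\cdot}_M$. A continuous map $g \colon X \to M$ with finite image satisfies $f \circ g = 0$ if and only if it factors set-theoretically through $\ker f$. Since $M$ (and hence $\ker f$) is isolated at $0$, the subspace topology on $\ker f$ is discrete, so the factorization is automatically continuous and of finite image. This produces a natural bijection $\rCfin(X, \ker f) \to \ker \rCfin^{\iso}(X, f)$ which is an isometry because the norm on $\ker f$ is the restriction of the norm on $M$, whence a strict isomorphism in $\Modiso_{\cC}(\rCfin(X,R))$.

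For the cokernel part, note that $\coker f = N/\im f$, the closure being unnecessary because $N$ is discrete. The main input is Proposition \ref{idempotent generates C(X,R)} (ii), which expresses any element of $\rCfin(X, \coker f)$ as a finite sum $\sum_{U \in \cU} 1_{X, R, U} e(U)$ over a finite clopen partition $\cU$ of $X$ with $e \colon \cU \to \coker f$. Lifting each $e(U)$ to a representative $\tl{e}(U) \in N$ of near-minimal norm produces a finite-image continuous lift in $\rCfin(X, N)$, so the canonical morphism $\rCfin(X, N) \to \rCfin(X, \coker f)$ is a strict epimorphism. An analogous argument applied to $g \in \rCfin(X, N)$ with $g(X) \subseteq \im f$ shows that $g$ can be lifted to $h \in \rCfin(X, M)$ with $f \circ h = g$, by choosing a preimage for each value in the finite image and pasting along the clopen partition $\{g^{-1}(\{n_i\})\}_i$; hence the kernel of $\rCfin(X, N) \to \rCfin(X, \coker f)$ coincides with the image of $\rCfin^{\iso}(X, f)$.

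The main obstacle is verifying that the resulting natural bijection $\coker \rCfin^{\iso}(X, f) \to \rCfin(X, \coker f)$ is an isometry rather than merely a bounded bijection. The two relevant norms are the quotient norm $\inf_{h}\sup_x \n{g(x) + f(h(x))}_N$ and the supremum $\sup_x \inf_m \n{g(x) + f(m)}_N$; the inequality $\leq$ is immediate, while the reverse requires producing, for each $\epsilon > 0$, a single lift $h$ that simultaneously achieves the local infima up to $\epsilon$ in $x$. This is where the finite-image property is decisive: because $g(X) = \{n_1, \ldots, n_k\}$ is finite, one may select representatives $m_i \in M$ with $\n{n_i + f(m_i)}_N$ within $\epsilon$ of $\inf_m \n{n_i + f(m)}_N$ for each $i$, and paste them along the clopen partition $\{g^{-1}(\{n_i\})\}_i$ to obtain a finite-image lift $h$ realising the bound up to $\epsilon$, which gives the required equality and completes the argument.
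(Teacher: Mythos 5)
Your proof is correct and follows essentially the same route as the paper's: the kernel is computed pointwise using that submodules of a module isolated at $0$ carry the restricted (discrete) norm, and the cokernel/strict-epimorphism side is handled by decomposing a finite-image function along a finite clopen partition (Proposition \ref{idempotent generates C(X,R)} (ii)) and pasting value-wise lifts chosen with $\epsilon$-control on the norms. The paper phrases this as preservation of strict monomorphisms/epimorphisms and of exactness for null sequences rather than as preservation of kernels and cokernels of arbitrary morphisms, but the content and the key estimates are the same.
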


\begin{proof}
Let $E$ be a null sequence $M_0 \stackrel{s}{\to} M_1 \stackrel{t}{\to} M_2$ of $\Modiso_{\cC}(R)$. It suffices to show that the following two claims:
\begin{itemize}
\item[(i)] If $s$ is a strict monomorphism and $E$ is exact, then $\rCfin^{\iso}(X,s)$ is a strict monomorphism and $\rCfin^{\iso}(X,E)$ is exact.
\item[(ii)] If $t$ is a strict epimorphism and $E$ is exact, then $\rCfin^{\iso}(X,t)$ is a strict epimorphism and $\rCfin^{\iso}(X,E)$ is exact.
\end{itemize}
The claim (i) follows from the fact that $\ker(\rCfin^{\iso}(X,t))$ is given as
\begin{eqnarray*}
\set{f \in \rCfin(X,M_1)}{t \circ f = 0} = \set{f \in \rCfin(X,M_1)}{\forall x \in X, f(x) \in \ker(t)}
\end{eqnarray*}
equipped with the restriction of $\n{\cdot}_{\rCfin(X,M_1)}$. We show the claim (ii). Since $t$ is a strict epimorphism, there exists a $C_1 \in (0,\infty)$ such that for any $m \in M_2$, there exists an $\tl{m} \in M_1$ such that $t(\tl{m}) = m$ and $\n{\tl{m}}_{M_1} \leq C_1 \n{m}_{M_2}$.

\vspace{0.1in}
Let $f \in \rCfin(X,M_2)$. By Proposition \ref{idempotent generates C(X,R)} (ii), there exists a pair $(\cU,e)$ of a finite subset $\cU \subset \CO(X) \setminus \ens{\emptyset}$ and a map $e \colon \cU \to M_2$ such that $\cU$ is a disjoint cover of $X$ and $\sum_{U \in \cU} 1_{X,R,U} e(U) = f$. Then, we have $\n{f}_{\rCfin(X,M_2)} = \sup_{U \in \cU} \n{e(U)}_{M_2}$. By the choice of $C_1$, there exists a map $\tl{e} \colon \cU \to M_1$ such that for any $U \in \cU$, $t(\tl{e}(U)) = e(U)$ and $\n{\tl{e}(U)}_{M_1} \leq C_1 \n{e(U)}_{M_2}$. Put $\tl{f} \coloneqq \sum_{U \in \cU} 1_{X,R,U} \tl{e}(U) \in \rCfin(X,M_1)$. Then, we have $\rCfin^{\iso}(X,t)(\tl{f}) = t \circ \tl{f} = f$ and $\n{\tl{f}}_{\rCfin(X,M_1)} = \sup_{U \in \cU} \n{\tl{e}(U)}_{M_1} \leq \sup_{U \in \cU} C_1 \n{e(U)}_{M_2} = C_1 \n{f}_{\rCfin(X,M_2)}$. This implies that $\rCfin^{\iso}(X,t)$ is a strict epimorphism.

\vspace{0.1in}
Let $f \in \ker(\rCfin^{\iso}(X,t))$. By $\im(s) = \ker(t)$, there exists a $C_0 \in (0,\infty)$ such that for any $(m,\epsilon) \in \ker(t) \times (0,\infty)$, there exists an $\tl{m} \in M_0$ such that $\n{s(\tl{m}) - m}_{M_1} < \epsilon$ and $\n{\tl{m}}_{M_0} \leq C_0 \n{m}_{M_1}$. By Proposition \ref{idempotent generates C(X,R)} (ii), there exists a pair $(\cU,e)$ of a finite subset $\cU \subset \CO(X) \setminus \ens{\emptyset}$ and a map $e \colon \cU \to M_1$ such that $\cU$ is a disjoint cover of $X$ and $\sum_{U \in \cU} 1_{X,R,U} e(U) = f$. By the choice of $C_0$, there exists a map $\tl{e} \colon \cU \to M_0$ such that for any $U \in \cU$, $\n{s(\tl{e}(U)) - e(U)}_{M_0} < \epsilon$ and $\n{\tl{e}(U)}_{M_0} \leq C_0 \n{e(U)}_{M_1}$. Put $\tl{f} \coloneqq \sum_{U \in \cU} 1_{X,R,U} \tl{e}(U) \in \rCfin(X,M_0)$. Then, we have $\rCfin^{\iso}(X,s)(\tl{f}) = s \circ \tl{f} = f$ and $\n{\tl{f}}_{\rCfin(X,M_0)} = \sup_{U \in \cU} \n{\tl{e}(U)}_{M_0} \leq \sup_{U \in \cU} C_0 \n{e(U)}_{M_2} = C_0 \n{f}_{\rCfin(X,M_1)}$. This implies that $\rCfin^{\iso}(X,E)$ is exact.
\end{proof}

Let $R_0$ be a ring. We denote by $\Mod(R_0)$ the category of $R_0$-modules and $R_0$-linear homomorphisms. When $R_0$ is the underlying ring of $R$, then we put $\Mod_{\Ab}(R) \coloneqq \Mod(R_0)$. We denote by $F^{\Ab}$ the forgetful functor $\Modiso_{\cC}(R) \to \Mod_{\Ab}(R)$. 

\begin{prp}
\label{Modiso is monoidal subcategory}
\begin{itemize}
\item[(i)] For any $(M_0,M_1) \in \Modiso_{\cC}(R)^2$, the natural $R$-linear homomorphism $\varphi \colon M_0 \otimes_R M_1 \to M_0 \wh{\otimes}_R M_1$ is bijective, and $M_0 \wh{\otimes}_R M_1$ is isolated at $0$.
\item[(ii)] The category $\Modiso_{\cC}(R)$ is a symmetric monoidal subcategory of $\Mod_{\cC}(R)$.
\item[(iii)] The functor $F^{\Ab}$ is a symmetric monoidal functor.
\end{itemize}
\end{prp}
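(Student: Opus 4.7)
The plan is to reduce the whole proposition to part (i); assertions (ii) and (iii) then follow formally. The key technical content lies in showing that the algebraic tensor product of two modules isolated at $0$ is already a Banach module, at which point completion becomes trivial.

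For (i), by Proposition \ref{isolated at 0 implies discrete} I pick $\epsilon_i \in (0,\infty)$ with $\set{m \in M_i}{\n{m}_{M_i} < \epsilon_i} = \ens{0}$ for $i \in \ens{0,1}$, and set $\epsilon \coloneqq \epsilon_0 \epsilon_1$. The core observation is that, for any pair $(m_{0,j},m_{1,j}) \in M_0 \times M_1$, the product $\n{m_{0,j}}_{M_0} \n{m_{1,j}}_{M_1}$ lies in the closed subset $\ens{0} \cup [\epsilon,\infty) \subset [0,\infty)$, because a nonzero element of $M_i$ has norm at least $\epsilon_i$. From this I deduce that the tensor seminorm $s(m)$ of any $m \in M_0 \otimes_R M_1$ also lies in $\ens{0} \cup [\epsilon,\infty)$. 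In the case $\cC = \NBAb$ the tensor seminorm is an infimum of maxima, each of which already sits in $\ens{0} \cup [\epsilon,\infty)$, so the infimum does too since that set is closed. In the case $\cC = \BAb$ the tensor seminorm is an infimum of sums of nonnegative terms drawn from $\ens{0} \cup [\epsilon,\infty)$; if such a sum is strictly less than $\epsilon$ then every summand must vanish, and hence the corresponding representation $\sum_j m_{0,j} \otimes m_{1,j}$ is the zero element of $M_0 \otimes_R M_1$, forcing $m = 0$.

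With this dichotomy in hand, the tensor seminorm is a genuine norm on $M_0 \otimes_R M_1$ (since $s(m) < \epsilon$ forces $m = 0$), and the resulting normed module is isolated at $0$. By Proposition \ref{isolated at 0 implies discrete} again, the induced metric is discrete, so every Cauchy sequence is eventually constant and therefore convergent; thus $M_0 \otimes_R M_1$ is already complete. Consequently $\varphi \colon M_0 \otimes_R M_1 \to M_0 \wh{\otimes}_R M_1$ is an isometric $R$-linear isomorphism, and $M_0 \wh{\otimes}_R M_1$ is isolated at $0$.

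Parts (ii) and (iii) are then bookkeeping. For (ii), part (i) shows that $\wh{\otimes}_R$ restricts to a bifunctor $\Modiso_{\cC}(R)^2 \to \Modiso_{\cC}(R)$; the associator, symmetry, and unitor constraints (together with $R$ itself as monoidal unit when $R$ is isolated at $0$) are inherited verbatim from $\Mod_{\cC}(R)$, and the internal hom restricts analogously since bounded homomorphisms between discrete Banach modules form a discrete module. For (iii), part (i) identifies $F^{\Ab}(M_0 \wh{\otimes}_R M_1)$ with $F^{\Ab}(M_0) \otimes_R F^{\Ab}(M_1)$ tautologically, so taking the identity as the comparison isomorphism makes the coherence diagrams commute on the nose. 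I expect the main step to be the boundedness-below argument in (i) for the Archimedean case, which is the only place where the hypothesis of isolation at $0$ on both factors is genuinely used; everything else is formal.
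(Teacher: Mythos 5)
Your proposal is correct and follows essentially the same route as the paper: both reduce (ii) and (iii) to (i), and prove (i) by choosing $\epsilon$ with $\set{m \in M_i}{\n{m}_{M_i} < \epsilon} = \ens{0}$ and observing that any element of $M_0 \otimes_R M_1$ of tensor seminorm less than $\epsilon^2$ must vanish, whence the seminorm is a norm, the space is discrete and complete, and $\varphi$ is bijective. Your added remarks on the two cases $\cC = \BAb$, $\NBAb$ and on the monoidal unit are just slightly more explicit versions of what the paper leaves implicit.
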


\begin{proof}
The assertions (ii) and (iii) immediately follow from the assertion (i). We show the assertion (i).
Since $M_0$ and $M_1$ are isolated at $0$, there exists an $\epsilon \in (0,\infty)$ such that for any $i \in \ens{0,1}$, the equality $\set{m \in M_i}{\n{m}_M < \epsilon} = \ens{0}$ holds. Then, for any $m \in M_0 \otimes_R M_1$, if the tensor seminorm of $m$ is smaller than $\epsilon^2$, then $m = 0$. Therefore, the $R$-linear homomorphism $\varphi$ is injective, and the equality $\set{m \in M_0 \wh{\otimes}_R M_1}{\n{m}_{M_0 \wh{\otimes}_R M_1} < \epsilon^2} = \ens{0}$ holds. By Proposition \ref{isolated at 0 implies discrete}, $\varphi$ is surjective. 
\end{proof}

We compare the quotient by a closed ideal and the base change by the quotient ring for a Banach module isolated at $0$.

\begin{prp}
\label{Modiso is closed under quotient}
Let $M \in \Modiso_{\cC}(R)$. For any closed ideal $I \subset R$, $M \wh{\otimes}_R R/I$ is isolated at $0$, and the natural morphism $R/I \wh{\otimes}_R M \to M/I M$ in $\Mod_{\cC}(R)$ is an isomorphism.
\end{prp}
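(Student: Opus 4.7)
The plan is to exploit the fact that an object of $\Modiso_{\cC}(R)$ is discrete. By Proposition \ref{isolated at 0 implies discrete}, $M$ has the discrete topology, so every subgroup, and in particular the $R$-submodule $IM \subset M$, is closed. Hence the quotient seminorm on $M/IM$ is a genuine complete norm, and $M/IM$, being a quotient of a discrete topological group by a subgroup, is itself discrete. Applying Proposition \ref{isolated at 0 implies discrete} in the opposite direction, $M/IM \in \Modiso_{\cC}(R)$. So once the isomorphism $R/I \wh{\otimes}_R M \cong M/IM$ is established, the claim that $M \wh{\otimes}_R R/I$ is isolated at $0$ is automatic.

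To construct the isomorphism, I would apply $(\cdot) \wh{\otimes}_R M$ to the strictly exact sequence $I \to R \to R/I \to 0$ in $\Mod_{\cC}(R)$ (note $R/I$ is a Banach $R$-module because $I$ is closed). Since $(\cdot) \wh{\otimes}_R M$ is a left adjoint of $\uHom_{\Mod_{\cC}(R)}(M,\cdot)$, it preserves cokernels, so $R/I \wh{\otimes}_R M$ is the cokernel in $\Mod_{\cC}(R)$ of the induced morphism $I \wh{\otimes}_R M \to R \wh{\otimes}_R M$. Post-composing with the unitor isomorphism $R \wh{\otimes}_R M \stackrel{\cong}{\to} M$ of $\Mod_{\cC}(R)$ (an isomorphism even though it is not necessarily isometric, as recalled in \S \ref{Preliminaries}), I obtain a morphism $\psi \colon I \wh{\otimes}_R M \to M$ whose cokernel in $\Mod_{\cC}(R)$ is canonically isomorphic to $R/I \wh{\otimes}_R M$, and the composite isomorphism tracks the natural prescription $\bar{r} \otimes m \mapsto \overline{rm}$.

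The main step, and the main obstacle, is identifying the set-theoretic image of $\psi$ with $IM$ rather than with the a priori possibly larger closure $\overline{IM}$. The inclusion $IM \subset \im(\psi)$ is immediate because $\psi$ sends $i \otimes m$ to $im$. For the reverse inclusion, I would use that $I \otimes_R M$ is dense in $I \wh{\otimes}_R M$: given $x \in I \wh{\otimes}_R M$, pick a Cauchy sequence $(x_n)_{n \in \N}$ in $I \otimes_R M$ converging to $x$; then $\psi(x_n) \to \psi(x)$ in $M$, and by discreteness of $M$ we have $\psi(x_n) = \psi(x)$ for $n \gg 0$, whence $\psi(x) \in IM$. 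Since $IM$ is already closed, the cokernel of $\psi$ in the quasi-Abelian category $\Mod_{\cC}(R)$ is exactly $M/IM$ equipped with the quotient norm, completing the proof.
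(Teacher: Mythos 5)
Your proof is correct and follows essentially the same route as the paper: tensor the strictly coexact sequence $I \to R \to R/I \to 0$ with $M$, identify the image of $I \wh{\otimes}_R M \to M$ with $IM$ via discreteness of $M$, and conclude by passing to cokernels. The only cosmetic difference is that you spell out the eventual-constancy argument for the image and invoke the left-adjoint property where the paper cites strong right exactness of $(\cdot) \wh{\otimes}_R M$ and Propositions \ref{isolated at 0 implies discrete} and \ref{Modiso is quasi-Abelian} (i).
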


\begin{proof}
By Proposition \ref{Modiso is quasi-Abelian} (i), $M/I M$ forms an object of $\Modiso_{\cC}(R)$, and hence the first claim follows from the second claim by Proposition \ref{isolated at 0 implies discrete}. We denote by $i$ the inclusion $I \hookrightarrow R$, and by $E$ the strictly coexact sequence $I \stackrel{i}{\to} R \to R/I \to 0$ of $\Mod_{\cC}(R)$. By the strong right exactness of $(\cdot) \wh{\otimes}_R M$, $E \wh{\otimes}_R M$ is strictly coexact. By Proposition \ref{isolated at 0 implies discrete}, the image of $i \wh{\otimes}_R M$ in $\Mod_{\cC}(R)$ coincides with $I M$. Therefore, the strict coexactness of $E \wh{\otimes}_R M$ implies that the morphism in the assertion is an isomorphism.
\end{proof}

For an $M \in \Mod_{\cC}(R)$, we say that $\n{\cdot}_M$ is {\it submultiplicative over $R$} if the inequality $\n{am}_M \leq \n{a}_R \n{m}_M$ holds for any $(a,m) \in A \times M$. We recall that $\n{\cdot}_R$ is equivalent to the submultiplicative norm over $R$ assigning $\sup_{b \in R \setminus \ens{0}} \frac{\n{ab}_R}{\n{b}_R}$ to each $a \in R$ by \cite{BGR84} Proposition 1.2.1/2, and if $\n{\cdot}_R$ itself is submultiplicative over $R$, then for any $M \in \Mod_{\cC}(R)$, $\n{\cdot}_M$ is equivalent to the submultiplicative norm over $R$ assigning $\sup_{a \in R \setminus \ens{0}} \frac{\n{am}_M}{\n{a}_R}$ to each $m \in M$.

\vspace{0.1in}
A family $(M_i)_{i \in I}$ of objects in a category is said to be {\it essentially finite} if $M_i$ is a zero object for all but finitely many $i \in I$. For any family of objects $(M_i)_{i \in I}$ in $\Mod_{\cC}(R)$ with a uniform bound of the operator norm of the scalar multiplication, we defined in \cite{BM21} p.\ 10 the {it contracting direct sum} $\bigoplus_{i \in I}^{\cC_{\leq 1}} M_i$ and the {it contracting direct product} $\prod_{i \in I}^{\cC_{\leq 1}} M_i$. In particular, they are defined for any family of objects in $\Mod_{\cC}(R)$ with submultiplicative norms over $R$. These constructions are not functorial for bounded homomorphisms but for contracting homomorphisms. The restrictions of these two assignments define functors from the categories of essentially finite discrete diagrams of objects of $\Mod_{\cC}(R)$ and they define naturally isomorphic direct sum functors. Therefore, we will drop the superscript $\cC_{\leq 1}$ and denote the direct sum $\bigoplus_{i \in I}^{\cC_{\leq 1}} M_i$ as $\bigoplus_{i \in I} M_i$ for essentially finite families $(M_i)_{i \in I}$ of objects of $\Mod_{\cC}(R)$. Notice that all additive functors preserve essentially finite direct sums.

\vspace{0.1in}
We recall that a normed set is a set $S$ equipped with a map $\v{\cdot}_S \colon S \to [0,\infty)$, and $\NSet$ denotes the category of normed sets and bounded maps. See \cite{BBK19} \S 1 for more details about this category. For an $S \in \NSet$, we put $S_+ \coloneqq \set{s \in S}{\n{s}_S > 0}$. For example, every subset $S$ of an $M \in \Mod_{\cC}(R)$ is a normed set with respect to the norm and satisfies $S_+ = S \setminus \ens{0}$. An $S \in \NSet$ is said to be {\it isolated at $0$} if there exists an $\epsilon \in (0,\infty)$ such that $\set{s \in S}{\n{s}_S < \epsilon} = S \setminus S_+$.

\begin{exm}
\label{trivially normed set}
\begin{itemize}
\item[(i)] Every subset of an object of $\Modiso_{\cC}(R)$ is a normed set isolated at $0$.
\item[(ii)] Every set $S$ forms a normed set isolated at $0$ with respect to the constant map $1_S \colon S \to [0,\infty)$ with value $1$.
\end{itemize}
\end{exm}

For an $S \in \NSet$, we denote by $\ell^{\cC}(S,R) \in \Mod_{\cC}(R)$ the projective object introduced in \cite{BM21} p.\ 12 (cf.\ \cite{BB16} Lemma 3.27 and \cite{BK17} Lemma A.39), i.e.\ the completion of $R^{\oplus S}$ with respect to the seminorm assigning to each $(r_s)_{s \in S} \in R^{\oplus S}$ the value $\sum_{s \in S} \n{r_s}_R \v{s}_S \in [0,\infty)$ when $\cC = \BAb$ and $\max_{s \in S} \n{r_s}_R \v{s}_S \in [0,\infty)$ when $\cC = \NBAb$. We put $\Chisoba_{\cC}(R) \coloneqq \Chba(\Modiso_{\cC}(R)) \subset \Chba_{\cC}(R)$. 

\begin{prp}
\label{isolated free module}
Let $S \in \NSet$.
\begin{itemize}
\item[(i)] If $S$ and $R$ are isolated at $0$, then the natural $R$-linear homomorphism $R^{\oplus S_+} \to \ell^{\cC}(S,R)$ is bijective, and $\ell^{\cC}(S,R)$ is a projective object of $\Modiso_{\cC}(R)$.
\item[(ii)] Let $A \in \Alg_{\cC}(R)$. The natural morphism $\varphi \colon \ell^{\cC}(S,R) \wh{\otimes}_R A \to \ell^{\cC}(S,A)$ in $\Mod_{\cC}(A)$ is an isomorphism. In addition, if $\n{1}_R = 1$ and $\n{\cdot}_A$ is submultiplicative over $R$, then it is an isometric $A$-linear isomorphism.
\item[(iii)] If $S$ and $R$ are isolated at $0$, then for any $A \in \Algiso_{\cC}(R)$, the natural $A$-linear homomorphism $\ell^{\cC}(S,R) \otimes_R A \to \ell^{\cC}(S,A)$ is bijective.
\end{itemize}
\end{prp}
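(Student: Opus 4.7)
For (i), since $R$ and $S$ are isolated at $0$, fix $\epsilon_R, \epsilon_S \in (0,\infty)$ with $\set{r \in R}{\n{r}_R < \epsilon_R} = \ens{0}$ and $\set{s \in S}{\v{s}_S < \epsilon_S} = S \setminus S_+$. For the natural map $R^{\oplus S_+} \to \ell^{\cC}(S,R)$, I first observe that the seminorm on $R^{\oplus S}$ restricts to an honest norm on $R^{\oplus S_+}$, so the map is injective. For surjectivity, I argue that $R^{\oplus S_+}$ is already complete: any summand $\n{r_s}_R \v{s}_S$ with $s \in S_+$ and $r_s \neq 0$ is at least $\epsilon_R \epsilon_S$, so a Cauchy sequence $(x_n)$ in $R^{\oplus S_+}$ satisfying $\n{x_n - x_m} < \epsilon_R \epsilon_S$ for $n,m$ large must have the property that each $s$-component stabilises, whence the sequence is eventually constant and converges in $R^{\oplus S_+}$. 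This works uniformly for $\cC = \BAb$ and $\cC = \NBAb$ since both the $\ell^1$- and max-seminorms dominate each individual summand. Projectivity in $\Modiso_{\cC}(R)$ then follows from projectivity of $\ell^{\cC}(S,R)$ in $\Mod_{\cC}(R)$ recalled in the preliminaries, together with Proposition \ref{Modiso is quasi-Abelian} (ii), which implies that a strict epimorphism in $\Modiso_{\cC}(R)$ is in particular a strict epimorphism in $\Mod_{\cC}(R)$; hence every lifting problem for $\ell^{\cC}(S,R)$ against a strict epimorphism in $\Modiso_{\cC}(R)$ can be solved in $\Mod_{\cC}(R)$, and the resulting lift automatically lands in $\Modiso_{\cC}(R)$.

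For (ii), I would use the universal property of $\ell^{\cC}(S,R)$ recalled in the preliminaries: for any $M \in \Mod_{\cC}(R)$, $\Hom_{\Mod_{\cC}(R)}(\ell^{\cC}(S,R), M)$ is naturally identified with $\Hom_{\NSet}(S, M)$. For $M \in \Mod_{\cC}(A)$, using the adjunction $(\cdot) \wh{\otimes}_R A \dashv \Phi_*$ between extension and restriction of scalars,
\[
\Hom_{\Mod_{\cC}(A)}(\ell^{\cC}(S,R) \wh{\otimes}_R A, M) \cong \Hom_{\Mod_{\cC}(R)}(\ell^{\cC}(S,R), \Phi_* M) \cong \Hom_{\NSet}(S, M),
\]
which is the same functor represented by $\ell^{\cC}(S,A)$. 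By the Yoneda lemma, $\varphi$ is an isomorphism in $\Mod_{\cC}(A)$. For the isometric strengthening under $\n{1}_R = 1$ and submultiplicativity of $\n{\cdot}_A$ over $R$, I would argue directly: the image of a pure tensor $(r_s)_{s \in S_+} \otimes a$ is $(r_s a)_{s \in S_+}$, so $\varphi$ is contracting in both the $\BAb$ and $\NBAb$ case by the definition of the projective tensor (semi)norm; conversely, any element $(a_s)_{s \in S_+}$ of $\ell^{\cC}(S,A)$ is the image of $\sum_s e_s \otimes a_s$ (where $e_s \in \ell^{\cC}(S,R)$ is the characteristic element at $s$), and comparing norms term by term using $\n{1}_R = 1$ and submultiplicativity shows that this representative attains the projective tensor norm. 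This is the step I expect to be the most delicate, since one has to check the infimum defining the tensor seminorm is already achieved by the diagonal representative and cannot be improved.

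For (iii), I combine (i) applied to $(S,R)$ and to $(S,A)$: both $\ell^{\cC}(S,R)$ and $\ell^{\cC}(S,A)$ are realised as the algebraic direct sums $R^{\oplus S_+}$ and $A^{\oplus S_+}$ respectively, because the hypothesis $A \in \Algiso_{\cC}(R)$ ensures $A$ is isolated at $0$. The natural $A$-linear homomorphism in question then corresponds to the obvious algebraic identification $R^{\oplus S_+} \otimes_R A \to A^{\oplus S_+}$, which is a bijection of $A$-modules. Alternatively one can invoke (ii) together with Proposition \ref{Modiso is monoidal subcategory} (i), which asserts that for isolated objects the algebraic and completed tensor products coincide, to deduce the bijectivity from the isomorphism in (ii).
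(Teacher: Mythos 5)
Your plan follows essentially the same route as the paper in all three parts: the identification of $\ell^{\cC}(S,R)$ with the already-complete $R^{\oplus S_+}$ in (i), the deduction of projectivity in $\Modiso_{\cC}(R)$ from projectivity in $\Mod_{\cC}(R)$ via Proposition \ref{Modiso is quasi-Abelian} (ii), the universal-property/adjunction argument for the bounded isomorphism in (ii), and the derivation of (iii) from (i) and (ii) together with Proposition \ref{Modiso is monoidal subcategory} (i) (your direct identification of both sides with algebraic direct sums is an equally valid shortcut). The only point where you stop short is the one you flag yourself: the lower bound for the isometry in (ii), i.e.\ that no representative of $\sum_{s \in S_+} \delta_s \otimes a_s$ has tensor seminorm below $\sum^{\cC}_{s \in S_+} \n{a_s}_A \v{s}_S$. ``Comparing norms term by term'' is the right idea, but to make it an argument you must apply it to an \emph{arbitrary} representative, not just the diagonal one. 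The paper does this as follows: if $\sum_{i=0}^{n} (r_{i,s})_{s \in S_+} \otimes b_i = \sum_{s \in S_+} \delta_s \otimes a_s$, then for each $s \in S_+$ one has $\sum_{i=0}^{n} r_{i,s} b_i = a_s$, so the submultiplicativity of $\n{\cdot}_A$ over $R$ and the (strong) triangle inequality give $\sum^{\cC,n}_{i=0} \n{r_{i,s}}_R \n{b_i}_A \geq \n{a_s}_A$; multiplying by $\v{s}_S$ and interchanging the sums over $i$ and $s$ then yields $\sum^{\cC,n}_{i=0} \n{(r_{i,s})_{s \in S_+}}_{\ell^{\cC}(S,R)} \n{b_i}_A \geq \sum^{\cC}_{s \in S_+} \n{a_s}_A \v{s}_S$, which is exactly the claim that the infimum defining the tensor seminorm cannot beat the diagonal representative. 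With that inequality supplied, your proposal is complete and matches the paper's proof.
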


\begin{proof}
The assertion (iii) follows from the assertions (i) and (ii), and by Proposition \ref{Modiso is monoidal subcategory} (i). We show the assertion (i).
For any $(r_s)_{s \in S} \in R^{\oplus S}$, the seminorm of $(r_s)_{s \in S}$ is given as $\sum_{s \in S} \n{r_s}_R \v{s}_S = \sum_{s \in S_+} \n{r_s}_R \v{s}_S$, and hence the completion $\ell^{\cC}(S,R)$ of $R^{\oplus S}$ is naturally identified with the completion of $R^{\oplus S_+}$ equipped with the norm given by the restriction of the seminorm. By $\set{r \in R^{\oplus S_+}}{\n{r}_{R^{\oplus S_+}} < \epsilon^2} = \ens{0}$, $R^{\oplus S_+}$ is complete and isolated at $0$. This implies that the morphism in the assertion (i) is an isomorphism in $\Mod_{\cC}(R)$, and hence $\ell^{\cC}(S,R)$ is isolated at $0$.
The bijectivity of the natural $R$-linear homomorphism follows from Proposition \ref{isolated at 0 implies discrete}. The projectivity of $\ell^{\cC}(S,R)$ in $\Modiso_{\cC}(R)$ follows from the projectivity in $\Mod_{\cC}(R)$, because the inclusion $\Modiso_{\cC}(R) \hookrightarrow \Mod_{\cC}(R)$ preserves the strict exactness of a null sequence by Proposition \ref{Modiso is quasi-Abelian} (ii).

\vspace{0.1in}
The morphism $\varphi$ in the assertion (ii) is a bounded isomorphism by the universality of $\wh{\otimes}_R$ for bounded $R$-bilinear homomorphisms (cf.\ \cite{BM21} p.\ 8) and the adjoint property of $\ell^{\cC}(\cdot,R)$ for bounded maps (cf.\ \cite{BM21} p.\ 12). Therefore, it suffices to show that $\varphi$ is an isometry under the assumption that $\n{1}_R = 1$ and $\n{\cdot}_A$ is submultiplicative over $R$. For this purpose, it suffices to show that for any $(a_s)_{s \in S_+} \in A^{\oplus S_+}$, the tensor seminorm $C$ of an element $\sum_{s \in S_+} \delta_s \otimes a_s \in R^{\oplus S_+} \otimes_R A$ with respect to the restriction of $\n{\cdot}_{\ell^{\cC}(S,R)}$ to $R^{\oplus S_+}$ coincides with $\sum^{\cC}_{s \in S_+} \n{a_s}_A \v{s}_S$, where $\delta_s \in R^{\oplus S_+}$ denotes the image of $s \in S_+$ by the canonical embedding $S_+ \hookrightarrow R^{\oplus S_+}$, and $\sum^{\cC}$ denotes $\sum$ when $\cC = \BAb$ and $\sup$ when $\cC = \NBAb$. We have $C \leq \sum^{\cC}_{s \in S_+} \n{\delta_s}_{\ell^{\cC}(S,R)} \n{a_s}_A = \sum^{\cC}_{s \in S_+} \n{1}_R \v{s}_S \n{a_s}_A = \sum^{\cC}_{s \in S_+} \n{a_s}_A \v{s}_S$.

\vspace{0.1in}
Let $((r_{i,s})_{s \in S_+},b_i)_{i=0}^{n} \in (R^{\oplus S_+},A)^{n+1}$ with $n \in \N$ and $\sum_{i=0}^{n} (r_{i,s})_{s \in S_+} \otimes b_i = \sum_{s \in S_+} \delta_s \otimes a_s$. For any $s \in S_+$, we have $\sum_{i=0}^{n} r_{i,s} b_i = a_s$, and hence $\sum^{\cC,n}_{i=0} \n{r_{i,s}}_R \n{b_i}_A \geq \sum^{\cC,n}_{i=0} \n{r_{i,s} b_i}_A \geq \n{a_s}_A$ by the submultiplicativity of $\n{\cdot}_A$ over $R$ and by the triangle inequality when $\cC = \BAb$ and the strong triangle inequality when $\cC = \NBAb$. We obtain
\begin{eqnarray*}
& & \sum^{\cC,n}_{i=0} \n{(r_{i,s})_{s \in S_+}}_{\ell^{\cC}(S,R)} \n{b_i}_A \geq \sum^{\cC,n}_{i=0} \sum^{\cC}_{s \in S_+} \n{r_{i,s}}_R \v{s}_S \n{b_i}_A \\
& = & \sum^{\cC}_{s \in S_+} \left( \sum^{\cC,n}_{i=0} \n{r_{i,s}}_R \n{b_i}_A \right) \v{s}_S \geq \sum^{\cC}_{s \in S_+} \n{a_s}_A \v{s}_S.
\end{eqnarray*}
This implies $C \geq \sum^{\cC}_{s \in S_+} \n{a_s}_A \v{s}_S$.
\end{proof}

By Proposition \ref{isolated free module} (i), we obtain the following.

\begin{crl}
\label{isolated projective module}
\begin{itemize}
\item[(i)] If $R$ is isolated at $0$, then $\Modiso_{\cC}(R)$ has enough projective objects.
\item[(ii)] If $R$ is isolated at $0$, then $\Chisoba_{\cC}(R)$ is stable under the functor $\cP_{\cC}^R$, and the restriction of $\cP_{\cC}^R$ gives a projective resolution functor on $\Chisoba_{\cC}(R)$.
\end{itemize}
\end{crl}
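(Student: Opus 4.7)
The plan is to derive both assertions from Proposition \ref{isolated free module} (i) together with Proposition \ref{Modiso is quasi-Abelian} (ii), which ensures that the inclusion $F^\iso$ preserves and reflects strict exactness.

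For assertion (i), I would proceed as follows. Given $M \in \Modiso_\cC(R)$, regard its underlying set as a normed set with respect to $\n{\cdot}_M$. By Example \ref{trivially normed set} (i), this normed set is isolated at $0$ (since $M$ itself is), and $R$ is isolated at $0$ by assumption, so Proposition \ref{isolated free module} (i) yields that $\ell^\cC(M,R)$ lies in $\Modiso_\cC(R)$ and is projective there. The evaluation map $\ell^\cC(M,R) \to M$, sending $\delta_m \mapsto m$, is bounded and $R$-linear; its surjectivity with $\n{\cdot}_M$-bounded preimages shows that it is a strict epimorphism in $\Mod_\cC(R)$, and hence in $\Modiso_\cC(R)$ by Proposition \ref{Modiso is quasi-Abelian} (ii). Therefore every object of $\Modiso_\cC(R)$ admits a strict epimorphism from a projective object.

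For assertion (ii), I would use that the projective resolution functor $\cP_\cC^R$ is constructed, as in \cite{BM21} \S 1, by termwise application of the free Banach module functor $\ell^\cC(-,R)$ applied to the underlying normed set of each term (possibly iterated via a bar-type construction to produce the boundary maps). If $M \in \Chisoba_\cC(R)$, then each term $M_n$ is isolated at $0$, so its underlying normed set is isolated at $0$ by Example \ref{trivially normed set} (i), and hence each term of $\cP_\cC^R(M)$ is of the form $\ell^\cC(S_n, R)$ for a normed set $S_n$ isolated at $0$. By Proposition \ref{isolated free module} (i), each such term lies in $\Modiso_\cC(R)$ and is projective there; consequently $\cP_\cC^R(M) \in \Chisoba_\cC(R)$, and it is a termwise-projective resolution of $M$ in this subcategory because the quasi-isomorphism property and strict exactness of the augmented complex are preserved and reflected by $F^\iso$ (Proposition \ref{Modiso is quasi-Abelian} (ii)).

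The main obstacle is that the explicit construction of $\cP_\cC^R$ is only recalled by reference to \cite{BM21}; the argument above is only convincing once one verifies that the construction at each degree uses only the $\ell^\cC$-functor applied to normed sets arising from the underlying sets (equipped with the norms) of previously produced terms. Once this is in hand, the isolated-at-$0$ property propagates automatically through the construction, and Proposition \ref{Modiso is quasi-Abelian} (ii) ensures that no new verification of projectivity or of the resolution property is needed inside $\Modiso_\cC(R)$.
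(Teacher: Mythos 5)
Your proposal is correct and matches the paper's intent exactly: the paper offers no argument beyond the citation of Proposition \ref{isolated free module} (i), and your elaboration --- the counit $\ell^{\cC}(M,R) \to M$ on the underlying normed set for (i), and tracking the isolated-at-$0$ property through the termwise $\ell^{\cC}$-construction of $\cP_{\cC}^R$ for (ii) --- is the implicit reasoning. The obstacle you flag at the end is closed by Proposition \ref{Modiso is quasi-Abelian} (i), since the only normed sets fed into $\ell^{\cC}(\cdot,R)$ during the construction are subsets of modules already known to be isolated at $0$.
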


We denote by $\Derisoba_{\cC}(R)$ the derived category of $\Chisoba_{\cC}(R)$, i.e.\ the localisation of the homotopy category of $\Chisoba_{\cC}(R)$ by the multiplicative system of morphisms represented by quasi-isomorphisms. By the universality of localisation, the strongly exact inclusion $\Chisoba_{\cC}(R) \hookrightarrow \Chba_{\cC}(R)$ induces a functor $F^{\iso} \colon \Derisoba_{\cC}(R) \to \Derba_{\cC}(R)$.

\vspace{0.1in}
Suppose that $R$ is isolated at $0$. Then, for any $(M_0,M_1) \in \Chisoba_{\cC}(R)^2$, the total complex $\Tot(\cP_{\cC}^R(M_0) \wh{\otimes}_R \cP_{\cC}^R(M_1))$ of $M_0 \wh{\otimes}_R M_1$ with respect to the direct sum in $\Modiso_{\cC}(R)$ makes sense as an object of $\Chisoba_{\cC}(R)$, because the direct sums are essentially finite and we can apply Proposition \ref{Modiso is monoidal subcategory} (i), Proposition
\ref{Modiso is quasi-Abelian} (iii), and Proposition \ref{isolated free module} (i). The correspondence $(M_0,M_1) \mapsto \Tot(M_0 \wh{\otimes}_R^{\bL} M_1)$ gives a functor $\wh{\otimes}_R^{\iso \bL} \colon \Derisoba_{\cC}(R)^2 \to \Derisoba_{\cC}(R)$, which is an isolated analogue of $\wh{\otimes}_R^{\bL}$, by Corollary \ref{isolated projective module} (ii).

\begin{prp}
\label{isolated derived tensor}
If $R$ is isolated at $0$, then the tuple $(\Derisoba_{\cC}(R),\wh{\otimes}_R^{\iso \bL},R)$ naturally forms a symmetric monoidal category, and $F^{\iso} \colon \Derisoba_{\cC}(R) \to \Derba_{\cC}(R)$ is a symmetric monoidal fully faithful functor.
\end{prp}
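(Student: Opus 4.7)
The plan is to transfer the symmetric monoidal structure from $\Derba_{\cC}(R)$ to $\Derisoba_{\cC}(R)$ via the functor $F^{\iso}$, after first establishing that $F^{\iso}$ is fully faithful. The essential point is that all the categorical operations involved in defining the monoidal structure---termwise tensor products, finite direct sums (arising in the totalisation of a bicomplex), and projective resolutions---preserve $\Chisoba_{\cC}(R)$, so the construction on $\Derba_{\cC}(R)$ simply restricts.

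First, I would verify that $\wh{\otimes}_R^{\iso \bL}$ is genuinely well-defined. By Proposition \ref{Modiso is monoidal subcategory} (i) the tensor product $\wh{\otimes}_R$ preserves being isolated at $0$, by Proposition \ref{Modiso is quasi-Abelian} (iii) finite direct sums do as well, and by Corollary \ref{isolated projective module} (ii) the resolution functor $\cP_{\cC}^R$ restricts to $\Chisoba_{\cC}(R)$. Since the bicomplex $\cP_{\cC}^R(M_0) \wh{\otimes}_R \cP_{\cC}^R(M_1)$ is concentrated in a lower-right quadrant for $M_0, M_1 \in \Chisoba_{\cC}(R)$, the direct sums in its totalisation are essentially finite in each degree, so the totalisation lies in $\Chisoba_{\cC}(R)$.

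Next, I would prove that $F^{\iso}$ is fully faithful. For $M_0, M_1 \in \Chisoba_{\cC}(R)$, the standard property of the projective resolution functor (as in \cite{BM21} \S 1.2) gives
\[
\Hom_{\Derba_{\cC}(R)}(F^{\iso}(M_0), F^{\iso}(M_1)) = \Hom_{\r{K}(\Chba_{\cC}(R))}(\cP_{\cC}^R(M_0), M_1),
\]
and by Corollary \ref{isolated projective module} (ii) the complex $\cP_{\cC}^R(M_0)$ lies in $\Chisoba_{\cC}(R)$. Because $\Modiso_{\cC}(R)$ is a full replete subcategory of $\Mod_{\cC}(R)$, the morphisms and chain homotopies between two objects of $\Chisoba_{\cC}(R)$ are the same whether computed in $\Chisoba_{\cC}(R)$ or in $\Chba_{\cC}(R)$, so the right-hand side equals $\Hom_{\r{K}(\Chisoba_{\cC}(R))}(\cP_{\cC}^R(M_0), M_1) = \Hom_{\Derisoba_{\cC}(R)}(M_0, M_1)$.

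Finally, I would define the symmetric monoidal structure on $\Derisoba_{\cC}(R)$ by transport along $F^{\iso}$: the associator, unitor, and braiding isomorphisms in $\Derba_{\cC}(R)$ have, by Step 1, representatives that stay inside $\Chisoba_{\cC}(R)$, so by full faithfulness they descend uniquely to natural isomorphisms in $\Derisoba_{\cC}(R)$. The coherence diagrams (pentagon, triangle, hexagon) hold in $\Derba_{\cC}(R)$ and thus, by full faithfulness of $F^{\iso}$, also hold in $\Derisoba_{\cC}(R)$. By construction $F^{\iso}$ is strict (in particular symmetric) monoidal. The main obstacle is essentially bookkeeping in Step 1 together with a clean invocation of the projective-resolution property of $\cP_{\cC}^R$ in the quasi-abelian setting; once these are in place, Steps 2 and 3 are formal consequences of the fact that $\Modiso_{\cC}(R) \hookrightarrow \Mod_{\cC}(R)$ is a full subcategory stable under all relevant operations.
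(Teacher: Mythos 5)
Your proposal is correct, and Steps 1 and 3 (well-definedness of $\wh{\otimes}_R^{\iso \bL}$ via Propositions \ref{Modiso is monoidal subcategory} (i), \ref{Modiso is quasi-Abelian} (iii) and Corollary \ref{isolated projective module} (ii), and the transport of the coherence data along the common chain-level representatives) match the paper's argument, which is stated more tersely as following from ``the symmetry and the associativity of the total complex.'' Where you genuinely diverge is the proof of full faithfulness. The paper invokes the dual of B\"uhler's Corollary 10.22 (ii): it reduces full faithfulness of $F^{\iso}$ on derived categories to the statement that every strict short exact sequence $0 \to M \to N \to L \to 0$ in $\Mod_{\cC}(R)$ with $L \in \Modiso_{\cC}(R)$ receives a map from a strict short exact sequence lying entirely in $\Modiso_{\cC}(R)$, which it produces by covering $L$ with a projective object of $\Modiso_{\cC}(R)$ and using Proposition \ref{Modiso is quasi-Abelian} (i) to see that the kernel stays isolated at $0$. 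You instead compute Hom sets directly: $\Hom_{\Derba_{\cC}(R)}(M_0,M_1) \cong \Hom_{\r{K}(\Chba_{\cC}(R))}(\cP_{\cC}^R(M_0),M_1)$, observe that $\cP_{\cC}^R(M_0)$ lies in $\Chisoba_{\cC}(R)$ and that fullness of $\Modiso_{\cC}(R) \hookrightarrow \Mod_{\cC}(R)$ identifies chain maps and homotopies, and then apply the same projective-resolution formula inside $\Derisoba_{\cC}(R)$. For this last step you implicitly need that the terms of $\cP_{\cC}^R(M_0)$ are projective \emph{in} $\Modiso_{\cC}(R)$ (which is Proposition \ref{isolated free module} (i)) and that $F^{\iso}$ reflects quasi-isomorphisms (which follows from Proposition \ref{Modiso is quasi-Abelian} (ii)); with those citations added your argument is complete. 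Your route is more computational and self-contained at the level of Hom sets, while the paper's route via the B\"uhler criterion avoids re-deriving the identification of derived Homs with homotopy classes of maps out of projective resolutions in the quasi-abelian setting; both are valid.
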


\begin{proof}
The first assertion follows from the symmetry and the associativity of the total complex, and the fact that $F^{\iso}$ is a symmetric monoidal functor immediately follows from the construction of $\wh{\otimes}_R^{\iso \bL}$ using the same representative as a chain complex. It remains to show that $F^{\iso}$ is fully faithful. Using the dual of \cite{Buh10} Corollary 10.22 (ii), it is enough to check that whenever we have a strictly exact sequence
\begin{eqnarray*}
0 \to M \to N \to L \to 0
\end{eqnarray*}
of $\Mod_{\cC}(R)$ with $L \in \Modiso_{\cC}(R)$, then there there exists a commutative diagram 
\begin{eqnarray*}
\xymatrix{
0 \ar[r] & M' \ar[r] \ar[d] & N' \ar[r] \ar[d] & L \ar[r] \ar@{=}[d] & 0 \\
0 \ar[r] & M \ar[r] & N \ar[r] & L \ar[r] & 0 \\
}
\end{eqnarray*}
in $\Mod_{\cC}(R)$ such that $(M',N') \in \Mod_\cC^{\iso}(R)^2$, the top row is strictly exact, and the bottom row is the given strict exact sequence. 
By Corollary \ref{isolated projective module} (ii), there exists a pair $(\cP_L,\pi_L)$ of a projective object $\cP_L$ of $\Mod_{\cC}(R)$ that is isolated at $0$ and a strict epimorphism $\pi_L \colon \cP_L \to L$. By the lifting property of projective objects, we have a commutative diagram of strictly exact sequences
\begin{eqnarray*}
\xymatrix{
0 \ar[r] & \ker(\pi_L) \ar[r] \ar[d] & \cP_L \ar[r]^{\pi_L} \ar[d] & L \ar[r] \ar@{=}[d] & 0 \\
0 \ar[r] & M \ar[r] & N \ar[r] & L \ar[r] & 0 \\
}
\end{eqnarray*}
and by Proposition \ref{Modiso is quasi-Abelian} (i) we have that $\ker(\pi_L)$ is isolated at $0$, concluding the proof.
\end{proof}

\begin{rmk}
Although we have not studied the question in depth, we think that it is probably false that the canonical functors $\Der^{\iso}_{\cC}(R) \to \Der_{\cC}(R)$ and $\Der^{\iso +}_{\cC}(R) \to \Der^+_{\cC}(R)$ are fully faithful.
\end{rmk}

Next, we study the relations between $\rCfin^{\iso}$ and $\wh{\otimes}_R$. For any $(M_0,M_1) \in \Modiso_{\cC}(R)^2$, we denote by $A^{\cD_3}_{M_0,M_1}$ the natural morphism $\rCfin(X,M_0) \wh{\otimes}_R M_1 \to \rCfin(X,M_0 \wh{\otimes}_R M_1)$ in $\Modiso_{\cC}(R)$.

\begin{thm}
\label{absorbing law}
Let $(M_0,M_1) \in \Modiso_{\cC}(R)^2$. Then, the following hold:
\begin{itemize}
\item[(i)] The natural $R$-linear homomorphism $\varphi \colon \rCfin(X,M_0) \otimes_R M_1 \to \rCfin(X,M_0 \wh{\otimes}_R M_1)$ is bijective.
\item[(ii)] If $\cC = \NBAb$, then $A^{\cD_3}_{M_0,M_1}$ is an isometric isomorphism.
\item[(iii)] If $R$ is isolated at $0$ and there exists a finite subset $S \subset M_1$ such that the natural morphism $\ell^{\cC}(S,R) \to M_1$ in $\Modiso_{\cC}(R)$ is a strict epimorphism, then $A^{\cD_3}_{M_0,M_1}$ is an isomorphism.
\item[(iv)] If $\cC = \BAb$, $X$ is $\Z$ equipped with the discrete topology, $R \neq \ens{0}$, and $R$ is isolated at $0$, then $A^{\cD_3}_{R,\ell^{\cC}((\Z,1_{\Z}),R)}$ is not an isomorphism (cf.\ Example \ref{trivially normed set} (ii)).
\end{itemize}
\end{thm}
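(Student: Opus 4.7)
I treat the four parts separately. Part (i) is algebraic bookkeeping: by Proposition \ref{Modiso is monoidal subcategory}(i), $M_0 \wh{\otimes}_R M_1 = M_0 \otimes_R M_1$ algebraically, and Proposition \ref{idempotent generates C(X,R)} lets me write $\rCfin(X, N) = \varinjlim_{\cU} N^{\cU}$ as a filtered colimit over finite disjoint clopen partitions $\cU$ of $X$. Since algebraic tensor products commute with filtered colimits and with finite direct sums, both sides of $\varphi$ identify with $\varinjlim_{\cU} (M_0 \otimes_R M_1)^{\cU}$, so $\varphi$ is bijective.

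For Part (ii), using the identification from (i), I reduce to comparing the non-Archimedean projective tensor norm on $\rCfin(X, M_0) \wh{\otimes}_R M_1$ with the supremum norm on $\rCfin(X, M_0 \wh{\otimes}_R M_1)$. Boundedness of $A^{\cD_3}_{M_0, M_1}$ on simple tensors gives $\n{A^{\cD_3}_{M_0, M_1}(\xi)}_{\sup} \leq \n{\xi}_{\mathrm{proj}}$. For the reverse, given $g = \sum_{U \in \cU} 1_U z_U$ with $\n{g}_{\sup} = \max_U \n{z_U}$, I choose near-optimal representations $z_U = \sum_j m_{0,U,j} \otimes m_{1,U,j}$ and assemble $\xi = \sum_{U,j} (1_U m_{0,U,j}) \otimes m_{1,U,j}$; the crucial point is that in the non-Archimedean projective tensor seminorm, the estimate is a single max over all $(U,j)$, matching $\n{g}_{\sup}$ up to $\epsilon$. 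The Archimedean version of this step would instead produce a sum over $U$, which is precisely what fails in (iv).

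Part (iii) bootstraps from the free case to general $M_1$ via the strict epimorphism $\pi \colon \ell^{\cC}(S, R) \twoheadrightarrow M_1$. When $M_1 = \ell^{\cC}(S, R)$ with $S$ finite, Proposition \ref{isolated free module}(i) identifies $\ell^{\cC}(S, R)$ with $R^{\oplus S_+}$, and finite direct sums commute with $\wh{\otimes}_R$ and $\rCfin(X, \cdot)$, so $A^{\cD_3}_{M_0, \ell^{\cC}(S, R)}$ becomes an identification of finite direct sums of $\rCfin(X, M_0)$ and is an isomorphism. For general $M_1$, I form the square
\[
\xymatrix{
\rCfin(X, M_0) \wh{\otimes}_R \ell^{\cC}(S, R) \ar[r]^-{\cong} \ar[d]_-{\alpha} & \rCfin(X, M_0 \wh{\otimes}_R \ell^{\cC}(S, R)) \ar[d]^-{\beta} \\
\rCfin(X, M_0) \wh{\otimes}_R M_1 \ar[r]^-{A^{\cD_3}_{M_0, M_1}} & \rCfin(X, M_0 \wh{\otimes}_R M_1) \\
}
\]
in which $\alpha$ is a strict epimorphism by strong right exactness of $\wh{\otimes}_R$, and $\beta$ is a strict epimorphism by the strong right exactness of $M_0 \wh{\otimes}_R (\cdot)$ combined with Proposition \ref{Cfin is strongly exact}. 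Given $g$ in the lower right, I lift via $\beta$ with norm control, transport back through the top isomorphism, and push down via $\alpha$ to obtain $\xi$ with $A^{\cD_3}_{M_0, M_1}(\xi) = g$ and $\n{\xi} \leq C \n{g}$ for a uniform $C$. This makes $A^{\cD_3}_{M_0, M_1}$ a strict epimorphism, which combined with injectivity from (i) forces it to be an isomorphism. The main obstacle here is that the Open Mapping Theorem fails in $\Modiso_{\cC}(R)$, so one must explicitly produce norm-bounded preimages via the diagram rather than invoke bijectivity alone.

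For Part (iv), I build an explicit counterexample. Since $R$ is isolated at $0$, $M_1 = \ell^{\BAb}((\Z, 1_\Z), R)$ coincides algebraically with $R^{\oplus \Z}$ endowed with an $\ell^1$-type norm. Define $g_k \in \rCfin(\Z, M_1)$ by $g_k(n) = \delta_n$ for $0 \leq n < k$ and $g_k(n) = 0$ otherwise; then $\n{g_k}_{\sup} = \n{\delta_0}_{M_1}$ is independent of $k$, and by (i) its unique algebraic preimage is $\xi_k = \sum_{n=0}^{k-1} 1_{\{n\}} \otimes \delta_n$. To lower-bound $\n{\xi_k}_{\mathrm{proj}}$, I introduce the $R$-bilinear pairing $\phi \colon \rCfin(\Z, R) \times M_1 \to R$ defined by $\phi(f, (r_n)_{n \in \Z}) = \sum_{n \in \Z} f(n) r_n$, a finite sum because $(r_n) \in R^{\oplus \Z}$ by isolation of $R$. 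Invoking the (equivalent) submultiplicativity of $\n{\cdot}_R$, $\phi$ is bounded in the Archimedean sense with norm independent of $k$ and extends to a bounded functional on $\rCfin(\Z, R) \wh{\otimes}_R M_1$. Evaluating gives $\phi(\xi_k) = k$, so $\n{\xi_k}_{\mathrm{proj}} \to \infty$ while $\n{g_k}_{\sup}$ remains bounded, so $A^{\cD_3}_{R, M_1}$ cannot have a bounded inverse.
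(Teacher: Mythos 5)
Parts (i)--(iii) of your proposal are correct and are essentially the paper's own arguments in light disguise: your filtered-colimit phrasing of (i) unpacks to the paper's explicit computation with a common clopen refinement; your (ii) is exactly the paper's pair of contraction estimates, with the single max over all $(U,j)$ as the decisive point; and your (iii) replaces the paper's termwise-bijective morphism of strictly coexact sequences by an equivalent diagram chase with explicit norm control, which correctly avoids appealing to the Open Mapping Theorem and, combined with injectivity from (i), does yield an isomorphism.

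Part (iv), however, has a genuine gap. Your lower bound on $\n{\xi_k}$ comes from the bounded pairing $\phi$, and it yields only $\n{\xi_k}_{\mathrm{proj}} \geq \n{\phi}^{-1} \n{k \cdot 1_R}_R$. This tends to infinity only when the image of $\Z$ in $R$ has unbounded norm, as for $R = \Z_\infty$. But the statement is asserted for every non-zero Banach ring $R$ isolated at $0$, with $\cC = \BAb$ only dictating that the tensor seminorm and $\ell^{\cC}$ are of $\ell^1$ type; it therefore covers, say, $R = \Z_{\triv}$ or $R = (\F_p)_{\triv}$, for which $\n{k \cdot 1_R}_R \leq 1$ for all $k$ (and $k \cdot 1_R = 0$ when $p$ divides $k$ in the second case), so your estimate gives no growth at all. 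The paper's mechanism is different and purely combinatorial: evaluating at the coordinates $0, \dots, k-1$ sends $\xi_k$ to the identity matrix in $R^{k} \otimes_R R^{k}$, and over any non-zero commutative ring the identity matrix cannot be written as a sum of fewer than $k$ elementary tensors (reduce modulo a maximal ideal); hence every representation of $\xi_k$ has at least $k$ non-zero elementary summands, each contributing at least $\epsilon^2$ to the $\ell^1$-type tensor seminorm by isolation at $0$, so that $\n{\xi_k}_{\mathrm{proj}} \geq k \epsilon^2$ while $\n{g_k}_{\sup} = \n{1}_R$ stays fixed. You should replace the pairing estimate by this counting argument; as written, your proof of (iv) establishes the claim only for rings such as $\Z_\infty$, which is strictly weaker than the stated result.
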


\begin{proof}
We show the assertion (i). The surjectivity of the $R$-linear homomorphism $\varphi$ in the assertion (i) follows from Proposition \ref{idempotent generates C(X,R)} (i) and Proposition \ref{Modiso is monoidal subcategory} (i). We show the injectivity of $\varphi$. Let $f \in \rCfin(X,M_0) \otimes_R M_1$ with $\varphi(f) = 0$. Take an $(f_i,m_i)_{i=0}^{n} \in (\rCfin(X,M_0) \times M_1)^{n+1}$ with $n \in \N$ and $f = \sum_{i=0}^{n} f_i \otimes m_i$. By Proposition \ref{idempotent generates C(X,R)} (ii), there exists a finite subset $\cU \subset \CO(X) \setminus \ens{\emptyset}$ such that $\cU$ is a disjoint cover of $X$ and for any $(i,U) \in \N \times \cU$ with $i \leq n$, $f_i |_U$ is constant. For each $(i,U) \in \N \times \cU$ with $i \leq n$, we denote by $m_{i,U} \in M_0$ the value of $f_i |_U$, which makes sense by $U \neq \emptyset$. Since $\varphi(f) |_U$ is a constant map with value $\sum_{i=0}^{n} m_{i,U} \otimes m_i$, we have $\sum_{i=0}^{n} m_{i,U} \otimes m_i = 0$ for any $U \in \cU$ by $\varphi(f) = 0$. For each $U \in \cU$, we denote by $i_U$ the $R$-linear homomorphism $M_0 \to \rCfin(X,M_0)$ assigning $1_{X,R,U} m$ to each $m \in M_0$. We obtain
\begin{eqnarray*}
& & f = \sum_{i=0}^{n} f_i \otimes m_i = \sum_{i=0}^{n} \sum_{U \in \cU} i_U(m_{i,U}) \otimes m_i = \sum_{U \in \cU} \sum_{i=0}^{n} i_U(m_{i,U}) \otimes m_i \\
& = & \sum_{U \in \cU} (i_U \otimes_R \id_{M_1}) \left( \sum_{i=0}^{n} m_{i,U} \otimes m_i \right) = \sum_{U \in \cU} (i_U \otimes_R \id_{M_1})(0) = 0.
\end{eqnarray*}
We show the assertion (ii). If $X = \emptyset$, then both objects of $\Modiso_{\cC}(R)$ are zero objects, which have unique norms. Therefore, we may assume $X \neq \emptyset$. First, we show that $\n{\varphi(f)}_{\rCfin(X,M_0 \wh{\otimes}_R M_1)} < \n{f}_{\rCfin(X,M_0) \wh{\otimes}_R M_1} + \epsilon$ for any $(\epsilon,f) \in (0,\infty) \times (\rCfin(X,M_0) \wh{\otimes}_R M_1)$. By the definition of the tensor seminorm for $\cC = \NBAb$, there exists an $(f_i,m_i)_{i=0}^{n} \in (\rCfin(X,M_0) \times M_1)^{n+1}$ with $n \in \N$ and $f = \sum_{i=0}^{n} f_i \otimes m_i$ such that $\max_{i=0}^{n} \n{f_i}_{\rCfin(X,M_0)} \n{m_i}_{M_1} < \n{f}_{\rCfin(X,M_0) \wh{\otimes}_R M_1} + \epsilon$. Then, we have
\begin{eqnarray*}
& & \n{\varphi(f)(x)}_{M_0 \wh{\otimes}_R M_1} = \n{\sum_{i=0}^{n} f_i(x) \otimes m_i}_{M_0 \wh{\otimes}_R M_1} \leq \max{}_{i=0}^{n} \n{f_i(x)}_{M_0} \n{m_i}_{M_1} \\
& \leq & \max{}_{i=0}^{n} \n{f_i}_{\rCfin(X,M_0)} \n{m_i}_{M_1} < \n{f}_{\rCfin(X,M_0) \wh{\otimes}_R M_1} + \epsilon
\end{eqnarray*}
for any $x \in X$, and hence $\n{\varphi(f)}_{\rCfin(X,M_0 \wh{\otimes}_R M_1)} < \n{f}_{\rCfin(X,M_0) \wh{\otimes}_R M_1} + \epsilon$. This implies that $\varphi$ is a contraction.

\vspace{0.1in}
Next, we show that $\n{\varphi^{-1}(f)}_{\rCfin(X,M_0) \wh{\otimes}_R M_1} < \n{f}_{\rCfin(X,M_0 \wh{\otimes}_R M_1)} + \epsilon$ for any $(\epsilon,f) \in (0,\infty) \times \rCfin(X,M_0 \wh{\otimes}_R M_1)$. By Proposition \ref{idempotent generates C(X,R)} (ii), there exists a pair $(\cU,e)$ of a finite subset $\cU \subset \CO(X) \setminus \ens{\emptyset}$ and a map $e \colon \cU \to M_0 \wh{\otimes}_R M_1$ such that $\cU$ is a disjoint cover of $X$ and $\sum_{U \in \cU} 1_{X,R,U} e(U) = f$. Then, we have $\n{f}_{\rCfin(X,M_0 \wh{\otimes}_R M_1)} = \sup_{U \in \cU} \n{e(U)}_{M_0 \wh{\otimes}_R M_1}$.

\vspace{0.1in}
For any $U \in \cU$, there exists an $(m_{U,0,i},m_{U,1,i})_{i=0}^{n_U} \in (M_0 \times M_1)^{n_U + 1}$ with $n_U \in \N$ such that $\sum_{i=0}^{n_U} m_{U,0,i} \otimes m_{U,1,i} = e(U)$ by Proposition \ref{Modiso is monoidal subcategory} (i). By the definition of the tensor seminorm for $\cC = \NBAb$, we choose $n_U$ and $(m_{U,0,i},m_{U,1,i})_{i=0}^{n_U}$ so that the inequality $\max_{i=0}^{n_U} \n{m_{U,0,i}}_{M_0} \n{m_{U,1,i}}_{M_1} < \n{e(U)}_{M_0 \wh{\otimes}_R M_1} + \epsilon$ holds. Then, we have
\begin{eqnarray*}
& & \n{\varphi^{-1}(f)}_{\rCfin(X,M_0) \wh{\otimes}_R M_1} = \n{\varphi^{-1} \left( \sum_{U \in \cU} 1_{X,R,U} e(U) \right) }_{\rCfin(X,M_0) \wh{\otimes}_R M_1} \\
& = & \n{\varphi^{-1} \left( \sum_{U \in \cU} 1_{X,R,U} \sum_{i=0}^{n_U} m_{U,0,i} \otimes m_{U,1,i} \right)}_{\rCfin(X,M_0) \wh{\otimes}_R M_1} \\
& = & \n{\sum_{U \in \cU} \sum_{i=0}^{n_U} (1_{X,R,U} m_{U,0,i}) \otimes m_{U,1,i}}_{\rCfin(X,M_0) \wh{\otimes}_R M_1} \\
& \leq & \sup_{U \in \cU} \textstyle\max_{i=0}^{n_U}\displaystyle \n{1_{X,R,U} m_{U,0,i}}_{\rCfin(X,M_0)} \n{m_{U,1,i}}_{M_1} \\
& = & \sup_{U \in \cU} \textstyle\max_{i=0}^{n_U}\displaystyle \n{m_{U,0,i}}_{M_0} \n{m_{U,1,i}}_{M_1} < \sup_{U \in \cU} \n{e(U)}_{M_0 \wh{\otimes}_R M_1} + \epsilon \\
& < & \n{f}_{\rCfin(X,M_0 \wh{\otimes}_R M_1)} + \epsilon
\end{eqnarray*}
by $\cC = \NBAb$. This implies that $\varphi^{-1}$ is a contraction.

\vspace{0.1in}
We show the assertion (iii). We may assume $0 \notin S$. We denote by $\pi$ the natural morphism $\ell^{\cC}(S,R) \to M$ in $\Modiso_{\cC}(R)$. We denote by $E$ the null sequence
\begin{eqnarray*}
\ker(\pi) \hookrightarrow \ell^{\cC}(S,R) \stackrel{\pi}{\to} M \to 0
\end{eqnarray*}
of $\Modiso_{\cC}(R)$, which is strictly coexact by the assumption. We denote by $\varphi$ the natural morphism $\rCfin(X,R) \wh{\otimes}_R E \to \rCfin(X,E)$ of null sequences of $\Modiso_{\cC}(R)$, which is termwise bijective by the assertion (i). By Proposition \ref{Cfin is strongly exact}, $\rCfin(X,E)$ is strictly coexact. By the strong right exactness of $\rCfin(X,R) \wh{\otimes}_R (\cdot)$, $\rCfin(X,R) \wh{\otimes}_R E$ is strictly coexact. For each $r \in (0,\infty)$, we denote by $R r$ the regular $R$-module object rescaled by $r$. We consider the commutative diagram
\begin{eqnarray*}
\xymatrix{
\rCfin(X,R) \wh{\otimes}_R \ell^{\cC}(S,R) \ar@{=}[d] & \\
\rCfin(X,R) \wh{\otimes}_R \bigoplus_{m \in S} R \n{m}_M \ar[d] & \bigoplus_{m \in S} \rCfin(X,R) \wh{\otimes}_R R \n{m}_M \ar[l] \ar[d] \\
\rCfin \left(X, \bigoplus_{m \in S} R \n{m}_M \right) & \bigoplus_{m \in S} \rCfin(X,R \n{m}_M) \ar[l] \\
\rCfin(X,\ell^{\cC}(S,R)) \ar@{=}[u] & \\
}
\end{eqnarray*}
of $\Mod_{\cC}(R)$.The horizontal arrows are isomorphisms by the additivity of the functors and the right vertical arrow is an isomorphism by the definition of rescaling. This implies that the left vertical arrow is an isomorphism, and hence $\varphi$ is an isomorphism at the second term. Since $\varphi$ is a termwise bijective morphism between strictly coexact sequences and is an isomorphism at the second term, it is an isomorphism at the first term. By the universality of cokernels, $\varphi$ is an isomorphism at the third term, i.e.\ $A^{\cD_3}_{M_0,M_1}$ is an isomorphism.

\vspace{0.1in}
We show the assertion (iv). For this purpose, it suffices to show that the inverse of the underlying $R$-linear homomorphism $\psi$ of $A^{\cD_3}_{R,\ell^{\cC}((\Z,1_{\Z}),R)}$ is unbounded. We denote by $\iota$ the canonical embedding $(\Z,1_{\Z}) \hookrightarrow \ell^{\cC}((\Z,1_{\Z}),R)$. Since $R$ is isolated at $0$, there exists an $\epsilon \in (0,\infty)$ such that $\set{r \in R}{\n{r} < \epsilon} = \ens{0}$. Since $R \neq \ens{0}$, we have $\n{1}_R \neq 0$. It suffices to show that $\psi^{-1}$ is of operator norm $\geq \n{1}_R^{-1} \epsilon^2 n$ for any $n \in \N$. Put $f_n \coloneqq \sum_{i=0}^{n} 1_{X,R,\ens{n}} \otimes \iota(n) \in \rCfin(X,R) \wh{\otimes}_R \ell^{\cC}((\Z,1_{\Z}),R)$. Since $\psi(f_n)$ assigns $\iota(i)$ to each $i \in X$ with $0 \leq i \leq n$ and $0$ to each $i \in X$ with $i < 0$ or $i > n$, we have
\begin{eqnarray*}
\n{\psi(f_n)}_{\rCfin(X,\ell^{\cC}((\Z,1_{\Z}),R))} = \textstyle\max_{i=0}^{n}\displaystyle \n{\iota(i)}_{\ell^{\cC}((\Z,1_{\Z}),R)} = \textstyle\max_{i=0}^{n}\displaystyle \n{1}_R 1_{\Z}(i) = \n{1}_R \neq 0.
\end{eqnarray*}
For any $(g_h,a_h)_{h=0}^{m} \in (\rCfin(X,R),\ell^{\cC}((\Z,1_{\Z}),R))^{m+1}$ with $m \in \N$ and $\sum_{h=0}^{m} g_h \otimes a_h = f_n$, we have $m \geq n$ by the computation of the rank of the $(n \times n)$-matrix corresponding to $\sum_{h=0}^{m} (g_h(i))_{i=0}^{n} \otimes (a_h(j))_{j=0}^{n} \in \Z^{n+1} \otimes_{\Z} \Z^{n+1}$. This implies $\n{f_n}_{\rCfin(X,R) \wh{\otimes}_R \ell^{\cC}((\Z,1_{\Z}),R)} \geq \epsilon^2 n$, because every non-zero element of $\rCfin(X,R) \wh{\otimes}_R \ell^{\cC}((\Z,1_{\Z}),R)$ is of norm $\geq \epsilon^2$. This implies that $\psi^{-1}$ is of operator norm $\geq \n{1}_R^{-1} \epsilon^2 n$.
\end{proof}

\begin{rmk}
Theorem \ref{absorbing law} (i) just implies that $A^{\cD_3}_{M_0,M_1}$ is bijective but does not imply that it is an isomorphism, as the open mapping theorem does not necessarily hold for Banach $R$-modules. Indeed, although it is an isometric isomorphism when $\cC = \NBAb$ by Theorem \ref{absorbing law} (ii), it is not necessarily an isomorphism when $\cC = \BAb$ by Theorem \ref{absorbing law} (iv). The condition on $M_1$ in Theorem \ref{absorbing law} (iii) is not equivalent to the condition that $M_1$ is finitely generated as an $R$-module. Indeed, if $R = \Z_{\infty}$ and $M_1 = \Z_{\triv}$ (cf.\ Example \ref{example of Banach ring isolated at 0} (i) and (iii)), then $M_1$ is finitely generated but does not satisfy the condition. It is also remarkable that the bijectivity is a special feature of the isolated setting and the local field setting (cf.\ \cite{Mih14} Theorem 4.12). For example, when $\cC = \BAb$, $X$ is the residue field of $\Cp$, $R = M_0 = \Z_{\infty}$, and $M_1 = \Cp$, then the morphism $\rCbd(X,M_0) \wh{\otimes}_R M_1 \to \rCbd(X,M_0 \wh{\otimes}_R M_1)$ in $\Mod_{\cC}(\Z_\infty)$ analogous to $A^{\cD_3}_{M_0,M_1}$ is not surjective. Indeed, every function $X \to M_0 \wh{\otimes}_R M_1 \cong \Cp$ in the image of $A^{\cD_3}_{M_0,M_1}$ can be approximated by functions whose images are contained in finitely generated subfields of $\Cp$, but the \Teichmuller embedding $X \hookrightarrow \Cp$ cannot be approximated by such functions.
\end{rmk}

As an immediate consequence of Proposition \ref{isolated at 0 implies discrete} and Theorem \ref{absorbing law} (iii), we obtain the following corollary.

\begin{crl}
\label{Base change maximal points}
Suppose that $R$ is isolated at $0$. Let $M \in \Modiso_{\cC}(R)$. For any ideal $I \subset R$, $R/I$ is an object of $\Modiso_{\cC}(R)$ with respect to the quotient seminorm, and $A^{\cD_3}_{M,R/I}$ is an isomorphism. In particular, for any $x \in \cM(R)$, if $\set{a \in R}{x(a) = 0}$ is a maximal ideal, then $A^{\cD_3}_{M,\kappa(x)}$ is an isomorphism, where $\kappa(x)$ denotes the completed residue field at $x$.
\end{crl}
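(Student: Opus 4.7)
The plan is to reduce the corollary directly to Theorem \ref{absorbing law}~(iii) by exhibiting an explicit one-element presentation of the module $R/I$.

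First, I would identify $R/I$ as an object of $\Modiso_{\cC}(R)$. Because $R$ is isolated at $0$, Proposition \ref{isolated at 0 implies discrete} tells us that the topology on $R$ is discrete; in particular, every $R$-submodule $I \subset R$ is automatically closed, so the quotient seminorm on $R/I$ is in fact a norm. Applying Proposition \ref{Modiso is quasi-Abelian}~(i) to the regular $R$-module $R \in \Modiso_{\cC}(R)$ and the $R$-submodule $I$ then gives that $R/I$, equipped with its quotient seminorm, is an object of $\Modiso_{\cC}(R)$.

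Next, I would apply Theorem \ref{absorbing law}~(iii) with $(M_0, M_1) := (M, R/I)$, using the singleton $S := \ens{\ol{1}} \subset R/I$ with its induced normed-set structure $\v{\ol{1}}_{S} = \n{\ol{1}}_{R/I}$. The object $\ell^{\cC}(S, R)$ is then canonically isomorphic to $R$ as a Banach $R$-module (the two norms differ only by the positive constant factor $\n{\ol{1}}_{R/I}$), and under this identification the canonical morphism $\ell^{\cC}(S, R) \to R/I$ produced by the adjunction coincides with the quotient map $R \twoheadrightarrow R/I$, which is a strict epimorphism in $\Modiso_{\cC}(R)$ by the very construction of the quotient seminorm. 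Theorem \ref{absorbing law}~(iii) then yields that $A^{\cD_3}_{M, R/I}$ is an isomorphism, as required.

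For the ``in particular'' clause, I set $\mathfrak{m}_{x} := \set{a \in R}{x(a) = 0}$; by hypothesis this is maximal, so $R/\mathfrak{m}_{x}$ is a field. Since $R$ is discrete, so is $R/\mathfrak{m}_{x}$, which is thereby already complete, and the completed residue field $\kappa(x)$ is canonically identified with $R/\mathfrak{m}_{x}$ as an object of $\Modiso_{\cC}(R)$. Applying the first half of the corollary with $I = \mathfrak{m}_{x}$ then finishes the argument. I do not anticipate any substantial difficulty here; the only minor subtlety is the identification of $\kappa(x)$ with $R/\mathfrak{m}_{x}$ inside $\Modiso_{\cC}(R)$, which rests on the discreteness of the quotient in the isolated setting, while all other steps amount to a direct bookkeeping application of the machinery already established in this section.
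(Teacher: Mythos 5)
Your argument is correct and follows exactly the route the paper intends: the paper states this corollary as an immediate consequence of Proposition \ref{isolated at 0 implies discrete} and Theorem \ref{absorbing law}~(iii) without writing out a proof, and your use of Proposition \ref{Modiso is quasi-Abelian}~(i) for the first claim together with the one-element presentation $S=\ens{\ol{1}}$ (noting $\n{\ol{1}}_{R/I}>0$ whenever $I\neq R$, the case $I=R$ being trivial) to verify the hypothesis of Theorem \ref{absorbing law}~(iii) is precisely the missing bookkeeping. The only point worth recording is the one you already flag: the identification $\kappa(x)\cong R/\mathfrak{m}_x$ rests on reading the completion as taken with respect to the (discrete) quotient norm rather than with respect to the seminorm $x$ itself, which is how the statement must be read here, since otherwise $\kappa(x)$ need not even lie in $\Modiso_{\cC}(R)$ (e.g.\ $R=(\Q,\v{\cdot}_{\ad})$ and $x$ the $p$-adic absolute value give $\kappa(x)=\Qp$).
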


\section{Schauder bases and $\Z$-forms}
\label{Schauder bases and Z-forms}

In this last section, we study some further properties of the algebras $\rCfin(X, R)$ for any topological space $X$ and specific choices of the Banach ring $R$. We show in Proposition \ref{Cfin is free} that $\rC(\zeta(X),R)$ is free as an $R$-module if $R$ is isolated at $0$, and topologically free as a Banach $R$-module if $R$ is non-Archimedean by constructing (algebraic and topological) bases in both cases consisting of characteristic functions of clopen subsets. This extends well-known results about the existence of an orthonormal Schauder basis of $\rC(\zeta(X),R)$ when $R$ is a complete non-Archimedean valued field (cf.\ \cite{Mon70} IV 3 Corollaire 1, \cite{BGR84} 2.5.2 Lemma 2, and the proof of \cite{Sch02} Proposition 10.1). We also show in Example \ref{Mahler basis} in the specific case $X = \Z_p$ that the algebra $\rCfin(\Z_p, R)$ has another orthonormal Schauder basis, called the Mahler basis, whose properties under base change are very different from the ones of the orthonormal Schauder basis coming from the characteristic functions of clopen subsets. We conclude by proving the Stone--Weierstrass Theorem for $\rCfin(X, R)$, under a suitable hypothesis on $R$ in Theorem \ref{non-Archimedean Stone--Weierstrass}.

\vspace{0.1in}
Let $M \in \Mod_{\cC}(R)$. A subset $S \subset M$ is said to be a {\it $\cC$-orthonormal Schauder $R$-linear basis of $M$} if $S$ consists of elements of norm $1$ and the natural $R$-linear homomorphism $R^{\oplus S} \to M$ extends to an isometric $R$-linear isomorphism $\ell^{\cC}(S,R) \to M$. We recall that for a $U \in \CO(X)$, $\cl_{\zeta,X}(U)$ denotes the unique clopen subset of $\zeta(X)$ satisfying $\iota_{\zeta,X}^{-1}(\cl_{\zeta,X}(U)) = U$ (cf.\ Corollary \ref{extension property for clopen subset}). A monoid object $A$ of $\NBAb$ is said to be {\it with submultiplicative norm} (in the sense of \cite{BM21} p.\ 6) if the canonical morphisms $\Z_{\triv} \to A$ and $A \wh{\otimes} A \to A$ defining the monoid structure are contractions, or equivalently, $A$ is a monoid object of the symmetric subcategory of $\NBAb$ with the same class of objects whose class of morphisms consists of contracting morphisms. As a consequence of Theorem \ref{absorbing law} (i), we obtain a generalisation of \cite{Nob68} Satz 1 (cf.\ \cite{Sch19} Theorem 5.4).

\begin{prp}
\label{Cfin is free}
There exists an $\cF \subset \CO(X)$ satisfying the following:
\begin{itemize}
\item[(i)] For any monoid object $A$ of $\BAb$ isolated at $0$, i.e.\ a Banach ring in the sense in \cite{BM21} p.\ 6 isolated at $0$, $\rCfin(X,A)$ is a free $A$-module with $A$-linear basis $\set{1_{X,A,U}}{U \in \cF}$.
\item[(ii)] For any monoid object $A$ of $\NBAb$ with submultiplicative norm isolated at $0$, the $A$-linear basis $\set{1_{X,A,U}}{U \in \cF}$ of $\rCfin(X,A)$ is an $\NBAb$-orthonormal Schauder $A$-linear basis of $\rCfin(X,A)$.
\item[(iii)] For any complete non-Archimedean valued ring (resp.\ field) $A$, $\set{1_{\zeta(X),A,V}}{V \in \cF'}$ is an $\NBAb$-orthonormal Schauder $A$-linear basis of $\rC(\zeta(X),A)$, where $\cF'$ denotes the image of $\cF$ by $\cl_{\zeta,X}$.
\end{itemize}
\end{prp}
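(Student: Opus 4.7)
The plan is to reduce to the case $X \in \TDCH$ via Proposition \ref{extension property} and Corollary \ref{extension property for clopen subset}: writing $Y = \zeta(X)$, it suffices to construct $\cF' \subset \CO(Y)$ and take $\cF$ to be its preimage under $\cl_{\zeta, X}$. I would construct $\cF'$ by transfinite induction, starting from $\{Y\}$ and processing a well-ordered cofinal chain $\cB_0 \subset \cB_1 \subset \cdots$ of finite Boolean subalgebras of $\CO(Y)$ (with $\cB_0 = \{\emptyset, Y\}$ and Zorn's lemma handling the bookkeeping at limit ordinals, in the spirit of \cite{Nob68}). At each successor step $\cB_\alpha \subset \cB_{\alpha+1}$, for every atom $W$ of $\cB_\alpha$ that refines into atoms $W = W_1 \sqcup \cdots \sqcup W_k$ of $\cB_{\alpha+1}$, arbitrarily designate $W_1$ as the ``first child'' and adjoin $W_2, \ldots, W_k$ to $\cF'$.

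For part (i), I would verify by induction on $\alpha$ that the incidence matrix $T_\alpha$ expressing $\{1_{Y, \Z, V}\}_{V \in \cF' \cap \cB_\alpha}$ in terms of the atomic indicators of $\cB_\alpha$ is invertible over $\Z$. At the successor step, ordering the new rows (the non-first children adjoined to $\cF'$) and new columns (the non-first child atoms of $\cB_{\alpha+1}$) in the lower right and performing elementary row operations to cancel the upper-right block brings $T_{\alpha+1}$ into block-diagonal form with $T_\alpha$ in the upper left and an identity block in the lower right, so $|\det T_{\alpha+1}| = |\det T_\alpha| = 1$. Taking the colimit and using Proposition \ref{Cfin is closed}(ii) yields that $\{1_{Y, \Z, V}\}_{V \in \cF'}$ is a $\Z$-basis of $\rC(Y, \Z)$. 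For a Banach ring $A$ isolated at $0$, Theorem \ref{absorbing law}(i) applied with $R = \Z_{\triv}$, $M_0 = \Z_{\triv}$, and $M_1 = A$ (viewed as an object of $\Modiso_{\BAb}(\Z_{\triv})$) gives a bijective $A$-linear map $\rC(Y, \Z) \otimes_{\Z} A \to \rC(Y, A)$ sending $1_{Y, \Z, V} \otimes 1$ to $1_{Y, A, V}$, establishing (i).

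For part (ii), I would prove orthonormality by induction on $\alpha$. Given $f = \sum_{V \in \cF' \cap \cB_\alpha} a_V 1_{Y, A, V}$, the inequality $\|f\|_{\sup} \leq \max_V |a_V|_A$ is the ultrametric inequality. For the reverse, set $s_W = \sum_{V \in \cF' \cap \cB_\alpha,\, W \subset V} a_V$ for each atom $W$ of $\cB_\alpha$; the inductive hypothesis gives $\max_W |s_W| = \max_{V \in \cF' \cap \cB_\alpha} |a_V|$. In $\cB_{\alpha+1}$, the function $f$ equals $s_W$ on the first child of $W$ and $s_W + a_{W_j}$ on each non-first child $W_j \in \cF' \cap \cB_{\alpha+1}$. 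The ultrametric inequality then gives $\|f\|_{\sup} \geq \max(|s_W|, |a_{W_j}|)$ for each such non-first child, using that when $|s_W| = |a_{W_j}|$ the value $|s_W|$ itself is realized on the first child's subtree. Since $A$ is isolated at $0$, $\ell^{\NBAb}(\cF', A) = A^{(\cF')}$ as normed $A$-modules, so the isometric isomorphism follows. For part (iii), where $A$ is a complete non-Archimedean valued ring without the isolation hypothesis, the algebraic identity $\rCfin(Y, A) \cong \rCfin(Y, \Z) \otimes_{\Z} A$ still holds by a direct verification (the linear independence of $\{1_{Y, \Z, V}\}_{V \in \cF'}$ over $\Z$ is preserved after base change to $A$), and the tree-based orthonormality argument still applies to finite sums, producing an isometric $A$-linear bijection $A^{(\cF')} \cong \rCfin(Y, A)$; passing to completions using the density of locally constant functions in $\rC(Y, A)$ gives the desired isometric isomorphism $\ell^{\NBAb}(\cF', A) \cong \rC(Y, A)$.

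The main obstacle is the orthonormality induction in step (ii): careful tracking of the ancestral sums $s_W$ together with the asymmetric role of first-versus-non-first children is what precludes undetectable cancellation in the non-Archimedean setting, and one must be mindful that an arbitrary $\Z$-basis of characteristic functions need not be orthonormal (one can check explicitly that the configuration $U_1 = W_1 \sqcup W_2$, $U_2 = W_1 \sqcup W_3$, $U_3 = W_2 \sqcup W_3$ is $\Z$-linearly independent but fails orthonormality over $\Q_2$), so the tree structure is essential. The transfinite construction at limit ordinals in step 1 is a secondary technicality handled by Zorn's lemma.
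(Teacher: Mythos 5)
The construction of $\cF$ is where your argument has a genuine gap. A strictly increasing well-ordered chain of \emph{finite} Boolean subalgebras of $\CO(\zeta(X))$ necessarily has order type at most $\omega$, so its union is countable; for a general $X$ the Boolean algebra $\CO(\zeta(X)) \cong \CO(X)$ is uncountable, and at a limit stage the union of the previous $\cB_\alpha$ is an infinite subalgebra, so the ``refine the atoms of $\cB_\alpha$ into atoms of $\cB_{\alpha+1}$ and adjoin the non-first children'' step no longer makes sense (an infinite Boolean algebra need not be atomic, and even when it is, its atoms do not generate it). Invoking Zorn's lemma ``for the bookkeeping'' does not repair this: the real difficulty is to show that the subgroup of $\rCfin(X,\Z)$ generated by the idempotents of an infinite subalgebra sits inside the whole group with free quotient, so that a basis can be extended across limit stages. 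That is precisely the content of N\"obeling's solution of Specker's problem, and the paper does not reprove it --- it takes the existence of $\cF$ directly from \cite{Nob68} Satz 1 and spends its effort only on the base-change and norm statements. Your tree argument is essentially the classical van der Put construction and is fine when $\CO(X)$ is countable, but it does not prove part (i) in the stated generality.

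There is also an error in your base change for part (i): an Archimedean Banach ring $A$ isolated at $0$ (for instance $A=\Z_\infty$ itself) is \emph{not} an object of $\Modiso_{\BAb}(\Z_{\triv})$, since $\n{n\cdot a}_A$ is unbounded in $n$ while $\n{n}_{\triv}\n{a}_A=\n{a}_A$; the paper instead applies Theorem \ref{absorbing law} (i) over $R=\Z_{\infty}$, the initial monoid object of $\BAb$ (your choice $R=\Z_{\triv}$ is legitimate only in the non-Archimedean submultiplicative setting of part (ii), which is exactly where the paper uses it, together with the observation that over $\Z_{\triv}$ every basis is automatically orthonormal because the sup norm is trivial). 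Finally, your closing remark that ``the tree structure is essential'' for orthonormality is misleading: your three-element configuration has incidence matrix of determinant $\pm 2$, so it is $\Z$-linearly independent but not a $\Z$-basis. The paper's proof of (iii) (reduction modulo the maximal ideal to the residue field, where part (i) applies) shows that \emph{any} $\Z$-linear basis of $\rCfin(X,\Z_{\triv})$ becomes an $\NBAb$-orthonormal Schauder basis after base change --- see Corollary \ref{construction of orthonormal Schauder basis for C(zeta(X),k)} --- so what matters is unimodularity over $\Z$, not the tree shape.
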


\begin{proof}
We show the assertion (i). When $X = \emptyset$, then $\cF$ can be taken as $\emptyset$. Therefore, we may assume $X \neq \emptyset$. By $X \neq \emptyset$ and \cite{Nob68} Satz 1, there exists an $\cF \subset \CO(X)$ such that $\set{1_{X,\Z_{\infty},U}}{U \in \cF}$ is a $\Z$-linear basis of $\rCfin(X,\Z_{\infty})$. Since $\Z_{\infty}$ is an initial object in the category of monoids of $\BAb$ and monoid homomorphisms, $\set{1_{X,A,U}}{U \in \cF}$ is an $A$-linear basis of $\rCfin(X,A)$ for any monoid $A$ of $\BAb$ by Theorem \ref{absorbing law} (i) applied to $(R,M_0,M_1) = (\Z_{\infty},\Z_{\infty},A)$.

\vspace{0.1in}
We show the assertion (ii). By the assertion (i), there exists an $\cF \subset \CO(X)$ such that $\set{1_{X,\Z_{\triv},U}}{U \in \cF}$ is a $\Z$-linear basis of $\rCfin(X,\Z_{\triv})$. Since $\n{\cdot}_{\rCfin(X,\Z_{\triv})}$ is the trivial norm, any $\Z$-linear basis of $\rCfin(X,\Z_{\triv})$ is an $\NBAb$-orthonormal Schauder $\Z_{\triv}$-linear basis. By Theorem \ref{absorbing law} (ii), the canonical morphism $\cA^{\cD_3}_{\Z_{\triv}, A} \colon \rCfin(X,\Z_{\triv}) \wh{\otimes}_{\Z_{\triv}} A \to \rCfin(X,\Z_{\triv} \wh{\otimes}_{\Z_{\triv}} A)$ is an isometric isomorphism. Since the unitor $\Z_{\triv} \wh{\otimes}_{\Z_{\triv}} A \to A$ is an isometry by the assumption of the submultiplicativity, the $A$-linear basis $\set{1_{X,A,U}}{U \in \cF}$ of $\rCfin(X,A)$ is an $\NBAb$-orthonormal Schauder $A$-linear basis.

\vspace{0.1in}
We show the assertion (iii). Let $k$ be a complete non-Archimedean valued field. Put $O_k \coloneqq \set{c \in k}{\v{c} \leq 1}$, $m_k \coloneqq \set{c \in O_k}{\v{c} < 1}$, and $\kappa_k \coloneqq O_k/m_k$. By the assertion (i), $\set{1_{X,\kappa_k,U}}{U \in \cF}$ forms a $\kappa_k$-linear basis of $\rCfin(X,\kappa_k)$. Since $\kappa_k$ is isolated at $0$, $\set{1_{\zeta(X),\kappa_k,\cl_{\zeta,X}(U)}}{U \in \cF}$ forms a $\kappa_k$-linear basis of $\rC(\zeta(X),\kappa_k)$ by Proposition \ref{extension property} and Corollary \ref{extension property for clopen subset}. For any $U_0 \in \CO(X)$, $1_{X,k,U_0}$ belongs to the Abelian subgroup of $k^X$ generated by $\set{1_{X,k,U}}{U \in \cF}$ by the definition of $\cF$. Therefore, for any $\ol{U}_0 \in \CO(\zeta(X))$, $1_{\zeta(X),k,\ol{U}_0}$ belongs to the Abelian subgroup of $k^{\zeta(X)}$ generated by $\set{1_{\zeta(X),k,\cl_{\zeta,X}(U)}}{U \in \cF}$ by Corollary \ref{extension property for clopen subset}. This implies that $\set{1_{\zeta(X),k,\cl_{\zeta,X}(U)}}{U \in \cF}$ generates the Abelian subgroup of $\rC(\zeta(X),k)$ generated by $\set{1_{\zeta(X),k,\ol{U}}}{\ol{U} \in \CO(\zeta(X))}$ again by Corollary \ref{extension property for clopen subset}, and hence generates a dense $k$-subalgebra of $\rC(\zeta(X),k)$ by \cite{Mih14} Theorem 4.12.

\vspace{0.1in}
Therefore, it suffices to show $\n{\sum_{U \in \cF} a_U 1_{\zeta(X),k,\cl_{\zeta,X}(U)}}_{\rC(\zeta(X),k)} = \sup_{U \in \cF} \v{a_U}$ for any $(a_U)_{U \in \cF} \in k^{\oplus \cF}$. By the finiteness of $\set{a_U}{U \in \cF} \subset k$, there exists a $U_0 \in \cF$ such that $\v{a_U} \leq \v{a_{U_0}}$ for any $U \in \cF$. If $a_{U_0} = 0$, then both hand sides are $0$. Therefore, we may assume $a_{U_0} \neq 0$. We have $(a_{U_0}^{-1} a_U)_{U \in \cF} \in O_k^{\oplus \cF} \setminus m_k^{\oplus \cF}$, and hence $((a_{U_0}^{-1} a_U) + m_k)_{U \in \cF}$ is a non-zero element of $\kappa_k^{\oplus \cF}$. Since $\set{1_{\zeta(X),\kappa_k,\cl_{\zeta,X}(U)}}{U \in \cF}$ is a $\kappa_k$-linear basis of $\rC(\zeta(X),\kappa_k)$, we have $\sum_{U \in \cF} ((a_{U_0}^{-1} a_U) + m_k) 1_{\zeta(X),\kappa_k,\cl{\zeta,X}(U)} \neq 0$. Therefore, there exists an $x \in \zeta(X)$ such that $\sum_{U \in \cF} ((a_{U_0}^{-1} a_U) + m_k) 1_{\zeta(X),\kappa_k,\cl{\zeta,X}(U)}(x) \neq 0$, or equivalently, $\sum_{U \in \cF \cap x} ((a_{U_0}^{-1} a_U) + m_k) \neq 0$ by the definition of $\zeta(X)$ as the Stone space of $\CO(X)$. This implies $\sum_{U \in \cF \cap x} a_{U_0}^{-1} a_U \in O_k \setminus m_k$, and hence $\v{\sum_{U \in \cF \cap x} a_U} = \v{a_{U_0}}$. We obtain
\begin{eqnarray*}
& & \n{\sum_{U \in \cF} a_U 1_{\zeta(X),k,\cl_{\zeta,X}(U)}}_{\rC(\zeta(X),k)} \geq \v{\sum_{U \in \cF} a_U 1_{\zeta(X),k,\cl_{\zeta,X}(U)}(x)} = \v{\sum_{U \in \cF \cap x} a_U} \\
& = & \v{a_{U_0}} = \sup_{U \in \cF} \v{a_U}.
\end{eqnarray*}
\end{proof}

Suppose that $R$ is isolated at $0$. For an $A \in \Alg_{\cC}(R)$, we denote by $B^{\cD_3}_A$ the composition $\rCfin(X,R) \wh{\otimes}_R A \to \rC(\zeta(X),A)$ of the inverse of the restriction map $\rC(\zeta(X),R) \wh{\otimes}_R A \to \rCfin(X,R) \wh{\otimes}_R A$, which is an isometric isomorphism in $\Alg_{\cC}(A)$ by Proposition \ref{extension property}, and the natural morphism $\rC(\zeta(X),R) \wh{\otimes}_R A \to \rC(\zeta(X),A)$ in $\Alg_{\cC}(A)$.

\begin{crl}
\label{existence of Z-form}
Let $R$ be a monoid object of $\NBAb$ with submultiplicative norm isolated at $0$ or a complete non-Archimedean valued ring (resp.\ field). Then $B^{(\NBAb,X,\Z_{\triv})}_R$ is an isometric $R$-linear isomorphism.
\end{crl}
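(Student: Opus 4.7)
The plan is to use the $\NBAb$-orthonormal Schauder bases constructed in Proposition~\ref{Cfin is free}. Let $\cF \subset \CO(X)$ be the subset produced there, and regard it as a normed set with constant norm $1$ (cf.\ Example~\ref{trivially normed set}~(ii)). Since $\Z_{\triv}$ is a monoid object of $\NBAb$ with submultiplicative norm isolated at $0$, Proposition~\ref{Cfin is free}~(ii) gives an isometric $\Z_{\triv}$-linear isomorphism $\rCfin(X,\Z_{\triv}) \cong \ell^{\NBAb}(\cF,\Z_{\triv})$. In both hypotheses on $R$, combining Proposition~\ref{Cfin is free}~(ii) (in the first case) or (iii) (in the second case) with Proposition~\ref{Cfin is closed}~(ii), Proposition~\ref{extension property} and Corollary~\ref{extension property for clopen subset} yields an isometric $R$-linear isomorphism $\rC(\zeta(X),R) \cong \ell^{\NBAb}(\cF,R)$ matching each $1_{\zeta(X),R,\cl_{\zeta,X}(U)}$ with the corresponding generator indexed by $U \in \cF$.

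Next, I would apply Proposition~\ref{isolated free module}~(ii) to the base $\Z_{\triv}$, the algebra $R$, and the normed set $\cF$. The equality $\n{1}_{\Z_{\triv}} = 1$ is immediate. The submultiplicativity of $\n{\cdot}_R$ over $\Z_{\triv}$ amounts to showing $\n{n \cdot r}_R \leq \n{n}_{\Z_{\triv}} \n{r}_R$ for all $(n,r) \in \Z \times R$, which reduces for $n \neq 0$ to $\n{n \cdot r}_R \leq \n{r}_R$; this follows from iterated application of the non-Archimedean triangle inequality in $R$. Hence the canonical morphism $\ell^{\NBAb}(\cF,\Z_{\triv}) \wh{\otimes}_{\Z_{\triv}} R \to \ell^{\NBAb}(\cF,R)$ is an isometric $R$-linear isomorphism.

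Composing the three isometric isomorphisms above produces an isometric $R$-linear isomorphism $\rCfin(X,\Z_{\triv}) \wh{\otimes}_{\Z_{\triv}} R \to \rC(\zeta(X),R)$. The remaining task, which I expect to be the principal bookkeeping obstacle, is to verify that this composition coincides with $B^{(\NBAb,X,\Z_{\triv})}_R$. This can be checked on the dense generating set $\set{1_{X,\Z_{\triv},U} \otimes r}{(U,r) \in \cF \times R}$: the composition sends $1_{X,\Z_{\triv},U} \otimes r$ to $r \cdot 1_{\zeta(X),R,\cl_{\zeta,X}(U)}$, and unwinding the definition of $B^{(\NBAb,X,\Z_{\triv})}_R$ through $\rC(\zeta(X),\Z_{\triv}) \wh{\otimes}_{\Z_{\triv}} R$ and Corollary~\ref{extension property for clopen subset} yields the same image, so the two bounded maps agree everywhere by continuity.
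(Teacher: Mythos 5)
Your proposal is correct and follows essentially the same route as the paper: the paper's proof of Corollary \ref{existence of Z-form} for the valued-ring case is exactly your argument (the basis $\cF$ from Proposition \ref{Cfin is free}, the identification of both sides with $\ell^{\NBAb}(\cF,\cdot)$, and Proposition \ref{isolated free module} (ii) applied to $(\NBAb,\Z_{\triv},R)$, checked on the generators $1_{X,\Z_{\triv},U}\otimes r$). The only cosmetic difference is that for the isolated case the paper shortcuts via Proposition \ref{extension property} and the fact that $B^{(\NBAb,X,\Z_{\triv})}_R$ factors through the isometric isomorphism $A^{\cD_3}_{\Z_{\triv},R}$ of Theorem \ref{absorbing law} (ii), whereas you treat both cases uniformly through Schauder bases, which is equally valid.
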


\begin{proof}
The claim for the isolated case follows from Proposition \ref{extension property} and Proposition \ref{Cfin is free} (ii), because $B^{(\NBAb,X,\Z_{\triv})}_R$ coincides with the composite of $A^{\cD_3}_{\Z_{\triv},R}$ and the inverse of the restriction map $\rC(\zeta(X),R) \to \rCfin(X,R)$. Suppose that $R$ is a complete valued ring (resp.\ field). By Proposition \ref{Cfin is free} (iii), there exists an $\cF \subset \CO(X)$ such that $\set{1_{X,\Z_{\triv},U}}{U \in \cF}$ is a $\Z$-linear basis of $\rCfin(X,\Z_{\triv})$ and $\set{1_{\zeta(X),R,\cl_{\zeta,X}(U)}}{U \in \cF}$ is an $\NBAb$-orthonormal Schauder $R$-linear basis of $\rC(\zeta(X),R)$. Since $\n{\cdot}_{\rCfin(X,\Z_{\triv})}$ is the trivial norm, any $\Z$-linear basis of $\rCfin(X,\Z_{\triv})$ is an $\NBAb$-orthonormal Schauder $\Z_{\triv}$-linear basis. Therefore, the natural $\Z_{\triv}$-linear homomorphism $\Z_{\triv}^{\oplus \cF} \to \rCfin(X,\Z_{\triv})$ gives an isometric $\Z_{\triv}$-linear isomorphism $\ell^{\NBAb}(\cF,\Z_{\triv}) \to \rCfin(X,\Z_{\triv})$, and $\set{1_{X,\Z_{\triv},U} \otimes 1}{U \in \cF}$ is an $\NBAb$-orthonormal Schauder $R$-linear basis of $\rCfin(X,\Z_{\triv}) \wh{\otimes}_{\Z_{\triv}} R$ by Proposition \ref{isolated free module} (ii) applied to $(\NBAb,\Z_{\triv},R)$. Since $B^{(\NBAb,X,\Z_{\triv})}_R$ sends the $\NBAb$-orthonormal Schauder $R$-linear basis $\set{1_{X,\Z_{\triv},U} \otimes 1}{U \in \cF}$ to the $\NBAb$-orthonormal Schauder $R$-linear basis $\set{1_{\zeta(X),R,\cl_{\zeta,X}(U)}}{U \in \cF}$, it is an isometric $R$-linear isomorphism.
\end{proof}

The existence of an $\NBAb$-orthonormal Schauder $R$-linear basis for $\rCfin(X, R)$ has the following remarkable consequence:

\begin{crl}
\label{Schauder basis implies strongly flat}
Let $R$ be a monoid object of $\NBAb$ isolated at $0$. Then, for all topological spaces $X$, the Banach $R$-module $\rCfin(X, R)$ is strongly flat as an object of $\Mod_{\NBAb}(R)$. 
\end{crl}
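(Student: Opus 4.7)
The plan is to apply Proposition \ref{Cfin is free} (ii) to reduce the question to the strong flatness of a free object of the form $\ell^{\NBAb}(S, R)$, and then to verify this by a coordinate-wise kernel computation. Since strong flatness depends only on the isomorphism class of the object in $\Mod_{\NBAb}(R)$, I first replace $\n{\cdot}_R$ by an equivalent submultiplicative norm (which does not change $\Mod_{\NBAb}(R)$, so the property of being strongly flat is unaffected). Under this assumption, Proposition \ref{Cfin is free} (ii) provides an isometric $R$-linear isomorphism $\rCfin(X, R) \cong \ell^{\NBAb}(\cF_0, R)$, where $\cF_0 \subset \CO(X)$ is viewed as a normed set with every element of norm $1$. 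It therefore suffices to establish that $\ell^{\NBAb}(\cF_0, R)$ is strongly flat.

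For this, I would first extend the natural isomorphism of Proposition \ref{isolated free module} (ii) from the algebra case to arbitrary Banach $R$-modules, obtaining a natural isomorphism $\ell^{\NBAb}(\cF_0, R) \wh{\otimes}_R M \cong \ell^{\NBAb}(\cF_0, M)$ in $\Mod_{\NBAb}(R)$ for every $M$, where the right-hand side denotes the completion of $M^{\oplus \cF_0}$ with respect to the coordinate-wise supremum seminorm. The argument is essentially identical to the proof of Proposition \ref{isolated free module} (ii), using the universal property of the completed tensor product together with the universal property of $\ell^{\NBAb}$ as the free functor on normed sets, combined with the orthogonality property of the non-Archimedean supremum seminorm that makes it into a cross norm.

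Given this identification, strong flatness reduces to showing that $\ell^{\NBAb}(\cF_0, \cdot)$ preserves strict kernels. Let $\iota \colon K \hookrightarrow M$ be the strict kernel of a morphism $f \colon M \to N$ in $\Mod_{\NBAb}(R)$; by passing to an equivalent norm on $K$, I may assume $\iota$ is isometric. Then $\ell^{\NBAb}(\cF_0, \iota)$ is coordinate-wise isometric, hence a strict monomorphism; moreover, an element $(m_U)_{U \in \cF_0}$ is sent to $0$ by $\ell^{\NBAb}(\cF_0, f)$ precisely when every $m_U$ lies in $K$, so the kernel is exactly $\ell^{\NBAb}(\cF_0, K)$ with its natural supremum norm, which coincides with the norm restricted from $\ell^{\NBAb}(\cF_0, M)$. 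The main obstacle I expect is verifying the natural isomorphism $\ell^{\NBAb}(\cF_0, R) \wh{\otimes}_R (\cdot) \cong \ell^{\NBAb}(\cF_0, \cdot)$ for all Banach $R$-modules rather than only for algebras, but this should follow formally from the same style of universal-property argument as in the algebra case.
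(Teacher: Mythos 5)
Your proposal is correct, and its first half coincides with the paper's proof: both reduce to a submultiplicative norm and then invoke Proposition \ref{Cfin is free} (ii) to identify $\rCfin(X,R)$ with $\ell^{\NBAb}(S,R)$ for a trivially normed set $S$. You diverge at the final step. The paper simply observes that $\ell^{\NBAb}(S,R)$ is a projective object of $\Mod_{\NBAb}(R)$ and cites \cite{BK20} Proposition 3.11 for the implication ``projective $\Rightarrow$ strongly flat'', which finishes the argument in one line. You instead verify strong flatness by hand, via the base-change formula $\ell^{\NBAb}(S,R) \wh{\otimes}_R M \cong \ell^{\NBAb}(S,M)$ for arbitrary $M \in \Mod_{\NBAb}(R)$ followed by the coordinate-wise computation of kernels; since the paper defines strong flatness precisely as preservation of kernels (using the automatic strong right exactness of the tensor product), this does close the argument. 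Your route is more self-contained but requires you to extend Proposition \ref{isolated free module} (ii) from algebras to modules; note that the bijectivity part of that proposition's proof leans on the adjunction for the extension of scalars along $R \to A$, which is not available for a bare module, so you should replace it by the direct density argument (both canonical maps between $\ell^{\NBAb}(S,R) \wh{\otimes}_R M$ and the completion of $M^{\oplus S}$ are contractions whose composites are the identity on dense subspaces), while the isometry computation in the paper's proof of that proposition is already purely module-theoretic and carries over verbatim. This is a minor repair, not a gap; the paper's citation of the general projectivity result is the more economical path, whereas yours makes the mechanism of flatness for these free modules explicit.
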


\begin{proof}
We may assume that the canonical morphisms $\Z_{\triv} \to R$ and $R \wh{\otimes} R \to R$ are contractions by a standard argument (cf.\ \cite{BM21} p.\ 6). Therefore, by Proposition \ref{Cfin is free} (ii), $\rCfin(X,R)$ admits an $\NBAb$-orthonormal Schauder $R$-linear basis, and hence we have that $\rCfin(X, R) \cong \ell^{\NBAb}((S,1_S), R)$ for a set $S$ (cf.\ Example \ref{trivially normed set} (ii)). This implies that $\rCfin(X, R)$ is a projective object of $\Mod_{\NBAb}(R)$ and projective objects are strongly flat by \cite{BK20} Proposition 3.11.
\end{proof}

Let us fix two monoid objects $R$ and $R_0$ of $\cC$ and a morphism $R_0 \to R$ of monoids. For an $A \in \Alg_{\cC}(R)$, a {\it $(\cC,R_0)$-form of $A$ over $R_0$} is an object $A_{R_0} \in \Alg_{\cC}(R_0)$ equipped with an isomorphism $\varphi_A \colon A_{R_0} \wh{\otimes}_{R_0} R \to A$ in $\Alg_{\cC}(R)$. The most interesting case is when $(\cC,R_0) = (\BAb,\Z_\infty)$ or $(\NBAb,\Z_{\triv})$, in which case the $(\cC,R_0)$-forms of monoid objects of $\cC$ have an absolute nature due to the initiality of $R_0$. Indeed, the symmetric monoidal structure $\wh{\otimes}$ of $\cC$ is naturally identified with $\wh{\otimes}_{\Z_{\infty}}$ when $\cC = \BAb$, and with $\wh{\otimes}_{\Z_{\triv}}$ when $\cC = \NBAb$. We will talk generically of {\it $\Z$-forms} of a Banach algebra when both $(\BAb,\Z_{\infty})$ or $(\NBAb,\Z_{\triv})$ are considered or allowed.

\begin{exm}
\label{examples Z-forms}
\begin{itemize}
\item[(i)] By the proof of Corollary \ref{existence of Z-form}, we know that $\rCfin(X,\Z_{\triv})$ is a canonical $(\NBAb,\Z_{\triv})$-form of $\rC(\zeta(X),k)$ over $k$ for any complete non-Archimedean valued field $k$. 
\item[(ii)] On the contrary, $\rCfin(X,\Z_{\infty})$ is not necessarily a $(\BAb,\Z_{\infty})$-form of $\rCfin(X,\R)$ of $\rCfin(X,\C)$ if $X$ is a connected compact Hausdorff topological space with at least two points. For example, the image of $\rCfin(X,\Z_{\infty}) \wh{\otimes}_{\Z_\infty} \R$ is a proper subalgebra of $\rCfin(X,\R)$.
\item[(iii)] The $\Z$-forms of a fixed Banach algebra $A \in \Alg_{\cC}(R)$ are not necessarily unique. Later on in this section we will present an important example of a Banach algebra admitting two non-isomorphic $\Z$-forms. These amounts to two different Schauder bases for the Banach algebra $\rC(\zeta(X),R)$. See Example \ref{Mahler basis} for details.
\end{itemize}
\end{exm}

It is important to recognise when a Banach algebra admits a $\Z$-form. We show that the notion of a $\Z$-form is essentially equivalent to that of an orthonormal Schauder basis whose multiplication table is a matrix with coefficients in $\Z$.

\begin{prp}
\label{construction of orthonormal Schauder basis}
Let $A \in \Alg_{\NBAb}(R)$.
\begin{itemize}
\item[(i)] Let $A_{\Z}$ be an $(\NBAb,\Z_{\triv})$-form of $A$ over $R$ such that $\n{\cdot}_{A_{\Z}}$ is the trivial norm. For any $\Z$-linear basis $S$ of $A_{\Z}$, $\varphi_A(S)$ is an $\NBAb$-orthonormal Schauder $R$-linear basis of $A$ whose multiplication table is a matrix with coefficients in $\Z$.
\item[(ii)] Let $S_R \subset A$ be an $\NBAb$-orthonormal Schauder $R$-linear basis of $A$ such that the $\Z$-submodule $\Z S_R \subset A$ generated by $S_R$ is a subring. Then, $(\Z S_R)_{\triv}$ is an $(\NBAb,\Z_{\triv})$-form of $A$ over $R$ with respect to the morphism $(\Z S_R)_{\triv} \wh{\otimes}_{\Z_{\triv}} R \to A$ in $\Alg_{\NBAb}(R)$ induced by the inclusion $\Z S_R \hookrightarrow A$.
\end{itemize}
\end{prp}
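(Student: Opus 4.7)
The plan is to reduce both parts to Proposition \ref{isolated free module} (ii), using the observation that a Banach module over $\Z_{\triv}$ equipped with the trivial norm is canonically isometrically isomorphic to $\ell^{\NBAb}((S, 1_S), \Z_{\triv})$ on any $\Z$-linear basis $S$, so the two statements follow from the base-change formula for free modules combined with the functoriality of $\varphi_A$.

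For (i), I would first note that since $\n{\cdot}_{A_{\Z}}$ is the trivial norm, $A_{\Z}$ is isolated at $0$, each $s \in S$ has norm $1$, and the $\Z$-linear bijection $\Z^{\oplus S} \to A_{\Z}$ is automatically an isometry when the left hand side is viewed as $\ell^{\NBAb}((S, 1_S), \Z_{\triv})$ via Proposition \ref{isolated free module} (i) (a sum with trivial-norm coefficients is either $0$ or has norm $1$, matching the trivial norm on $A_{\Z}$). Next, I would apply Proposition \ref{isolated free module} (ii) with base ring $\Z_{\triv}$ and monoid object $R$ to obtain an isometric isomorphism $\ell^{\NBAb}((S, 1_S), \Z_{\triv}) \wh{\otimes}_{\Z_{\triv}} R \cong \ell^{\NBAb}((S, 1_S), R)$. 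The hypotheses to verify here are $\n{1}_{\Z_{\triv}} = 1$ (clear) and submultiplicativity of $\n{\cdot}_R$ over $\Z_{\triv}$, which is automatic from the non-Archimedean inequality $\n{n r}_R \le \n{r}_R$ for $n \in \Z$. Composing with $\varphi_A$ then identifies $A$ isometrically with $\ell^{\NBAb}(\varphi_A(S), R)$, which is exactly the definition of $\varphi_A(S)$ being an $\NBAb$-orthonormal Schauder $R$-linear basis. The multiplication-table claim is then immediate, since for $s, t \in S$ the product $s t \in A_{\Z}$ is a finite $\Z$-linear combination of elements of $S$, and this relation is transported to $A$ via the algebra morphism $\varphi_A$.

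For (ii), the argument runs in reverse. Orthonormality of $S_R$ forces $R$-linear independence and hence $\Z$-linear independence, so $S_R$ is a $\Z$-linear basis of $\Z S_R$; together with the hypothesis that $\Z S_R \subset A$ is a subring this makes $(\Z S_R)_{\triv}$ an object of $\Algiso_{\NBAb}(\Z_{\triv})$. As in (i), the identity gives an isometric isomorphism $\ell^{\NBAb}((S_R, 1_{S_R}), \Z_{\triv}) \cong (\Z S_R)_{\triv}$. Base-changing through Proposition \ref{isolated free module} (ii) then yields an isometric $R$-linear isomorphism
\[
(\Z S_R)_{\triv} \wh{\otimes}_{\Z_{\triv}} R \xrightarrow{\ \cong\ } \ell^{\NBAb}((S_R, 1_{S_R}), R),
\]
and the orthonormal Schauder basis hypothesis provides an isometric $R$-linear iso $\ell^{\NBAb}((S_R, 1_{S_R}), R) \cong A$. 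The composite sends each elementary tensor $z \otimes r$ to $zr \in A$, and because $\Z S_R$ is a subring of $A$ this composite is multiplicative; hence it is an isomorphism in $\Alg_{\NBAb}(R)$ realising $(\Z S_R)_{\triv}$ as an $(\NBAb, \Z_{\triv})$-form of $A$ over $R$.

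The main obstacle I anticipate is keeping the isometry, not just boundedness, throughout the chain of identifications: the definition of an orthonormal Schauder basis and the definition of a $\Z$-form both need to be interpreted in the isometric sense for the argument to go through, and the critical technical input that makes this work is the refined part of Proposition \ref{isolated free module} (ii), whose hypotheses ($\n{1}_R = 1$ and submultiplicativity) must be checked with $R$ there replaced by $\Z_{\triv}$ and $A$ there replaced by our Banach ring $R$. Once that verification is in place, both directions are essentially formal manipulations with $\ell^{\NBAb}$ and the base-change formula.
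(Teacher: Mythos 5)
Your overall route is the same as the paper's: identify the $\Z$-form with a free module $\ell^{\NBAb}$ on the given basis and invoke Proposition \ref{isolated free module} (ii) for the base change along $\Z_{\triv} \to R$. Part (i) is correct as written, and you rightly flag that both $\varphi_A$ and the notion of $\Z$-form have to be read isometrically for the conclusion to follow; your verification of the hypotheses $\n{1}_{\Z_{\triv}} = 1$ and submultiplicativity of $\n{\cdot}_R$ over $\Z_{\triv}$ is also what is needed.

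In part (ii), however, the step ``orthonormality of $S_R$ forces $R$-linear independence and hence $\Z$-linear independence, so $S_R$ is a $\Z$-linear basis of $\Z S_R$'' does not hold in general: from a relation $\sum_s n_s s = 0$ the isometric isomorphism $\ell^{\NBAb}(S_R,R) \cong A$ only yields $n_s \cdot 1_R = 0$ in $R$, not $n_s = 0$ in $\Z$. If the canonical ring homomorphism $\Z \to R$ is not injective (e.g.\ $R = (\F_p)_{\triv}$), then $\Z S_R$ is free over the image $\Z_R$ of $\Z$ in $R$ rather than over $\Z$, and your claimed isometric isomorphism $\ell^{\NBAb}((S_R,1_{S_R}),\Z_{\triv}) \cong (\Z S_R)_{\triv}$ fails: the natural map is surjective with nontrivial kernel. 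The paper circumvents this by identifying $(\Z S_R)_{\triv}$ with $\ell^{\NBAb}(S_R,(\Z_R)_{\triv})$ and applying Proposition \ref{isolated free module} (ii) over $(\Z_R)_{\triv}$; since $\Z S_R$ is already a $\Z_R$-module and $\Z \to \Z_R$ is surjective, $\wh{\otimes}_{\Z_{\triv}} R$ and $\wh{\otimes}_{(\Z_R)_{\triv}} R$ agree on it, so the stated conclusion is unaffected. With that substitution the rest of your argument for (ii) goes through unchanged.
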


\begin{proof}
The assertion (i) immediately follows from Proposition \ref{isolated free module} (ii), because the natural morphism $\ell^{\NBAb}((S,1_S),\Z_{\triv}) \to A_{\Z}$ is an isometric isomorphism (cf.\ Example \ref{trivially normed set} (ii)). The assertion (ii) also immediately follows from Proposition \ref{isolated free module} (ii), because the natural morphism $\ell^{\NBAb}(S_R,(\Z_R)_{\triv}) \to (\Z S_R)_{\triv}$ is an isometric isomorphism, where $\Z_R$ denote the set theoretic image of the canonical ring homomorphism $\Z \to R$.
\end{proof}

As an immediate consequence of Corollary \ref{existence of Z-form} and Proposition \ref{construction of orthonormal Schauder basis} (i), we obtain a partial generalisation of Proposition \ref{Cfin is free}.

\begin{crl}
\label{construction of orthonormal Schauder basis for C(zeta(X),k)}
Let $S$ be a $\Z$-linear basis of $\rCfin(X,\Z_{\triv})$, and $R$ a monoid object of $\NBAb$ with submultiplicative norm isolated at $0$ or a complete non-Archimedean valued ring (resp.\ field). Then $\set{f_R}{f \in S}$ is an $\NBAb$-orthonormal Schauder $R$-linear basis of $\rC(\zeta(X),R)$ whose multiplication table is a matrix with coefficients in $\Z$, where $f_R$ denotes the composite of the unique extension $\zeta(X) \to \Z$ of $f$ (cf.\ Proposition \ref{extension property}) and the canonical ring homomorphism $\Z \to R$.
\end{crl}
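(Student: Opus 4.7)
The plan is to recognise the statement as a direct combination of Corollary~\ref{existence of Z-form} and Proposition~\ref{construction of orthonormal Schauder basis}~(i), and then only verify that the image of a basis element $f \in S$ under the relevant isomorphism is exactly the function $f_R$ described.

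First I would observe that $\n{\cdot}_{\rCfin(X,\Z_{\triv})}$ is the trivial norm, since $\Z_{\triv}$ carries the trivial norm and the norm on $\rCfin(X,\Z_{\triv})$ is the supremum norm. Therefore $\rCfin(X,\Z_{\triv})$ is an object of $\Alg_{\NBAb}(\Z_{\triv})$ whose norm is trivial, and any $\Z$-linear basis of it is automatically an $\NBAb$-orthonormal Schauder $\Z_{\triv}$-linear basis. Next I would invoke Corollary~\ref{existence of Z-form}, which supplies the isometric isomorphism
\begin{eqnarray*}
B^{(\NBAb,X,\Z_{\triv})}_R \colon \rCfin(X,\Z_{\triv}) \wh{\otimes}_{\Z_{\triv}} R \stackrel{\cong}{\to} \rC(\zeta(X),R)
\end{eqnarray*}
in $\Alg_{\NBAb}(R)$, exhibiting $\rCfin(X,\Z_{\triv})$ as an $(\NBAb,\Z_{\triv})$-form of $\rC(\zeta(X),R)$ with trivial norm, for either class of base rings $R$ considered in the statement.

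With these two inputs in place, Proposition~\ref{construction of orthonormal Schauder basis}~(i) applied to $A = \rC(\zeta(X),R)$, $A_{\Z} = \rCfin(X,\Z_{\triv})$, and $\varphi_A = B^{(\NBAb,X,\Z_{\triv})}_R$ immediately yields that $B^{(\NBAb,X,\Z_{\triv})}_R(S)$ is an $\NBAb$-orthonormal Schauder $R$-linear basis of $\rC(\zeta(X),R)$ whose multiplication table has coefficients in $\Z$ (the latter because multiplication of the elements $f \otimes 1$ with $f \in S$ happens inside the $\Z_{\triv}$-form, where by assumption $S$ is a $\Z$-linear basis closed under multiplication up to $\Z$-coefficients).

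The only remaining point is to identify $B^{(\NBAb,X,\Z_{\triv})}_R(f \otimes 1)$ with the function $f_R$ defined in the statement. Unwinding the definition of $B^{(\NBAb,X,\Z_{\triv})}_R$, one sends $f \otimes 1$ through the inverse of the restriction map, i.e.\ to $\tl{f} \otimes 1 \in \rC(\zeta(X),\Z_{\triv}) \wh{\otimes}_{\Z_{\triv}} R$ where $\tl{f}$ is the unique extension of $f$ given by Proposition~\ref{extension property}, and then through the natural morphism $\rC(\zeta(X),\Z_{\triv}) \wh{\otimes}_{\Z_{\triv}} R \to \rC(\zeta(X),R)$, which sends $\tl{f} \otimes 1$ to the composite of $\tl{f}$ with the canonical ring homomorphism $\Z \to R$. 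This composite is precisely $f_R$, so there is no obstacle beyond this unwinding; the whole argument is a direct assembly of the previously established results.
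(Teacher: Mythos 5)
Your proposal is correct and follows exactly the paper's route: the paper states this corollary as an immediate consequence of Corollary \ref{existence of Z-form} and Proposition \ref{construction of orthonormal Schauder basis} (i), which is precisely the assembly you carry out. Your additional unwinding of $B^{(\NBAb,X,\Z_{\triv})}_R(f \otimes 1)$ to identify it with $f_R$ is a correct elaboration of a step the paper leaves implicit.
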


By Proposition \ref{Cfin is closed} (ii) and Corollary \ref{construction of orthonormal Schauder basis for C(zeta(X),k)}, we obtain the following:

\begin{crl}
\label{construction of orthonormal Schauder basis for C(X,k)}
Suppose $X \in \TDCH$. Let $S$ be a $\Z$-linear basis of $\rC(X,\Z_{\triv})$. Let $R$ be a monoid object $R$ of $\NBAb$ with submultiplicative norm isolated at $0$ or a complete non-Archimedean valued ring (resp.\ field). Then $\set{f_R}{f \in S}$ is an $\NBAb$-orthonormal Schauder $R$-linear basis of $\rC(X,R)$ whose multiplication table is a matrix with coefficients in $\Z$, where $f_R$ denotes the composite of $f$ and the canonical ring homomorphism $\Z \to R$.
\end{crl}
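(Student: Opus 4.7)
The plan is to reduce the statement directly to Corollary \ref{construction of orthonormal Schauder basis for C(zeta(X),k)} by exploiting the fact that a totally disconnected compact Hausdorff topological space agrees canonically with its own Banaschewski compactification.

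First I would observe that since $\zeta$ is the left adjoint to the fully faithful inclusion $F_{\TDCH}^{\Tpl} \colon \TDCH \hookrightarrow \Tpl$, the unit $\iota_{\zeta,X} \colon X \to \zeta(X)$ is an isomorphism in $\Tpl$ for every $X \in \TDCH$. Consequently, pre-composition by $\iota_{\zeta,X}$ induces a natural isometric isomorphism $\rC(\zeta(X),R) \xrightarrow{\cong} \rC(X,R)$ in $\Alg_{\NBAb}(R)$, and likewise with $R$ replaced by $\Z_{\triv}$. Under this identification, for any $f \colon X \to \Z$ continuous, its unique extension $\zeta(X) \to \Z$ provided by Proposition \ref{extension property} coincides with $f$ itself.

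Next, since $X$ is compact and $\Z_{\triv}$ is isolated at $0$ (cf.\ Example \ref{example of Banach ring isolated at 0} (iii)), Proposition \ref{Cfin is closed} (ii) gives $\rCfin(X,\Z_{\triv}) = \rC(X,\Z_{\triv})$. Hence the given $\Z$-linear basis $S$ of $\rC(X,\Z_{\triv})$ is automatically a $\Z$-linear basis of $\rCfin(X,\Z_{\triv})$ in the sense required by Corollary \ref{construction of orthonormal Schauder basis for C(zeta(X),k)}.

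Applying Corollary \ref{construction of orthonormal Schauder basis for C(zeta(X),k)} then yields that $\set{f_R}{f \in S}$, where now $f_R$ denotes the composite of the (unique) extension $\zeta(X) \to \Z$ of $f$ with $\Z \to R$, is an $\NBAb$-orthonormal Schauder $R$-linear basis of $\rC(\zeta(X),R)$ whose multiplication table has coefficients in $\Z$. Transporting this basis along the isometric isomorphism $\rC(\zeta(X),R) \cong \rC(X,R)$ from the first step—and using that the extension of $f$ restricts back to $f$ on $X$—gives precisely the desired basis $\set{f_R}{f \in S}$ of $\rC(X,R)$, and the multiplication table is preserved by the algebra isomorphism. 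There is essentially no obstacle here beyond correctly matching the definition of $f_R$ in the two statements, which is immediate once $X$ and $\zeta(X)$ are identified.
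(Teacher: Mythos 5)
Your proposal is correct and follows essentially the same route as the paper, whose proof consists precisely of invoking Proposition \ref{Cfin is closed} (ii) to identify $\rC(X,\Z_{\triv})$ with $\rCfin(X,\Z_{\triv})$ and then applying Corollary \ref{construction of orthonormal Schauder basis for C(zeta(X),k)}. Your additional remarks on $\iota_{\zeta,X}$ being an isomorphism for $X \in \TDCH$ and on matching the two definitions of $f_R$ simply make explicit what the paper leaves implicit.
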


One of the benefits of choosing a $\Z$-form is that analysis of $A$ is reduced to a combinatorial computation based on a $\Z$-linear basis of $A_{\Z}$. Indeed, Corollary \ref{construction of orthonormal Schauder basis for C(X,k)} ensures that non-Archimedean functional analysis on $X$ is completely reduced to the multiplication table for a collection $S$ of functions $X \to \Z_{\triv}$ if $X \in \TDCH$.

\vspace{0.1in}
By Proposition \ref{Cfin is free}, we can choose $S$ to be a set of idempotents, in which case the multiplication table will be simpler, as the product just corresponds to the intersection of the corresponding clopen subsets. We say that $S$ is a {\it generalised van der Put basis of $\rCfin(X,\Z_{\triv})$} if it is a $\Z$-linear basis of $\rCfin(X,\Z_{\triv})$ such that $e_0 e_1 \in \ens{e_0,e_1,0}$ for any $(e_0,e_1) \in S^2$. Since $\rCfin(X,\Z_{\triv})$ is reduced, a generalised van der Put basis of $\rCfin(X,\Z_{\triv})$ consists of idempotents, and the multiplication table for a generalised van der Put basis is even simpler than a generic multiplication table by intersections of clopen subsets, because it is reduced to the monoid structure of $S \cup \ens{0}$ or the partial ordering of clopen subsets with respect to the inclusion. We give a criterion for the existence of a generalised van der Put basis.

\begin{prp}
\label{existence of generalised van der Put basis}
If $X$ is ultrametrisable, then $\rCfin(X,\Z_{\triv})$ admits a generalised van der Put basis.
\end{prp}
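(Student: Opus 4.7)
The plan is to exploit the tree structure of clopen balls coming from the ultrametric. Fix an ultrametric $d$ on $X$ compatible with the topology. After composing $d$ with an appropriate non-decreasing step function (which preserves the ultrametric property, as verified earlier in the paper when normalising value sets of ultrametrics), we may assume $d$ takes values in $\{0\} \cup \{2^{-n} : n \in \Z\}$. For each $n \in \Z$, the relation $d(x,y) \leq 2^{-n}$ is an equivalence relation on $X$ by the strong triangle inequality, and its classes form a partition $\mathcal{P}_n$ of $X$ into clopen balls; the $\mathcal{P}_{n+1}$ refine the $\mathcal{P}_n$. The set $\mathcal{T} = \bigcup_n \mathcal{P}_n$ of distinct clopen balls is laminar and carries a natural forest structure under reverse inclusion, with root $X$ in the bounded case.

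Next I would construct a laminar family $\mathcal{F}$ of clopen subsets recursively on this forest. Include the root(s) in $\mathcal{F}$. For each internal node $U \in \mathcal{T}$ with finitely many children $C(U) = \{V_1, \ldots, V_k\}$, pick a distinguished child $V_1 = \delta(U)$ and include $V_2, \ldots, V_k$ in $\mathcal{F}$; then $1_{\delta(U)} = 1_U - \sum_{i \geq 2} 1_{V_i}$ recovers the distinguished child as a finite $\Z$-linear combination. For $U$ with infinite $C(U)$, well-order $C(U) = \{V_\alpha\}_{\alpha < \lambda_U}$ and additionally include in $\mathcal{F}$ the clopen tail unions $T_\alpha \coloneqq \bigsqcup_{\beta \geq \alpha} V_\beta$ for $\alpha \geq 1$, together with the non-distinguished children $V_\alpha$ for $\alpha \geq 1$; then $1_{V_\alpha} = 1_{T_\alpha} - 1_{T_{\alpha+1}}$ and $1_{\delta(U)} = 1_U - 1_{T_1}$, both finite combinations in $\mathcal{F}$. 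Laminarity of $\mathcal{F}$ is then immediate: balls are nested-or-disjoint by the ultrametric property, the tails $T_\alpha$ form a descending chain inside their parent $U$, and each $V_\beta$ satisfies $V_\beta \subset T_\alpha$ if $\beta \geq \alpha$ and $V_\beta \cap T_\alpha = \emptyset$ otherwise.

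The proof concludes by verifying that $\{1_U : U \in \mathcal{F}\}$ is a $\Z$-basis of $\rCfin(X, \Z_{\triv})$. Generation proceeds by induction on tree depth combined with Proposition \ref{idempotent generates C(X,R)}(ii), which reduces every $f \in \rCfin(X, \Z_{\triv})$ to a finite $\Z$-linear combination of characteristic functions of clopen sets, each of which is further decomposed along the ball tree (with the tails absorbing any infinite-branching part). Linear independence of the $\{1_U\}$ is verified by evaluating a purported null combination at points chosen in specific branches of the tree to isolate and cancel coefficients one at a time, exploiting the tree ordering on $\mathcal{F}$.

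The main obstacle I expect is establishing generation for clopen subsets of $X$ that are not finite unions of balls, which occurs precisely when $X$ is non-compact. The tail construction is the essential tool here, and it requires a careful bookkeeping so that infinite disjoint unions of children of any node can always be cut off at some finite ordinal after which the remainder lies in a single tail $T_\alpha$ already in $\mathcal{F}$. When $X$ is compact, the forest of balls is finitely branching and the standard van der Put recursion suffices; the technical work is entirely in the infinite-branching case.
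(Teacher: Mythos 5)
There is a genuine gap: the family $\cF$ you construct is too small to generate $\rCfin(X,\Z_{\triv})$ whenever some node of the ball forest has infinitely many children, and the tail construction does not repair this. Take $X = \N$ with the discrete $\ens{0,1}$-valued ultrametric. The forest is the single root $X$ whose children are the singletons, so your $\cF$ consists of $X$, the tails $T_\alpha$, and the non-distinguished singletons. Any finite $\Z$-linear combination of the corresponding characteristic functions is, away from finitely many points, constant on each of finitely many order-intervals of the chosen well-order; since a countable well-order has only countably many initial segments, the span of $\cF$ has countable rank. But $\rCfin(\N,\Z_{\triv}) = \ell^{\infty}(\N,\Z)$ is free of rank $2^{\aleph_0}$ by N\"obeling's theorem, so no choice of well-orders on the sets of children can make your family a basis; concretely, for the standard well-order every element of the span is eventually constant, so $1_{2\N}$ is not in it. The claim that an arbitrary clopen set ``can always be cut off at some finite ordinal after which the remainder lies in a single tail'' is exactly what fails: $W \cap T_\alpha$ need not be a tail for any $\alpha$. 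Your recursion is the classical van der Put construction and does work when every node is finitely branching (e.g.\ $X$ compact), because then every clopen subset is a finite union of balls; the infinitely branching case is precisely where it breaks down, and it is the whole difficulty of the non-compact statement.

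For comparison, the paper does not construct the basis explicitly: it applies the \emph{proof} of N\"obeling's theorem in the form of \cite{Sch19} Theorem 5.4 to the embedding $X \hookrightarrow \ens{0,1}^{X \times (0,\infty)}$ given by the characteristic functions of open balls. That argument well-orders the (typically uncountable) family of finite products of these idempotents and extracts a basis by a transfinite greedy selection; the nontrivial content --- that the selected subfamily spans --- is supplied by N\"obeling's argument and is what your explicit tails cannot reproduce. The ultrametric hypothesis enters only to guarantee that a finite product of ball indicators is again a ball indicator or $0$ (balls are nested or disjoint), which is why the resulting basis satisfies $e_0 e_1 \in \ens{e_0,e_1,0}$. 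To salvage your approach you would need, at each infinitely branching node, a full N\"obeling-type basis of the bounded $\Z$-valued functions on the set of children, not merely the tails.
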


\begin{proof}
Fix an ultrametric $d_X$ on $X$ compatible with its topology. For each $(x,r) \in X \times (0,\infty)$, we denote by $\chi_{x,r}$ the characteristic function $X \to \ens{0,1}$ of $\set{y \in X}{d_X(x,y) < r} \in \CO(X)$. The assertion immediately follows from the proof of \cite{Sch19} Theorem 5.4 applied to the embedding $X \hookrightarrow \ens{0,1}^{X \times (0,\infty)}$ given by $(\chi_{x,r})_{(x,r) \in X \times (0,\infty)}$.
\end{proof}

A typical example of a generalised van der Put basis is the van der Put basis of $\rC(\Zp,\Z_{\triv})$ (cf.\ p.\ 263 \cite{Mah73} Chapter 16 \S 9 Theorem 4), as the name indicates. It is the set of the idempotents corresponding to clopen subsets of $\Zp$ of the form
\begin{eqnarray*}
\set{x \in \Zp}{\v{x-n}_p < n^{-1}}
\end{eqnarray*}
with $n \in \N \setminus \ens{0}$, where the $p$-adic valuation on $\Zp$ is normalised by $\v{p}_p = p^{-1}$. There are many classical results on $\rC(\Zp,\Qp)$ based on the presentation by the van der Put basis (cf.\ \cite{Sch84} \S 62). We now show that $\rC(\Zp,\Qp)$ admits two non-isomorphic $\Z$-forms, providing the example mentioned in Example \ref{examples Z-forms} (iii).

\begin{exm}
\label{Mahler basis}
The set of binomial polynomials is an $\NBAb$-orthonormal $\Qp$-linear basis of $\rC(\Zp,\Qp)$, by \cite{Mah58} Theorem 1, called the {\it Mahler basis}. This basis generates a $\Z$-submodule $\Int(\Z_{\triv}) \subset \rC(\Zp,\Qp)$ which is a subring and a $\Z$-form of $\rC(\Zp,\Qp)$ over $\Qp$, that is isomorphic to the so-called \emph{ring of integrally valued polynomials}. It is easy to check that $\Int(\Z_{\triv})$ is not contained in $\rCfin(\Zp,\Z)$, but is contained in
\begin{eqnarray*}
\set{f(T) \in \Q[T]}{\forall x \in \Z, f(x) \in \Z}
\end{eqnarray*}
under the identification of a polynomial and the corresponding polynomial function. Although the Mahler basis does not consist of idempotent elements, it has plenty of good combinatorial properties (cf.\ \cite{Mah73} \S 9 -- \S 10). For example, the Mahler basis is the dual basis of the topological basis $(([1] - [0])^n)_{n \in \N} \in \Zp \dbrack{\Zp}$ associated to the topological basis $(T^n)_{n \in \N} \in \Zp \dbrack{T} \cong \rH^0(\Gm,\cO_{\Gm})$ through the Iwasawa isomorphism $\Zp \dbrack{\Zp} \cong \Zp \dbrack{T}$, where $\Gm$ denotes the commutative formal group scheme over $\Spf(\Zp)$ given as the formal completion of the multiplicative group $\Spec(\Zp[T,(1+T)^{-1}])$ at the unit. Indeed, let $(n,i) \in \N^2$. If $i < n$, then we have $\sum_{j=0}^{i} (-1)^{n-j} \binom{i}{j} \binom{j}{n} = 0$. If $i \geq n$, then we have
\begin{eqnarray*}
& & \sum_{j=0}^{i} (-1)^{n-j} \binom{i}{j} \binom{j}{n} = \sum_{j=n}^{i} (-1)^{n-j} \binom{i}{j} \binom{j}{n} = \sum_{j=n}^{i} (-1)^{n-j} \frac{i!}{(i-j)! (j-n)! n!} \\
& = & \binom{i}{n} \sum_{j=n}^{i} (-1)^{n-j} \binom{i-n}{j-n} = \binom{i}{n} \sum_{j=0}^{i-n} (-1)^j \binom{i-n}{j} = \binom{i}{n} (1-1)^{i-n} \\
& = & 
\left\{
  \begin{array}{ll}
    1 & (i = n) \\
    0 & (i > n)
  \end{array}
\right..
\end{eqnarray*}
This implies that the canonical pairing $\rC(\Zp,\Qp) \times \Zp \dbrack{\Zp} \to \Qp$ sends the pair of $\binom{x}{n} \in \rC(\Zp,\Qp)$ and $([1]-[0])^i \in \Zp \dbrack{\Zp}$ to $1$ if $i = n$ and to $0$ if $i \neq n$. Since the algebra structure of the topological Hopf $\Zp$-algebra $\Zp \dbrack{\Zp}$ corresponds to the coalgebra structure of the Banach Hopf $\Qp$-algebra $\rC(\Zp,\Qp)$, the Mahler basis also possesses information of the comultiplication helpful in the study of Banach comodules over $\rC(\Zp,\Qp)$, which correspond to $p$-adic representations of $\Gm$. Further observations of an $\NBAb$-orthonormal Schauder $\Qp$-linear basis of the dual non-commutative Banach Hopf $\Qp$-algebra of the non-cocommutative topological Hopf $\Zp$-algebra associated to a non-commutative $p$-adic Lie group of higher dimension are given in \cite{Mih21} \S 4.1. Concerning this work, the interesting observation is that the non-Archimedean Banach rings $\Int(\Z_{\triv})$ and $\rC(\Zp,\Z_{\triv})$ are both $(\NBAb,\Z_{\triv})$-forms of $\rC(\Zp,\Q_p)$ but they are non-isomorphic as Banach rings, and the interplay between these two different presentations of $\rC(\Zp,\Q_p)$ is an important theme of its study. To appreciate the difference between the algebras $\Int(\Z_{\triv})$ and $\rC(\Zp,\Z_{\triv})$ one can notice that they behave very differently for the base change functors. Indeed, consider a prime $\ell \ne p$, then $\Int(\Z_{\triv}) \wh{\otimes}_{\Z_{\triv}} \Q_\ell \cong \rC(\Z_\ell,\Q_\ell)$ and $\rC(\Zp,\Z_{\triv}) \wh{\otimes}_{\Z_{\triv}} \Q_\ell \cong \rC(\Zp,\Q_\ell)$.
\end{exm}

\vspace{0.1in}
Similarly, it is reasonable to expect applications of the simpler $\Z$-form associated to a generalised van der Put basis. As the van der Put basis can be used to characterise Lipschitz functions (cf.\ \cite{Sch84} Theorem 63.2), $\rC^1$-functions with zero derivatives (cf.\ \cite{Sch84} Theorem 63.3), functions with higher differentiabilities (cf.\ \cite{Sch84} Exercise 63.D), and monotonous functions (cf.\ \cite{Sch84} Exercise 63.E), one can formally generalise those notions of smoothness to notions for a bounded continuous function $X \to \Qp$, which can be identified with a continuous function $\zeta(X) \to \Qp$ by the universality of Banaschewski compactification, with respect to a fixed generalised van der Put basis $S$ of $\rCfin(X,\Z_{\triv})$. Similarly, one can imitate the computation of the integration using the van der Put basis (cf.\ \cite{Sch84} Exercise 62.E (ii)) to formally define an integration of a bounded continuous function $X \to \Qp$ with respect to $S$ and a fixed assignment $S \to \Qp$ analogous to a $p$-adic measure.

\vspace{0.1in}
A subring $A \subset \rC(\Zp,\Qp)$ is said to {\it satisfy the $p$-adic Weierstrass approximation theorem} if $A \cap \Qp[T]$ is dense in $A$ with respect to the restriction of $\n{\cdot}_{\rC(\Zp,\Qp)}$. By the relations $\Int(\Z_{\triv}) \subset \Q[T]$ and $\rC(\Zp,\Z_{\triv}) \cap \Qp[T] = \Z$, it follows that $\Int(\Z_{\triv})$ satisfies the $p$-adic Weierstrass approximation theorem, but $\rC(\Zp,\Z_{\triv})$ does not satisfy the $p$-adic Weierstrass approximation theorem. Then, a natural question is: what approximation theorem should we expect for $\rC(\Zp,\Z_{\triv})$ (or more generally, for $\rCfin(X,\Z_{\triv})$) to hold? One natural candidate is the Stone--Weierstrass theorem.

\vspace{0.1in}
A subring $A \subset \rCbd(X,R)$ is said to {\it separate points of $X$} if for any $(x_0,x_1) \in X^2$ with $x_0 \neq x_1$, there exists an $f \in A$ such that $f(x_0) \neq f(x_1)$. An $R$-subalgebra $A \subset \rCbd(X,R)$ is said to {\it satisfy the Stone--Weierstrass theorem over $R$} if for any $R$-subalgebra $B \subset A$ separating points of $X$, $\set{f \in A}{R f \cap B \neq \ens{0}}$ is dense in $A$. We note that requiring $B$ itself to be dense in $A$ is not sufficient, as $\Z + p \rC(\Zp,\Z_{\triv})$ and $\sum_{n \in \N} \Z p^n 1_{\Zp,\Z_{\triv},n + p^n \Zp}$ are subrings of $\rC(\Zp,\Z_{\triv})$ separating points of $\Zp$ but are not dense in $\rC(\Zp,\Z_{\triv})$. We say that $X$ is zero dimensional if $\CO(X)$ generates the topology of $X$. We show an extension of \cite{Mih14} Lemma 4.17.

\begin{prp}
\label{non-Archimedean completely Hausdorff}
Suppose that $R \neq \ens{0}$, and $X$ is zero dimensional and $T_0$.
\begin{itemize}
\item[(i)] Every dense subring $A \subset \rCbd(X,R)$ separates points of $X$.
\item[(ii)] For any $\Z$-form $(A,\varphi)$ of $\rCbd(X,R)$ over $\Z$, the set $\set{\varphi(f \otimes 1)}{f \in A}$ is a subring of $\rCbd(X,R)$ separating points of $X$.
\end{itemize}
\end{prp}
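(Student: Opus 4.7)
My plan is to prove (i) by a direct approximation argument and then to deduce (ii) from essentially the same argument, using continuity and $R$-linearity of evaluation maps.

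For (i), fix $x_0 \neq x_1$ in $X$. The $T_0$ axiom supplies an open set containing exactly one of the two points, and since $X$ is zero dimensional I can shrink it to a clopen $V \subset X$ with, say, $x_0 \in V$ and $x_1 \notin V$. Then $1_{X,R,V} \in \rCbd(X,R)$ satisfies $1_{X,R,V}(x_0) - 1_{X,R,V}(x_1) = 1 \in R$. Since $R \neq \ens{0}$ and the norm of any Banach Abelian group is separated, $\n{1}_R > 0$. By density of $A$, I approximate $1_{X,R,V}$ within $\n{1}_R/3$ in supremum norm by some $f \in A$; then $\n{f(x_0) - f(x_1)}_R \geq \n{1}_R - 2 \n{1}_R/3 > 0$, so $f(x_0) \neq f(x_1)$.

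For (ii), write $(A, \varphi)$ for the given $\Z$-form, so that $\varphi \colon A \wh{\otimes}_\Z R \to \rCbd(X,R)$ is an isomorphism in $\Alg_\cC(R)$, where $\Z$ stands for $\Z_\infty$ or $\Z_{\triv}$ depending on $\cC$. The map $A \to \rCbd(X,R)$ given by $f \mapsto \varphi(f \otimes 1)$ is a composition of ring homomorphisms, hence a ring homomorphism, so its image $B \coloneqq \set{\varphi(f \otimes 1)}{f \in A}$ is a subring. To see that $B$ separates points, fix $x_0 \neq x_1$ in $X$ and consider the evaluation maps $\ev_{x_i} \colon \rCbd(X,R) \to R$, which are continuous (in fact contracting) $R$-algebra homomorphisms. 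If $\ev_{x_0}$ and $\ev_{x_1}$ coincided on $B$, then by $R$-linearity they would coincide on the $R$-span $R \cdot B = \varphi(A \otimes_\Z R)$; but $A \otimes_\Z R$ is dense in $A \wh{\otimes}_\Z R$ by construction of the completed tensor product, and $\varphi$ is a homeomorphism, so $R \cdot B$ is dense in $\rCbd(X,R)$. Continuity would then force $\ev_{x_0} = \ev_{x_1}$ on the whole algebra, contradicting the clopen separator $1_{X,R,V}$ found in (i).

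There is no serious obstacle: both parts reduce to standard approximation in a normed space together with the definition of the completed tensor product. The only points worth stressing are that one needs $\n{1}_R > 0$ (automatic from $R \neq \ens{0}$ since the norm on a Banach ring is separated), and that $B$ is \emph{not} itself an $R$-submodule of $\rCbd(X,R)$, so the density one must invoke in (ii) is that of the $R$-span $R \cdot B$ rather than of $B$ itself; this is precisely why $R$-linearity of the evaluation maps plays a key role.
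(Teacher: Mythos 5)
Your proposal is correct and follows essentially the same route as the paper: part (i) by producing a clopen separator from the $T_0$ and zero-dimensionality hypotheses and approximating its characteristic function by an element of the dense subring, and part (ii) by observing that the evaluation maps are $R$-algebra homomorphisms, so failure of separation would propagate from $\set{\varphi(f \otimes 1)}{f \in A}$ to the dense $R$-subalgebra $\varphi(A \otimes_{\Z} R)$, contradicting (i). Your choice of tolerance $\n{1}_R/3$ in (i) is in fact slightly more careful than the paper's $2^{-1}$, which tacitly uses $\n{1}_R = 1$.
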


\begin{proof}
We show the assertion (i). Let $(x_0,x_1) \in X^2$ with $x_0 \neq x_1$. Since $X$ is $T_0$, there exists an open subset $V \subset X$ such that $V \cap \ens{x_0,x_1}$ is a singleton. Since $X$ is zero-dimensional, there exists a $U \in \CO(X)$ such that $V \cap \ens{x_0,x_1} \subset U \subset V$. Then, $1_{X,R,U} \in \rCbd(X,R)$ separates $x_0$ and $x_1$ by $R \neq \ens{0}$. Since $A$ is dense in $\rCbd(X,R)$, there exists an $f \in A$ such that $\n{1_{X,R,U} - f}_{\rCbd(X,R)} < 2^{-1}$. We have $\n{f(x_0) - f(x_1)}_R > \n{1_{X,R,U}(x_0) - 1_{X,R,U}(x_1)} - 2^{-1} \times 2 = 0$ and hence $f(x_0) \neq f(x_1)$.

\vspace{0.1in}
We show the assertion (ii). Let $(x_0,x_1) \in X^2$ with $x_0 \neq x_1$. If $\varphi(f \otimes 1)(x_0) = \varphi(f \otimes 1)(x_1)$ for any $f \in A$, then $f(x_0) = f(x_1)$ for any $f \in A_R$, where $A_R \subset \rCbd(X,R)$ is the $R$-subalgebra generated by $\set{\varphi(f \otimes 1)}{f \in A}$. This contradicts the assertion (i), because $A_R$ is dense in $\rCbd(X,R)$.
\end{proof}

We note that every zero dimensional $T_0$ space is Hausdorff, as the proof of Proposition \ref{non-Archimedean completely Hausdorff} shows. We recall that \Gelfand's theory ensures that if $X$ is completely regular and Hausdorff, then closed $\R$-subalgebras of $\rCbd(X,\R)$ separating points of $X$ precisely correspond to equivalence classes of Hausdorff compactifications of $X$, and its non-Archimedean counterpart (cf.\ \cite{Mih14} Theorem 4.19) ensures that if $X$ is zero-dimensional and Hausdorff, then closed $k$-subalgebras of $\rCbd(X,k)$ separating points of $X$ precisely correspond to equivalence classes of totally disconnected Hausdorff compactifications of $X$ for any local field $k$. Therefore, when we expect the Stone--Weierstrass theorem to hold, we need to assume $X \in \CH$.

\begin{thm}
\label{non-Archimedean Stone--Weierstrass}
If $X \in \CH$, then $\rC(X,R)$ satisfies the Stone--Weierstrass theorem over $R$ in the following cases:
\begin{itemize}
\item[(i)] There exists an isomorphism $R \to (\R,\v{\cdot}_{\R}^{\epsilon})$ of monoid objects in $\BAb$ for some $\epsilon \in [0,\infty)$.
\item[(ii)] There exists an isomorphism $R \to k$ of monoid objects in $\BAb$ for some complete non-Archimedean valued field $k$.
\item[(iii)] There exists an isomorphism $R \to \Z_{\infty}$ or $R \to \Z_{\triv}$ of monoid objects in $\BAb$.
\item[(iv)] There exists a total order $\leq$ on the underlying ring of $R$ such that $(R,\leq)$ is an ordered ring, and $R$ is isolated at $0$.
\end{itemize}
\end{thm}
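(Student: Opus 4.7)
The plan is to split the four cases into two strategies. For case~(i) with $\epsilon > 0$ and for case~(ii) when $R$ is a non-trivially valued non-Archimedean field, I would appeal directly to the classical real Stone--Weierstrass theorem and to Kaplansky's non-Archimedean analogue respectively. Since $B$ is an $R$-subalgebra of $\rC(X,R)$ containing $1$ and separating points, each theorem gives density of $B$ in $\rC(X,R)$ (in the non-Archimedean case after first replacing $X$ by $\pi_0(X) \in \TDCH$, using that any continuous map to the totally disconnected $R$ factors through $\pi_0(X)$), hence density of $B$ in $A$; because the sup-norm topology is non-discrete in these subcases, $B \setminus \ens{0}$ is still dense in $A$, and it is contained in $\set{f \in A}{Rf \cap B \neq \ens{0}}$.

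The remaining subcases, namely (i) with $\epsilon = 0$, (ii) with a trivially valued field, (iii), and (iv), all have $R$ isolated at $0$ and a domain (an ordered ring or a field). By Propositions \ref{isolated at 0 implies discrete} and \ref{Cfin is closed}\,(ii), $\rC(X,R) = \rCfin(X,R)$ and its sup-norm topology is discrete; by Proposition \ref{extension property} I may replace $X$ with $\zeta(X) \in \TDCH$; and by Proposition \ref{idempotent generates C(X,R)}\,(ii) every $f \in A$ decomposes as $\sum_{i=1}^{n} r_i 1_{X,R,U_i}$ over a finite clopen partition. In this setting the density condition reduces to the purely algebraic statement that for every $f \in A \setminus \ens{0}$ there exists $r \in R \setminus \ens{0}$ with $rf \in B$.

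The heart of the argument is the claim that for every $U \in \CO(X)$ there exists $r_U \in R \setminus \ens{0}$ with $r_U \cdot 1_{X,R,U} \in B$. Granting this, setting $r \coloneqq \prod_i r_{U_i}$ in the decomposition of $f$ above gives $r \cdot 1_{X,R,U_i} = \bigl( \prod_{j \neq i} r_{U_j} \bigr) \bigl( r_{U_i} 1_{X,R,U_i} \bigr) \in B$ for each $i$, hence $rf = \sum_i r_i (r \cdot 1_{X,R,U_i}) \in B$. To construct the bump functions needed for the claim, I use point separation: for each $x \in U$ and $y \in X \setminus U$, the element $h_{x,y} \coloneqq g_{x,y} - g_{x,y}(y)$ lies in $B$, vanishes at $y$, and is nonzero at $x$. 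Since the clopen sets $h_{x,y}^{-1}(0)$ cover the compact $X \setminus U$ as $y$ varies, a finite subcover yields $f_x \coloneqq \prod_{j} h_{x,y_j} \in B$ with $f_x|_{X \setminus U} = 0$ and $f_x(x) \neq 0$, using that $R$ is a domain.

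The main obstacle, where the subcases finally diverge, is combining the bumps $f_x$ ($x \in U$) over a finite clopen subcover of $U$ into a single $g \in B$ with $g|_{X \setminus U} = 0$ and $g$ nowhere-vanishing on $U$. In the ordered subcases (i)\,$\epsilon = 0$, (iii), (iv) I would set $g \coloneqq \sum_{k} f_{x_k}^2$, whose positivity in the ordered ring precludes cancellation. Writing $g = \sum_{i} c_i 1_{X,R,W_i}$ for the resulting finite clopen refinement of $U$ with $c_i \in R \setminus \ens{0}$, the polynomial $q_i(t) \coloneqq t \prod_{j \neq i}(t - c_j)$ gives $q_i(g) = \bigl( c_i \prod_{j \neq i}(c_i - c_j) \bigr) \cdot 1_{X,R,W_i} \in B$ with nonzero scalar, and a final common-multiple aggregation (valid since $R$ is a domain) produces $r_U$. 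In the field subcase of (ii) with trivially valued $R$, one cannot assume an order, but one may instead normalise $\tl{f}_{x_k} \coloneqq f_{x_k}(x_k)^{-1} f_{x_k}$ and exploit the identity $1 - \prod_{k}(1 - \tl{f}_{x_k}) = 1_{X,R,U}$ to obtain the claim directly with $r_U = 1$.
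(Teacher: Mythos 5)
Your overall strategy is sound, and for the main case (iv) it is genuinely different from the paper's. The paper works from the \emph{outside} of $U$: for each $x \in X \setminus U$ it builds $f_x \in B$ vanishing at $x$ and nowhere zero on $U$ (a sum of squares over a finite family indexed by points of $U$), converts it into a scaled idempotent $e_x = a_x - \prod_{a \in f_x(X)\setminus\ens{0}}(a - f_x) = a_x 1_{X,R,X \setminus f_x^{-1}(\ens{0})}$ with $a_x > 0$, and multiplies finitely many $e_x$ to obtain $a_U 1_{X,R,U}$ directly. You work from the \emph{inside} of $U$, sum squares over a cover of $U$, and then separate the level sets of the resulting $g$ with the polynomials $q_i$; this is correct (the factor $g - c_l$ kills $W_l$, and $c_i\prod_{j\neq i}(c_i-c_j) \neq 0$ because $R$ is a domain), just slightly longer. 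Note also that the paper disposes of all of case (ii) by citing Berkovich 9.2.5, so it never needs a separate treatment of trivially valued fields; your split of (ii) into Kaplansky plus a discrete argument is legitimate but creates the extra subcase.

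The one concrete misstep is the identity $1 - \prod_{k}(1 - \tl{f}_{x_k}) = 1_{X,R,U}$ in the trivially valued subcase of (ii). Your finite subcover of $U$ only guarantees that each $z \in U$ has some $k$ with $f_{x_k}(z) \neq 0$, hence $\tl{f}_{x_k}(z) \neq 0$; it does not give $\tl{f}_{x_k}(z) = 1$, so no factor $1 - \tl{f}_{x_k}(z)$ need vanish and the product need not be $0$ on $U$ (the normalisation only arranges the value $1$ at the single point $x_k$, while $f_{x_k}$ can take other nonzero values on its support). The identity does hold if the finite subcover is taken among the level sets $W_x \coloneqq f_x^{-1}(\ens{f_x(x)})$, on which $\tl{f}_x \equiv 1$; that is a one-line fix, but it must be stated. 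Better still, your own $q_i$-trick already closes the gap uniformly: applied to a single $f_x$ it yields $b_x 1_{X,R,W_x} \in B$ with $b_x \neq 0$ and $x \in W_x \subset U$, and the inductive identity $cb\,1_{X,R,V \cup W} = b(c 1_{X,R,V}) + c(b 1_{X,R,W}) - (c 1_{X,R,V})(b 1_{X,R,W})$ aggregates a finite subcover of $U$ into $r_U 1_{X,R,U} \in B$ with $r_U \neq 0$. This uses only that $R$ is a domain isolated at $0$, so it handles the ordered cases and the trivially valued field in one stroke and makes both the sum of squares and the product identity unnecessary.
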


\begin{proof}
The claim for the condition (i) restricted to the case $\epsilon > 0$ is equivalent to the classical Stone--Weierstrass theorem, the claim for the condition (ii) is equivalent to non-Archimedean Stone--Weierstrass theorem (cf.\ \cite{Ber90} 9.2.5 Theorem), and the claims for the condition (i) restricted to the case $\epsilon = 0$ and the condition (iii) immediately follow from the claim for the condition (iv). We show the statement for the condition (iv). Let $B \subset \rC(X,R)$ be a subring separating points of $X$. It suffices to show that for any $U \in \CO(X)$, there exists an $a_U \in R \setminus \ens{0}$ such that $a_U 1_{X,R,U} \in B$. If $U = \emptyset$, then $a_U$ can be taken as $1$. Therefore, we may assume $U \neq \emptyset$.

\vspace{0.1in}
First, we show that for any $x \in X \setminus U$, there exists an $f_x \in B$ such that $x \in f_x^{-1}(\ens{0}) \subset X \setminus U$. Let $y \in U$. Since $B$ separates points of $X$, there exists an $f_{x,y} \in B$ such that $f_{x,y}(x) \neq f_{x,y}(y)$. Replacing $f_{x,y}$ by $f_{x,y} - f_{x,y}(x) \in B$, we may assume $f_{x,y}(x) = 0$. By $U \subset \bigcup_{y \in U} f_{x,y}^{-1}(R \setminus \ens{0})$, the compactness of $X$ ensures that there exists a finite subset $S_x \subset U$ such that $U \subset \bigcup_{y \in S_x} f_{x,y}^{-1}(R \setminus \ens{0})$. Put $f_x \coloneqq \sum_{y \in S_x} f_{x,y}^2 \in B$. We have $S_x \neq \emptyset$ by $U \neq \emptyset$. For any $z \in X$, we have $f_x(z) = \sum_{y \in S_x} f_{x,y}(z)^2 \geq 0$, and hence $f_x(z) = 0$ if and only if $f_{x,y}(z) = 0$ for any $y \in S_x$ because $(R,\leq)$ is an ordered ring. This implies $x \in f_x^{-1}(\ens{0}) = \bigcap_{y \in S_x} f_{x,y}^{-1}(\ens{0}) \subset X \setminus U$.

\vspace{0.1in}
Secondly, we show that for any $x \in X \setminus U$, there exists an $(a_x,e_x) \in R \times B$ such that $a_x > 0$, $x \in e_x^{-1}(\ens{0}) \subset X \setminus U$, and $e_x^2 = a_x e_x$. We have that $f_x(X)$ is a finite subset of $R$ by Proposition \ref{Cfin is closed} (ii), contains $0$ by $f_x(x) = 0$, and does not coincide with $\ens{0}$ by $U \neq \emptyset$. Put $a_x \coloneqq \prod_{a \in f_x(X) \setminus \ens{0}} a \in R$ and $e_x \coloneqq a_x - \prod_{a \in f_x(X) \setminus \ens{0}} (a - f_x) \in B$. Since every element of $f_x(X)$ is the sum of square elements, $f_x(X) \setminus \ens{0}$ consists of positive elements. This implies $a_x > 0$. We have $e_x(y) = 0$ for any $y \in f_x^{-1}(\ens{0})$ and $e_x(y) = a_x$ for any $y \in f_x^{-1}(f_x(X) \setminus \ens{0}) = X \setminus f_x^{-1}(\ens{0})$. This implies $x \in e_x^{-1}(\ens{0}) \subset X \setminus U$ and $e_x^2 = a_x e_x$.

\vspace{0.1in}
Finally, we show that there exists an $a_U \in R$ such that $a_U > 0$ and $a_U 1_{X,R,U} \in B$. By $X \setminus U = \bigcup_{x \in X \setminus U} e_x^{-1}(\ens{0})$, the compactness of $X$ ensures that there exists a finite subset $S \subset X \setminus U$ such that $X \setminus U = \bigcup_{x \in S} e_x^{-1}(\ens{0})$. Put $a_U \coloneqq \prod_{x \in S} a_x \in R$. Then, we have $a_U > 0$ and $a_U 1_{X,R,U} = \prod_{x \in S} e_x \in B$.
\end{proof}

We note that the total order $\leq$ in the condition (iv) in Theorem \ref{non-Archimedean Stone--Weierstrass} is used only to restrict the ring structure of $R$, and does not have to be compatible with $\n{\cdot}_R$ in any sense. For example, $\R_{\triv}$ and $\R_{\hyb}$ satisfy the condition (iv), although the canonical total order on $\R$ is irrelevant to $\n{\cdot}_{\triv}$ or $\n{\cdot}_{\hyb}$.

\vspace{0.3in}
\addcontentsline{toc}{section}{Acknowledgements}
\noindent {\Large \bf Acknowledgements}
\vspace{0.2in}

\noindent

\vspace{0.1in}
The first listed author thanks Kobi Kremnizer and Oren Ben-Bassat for important discussions concerning derived analytic geometry. During the preparation of this paper the first listed author has been supported by the DFG research grant BA 6560/1-1 ``\emph{Derived geometry and arithmetic}''.

\vspace{0.1in}
The second listed author thanks Fr\'ed\'eric Paugam for informing me of the interesting result in \cite{BK20} by the first listed author. We have started the joint work \cite{BM21} thanks to this information, and is continuing this joint work. The second listed author also thanks Takuma Hayashi for instructing him on elementary facts on derived categories when he was working on our preceding paper \cite{BM21}, and colleagues in universities for daily discussions. The second listed author is also thankful to his family for their deep affection. During the preparation of this paper, the second listed author has been supported by JSPS KAKENHI Grant Number 21K13763.

\addcontentsline{toc}{section}{References}

\end{document}